\let\counterwithin\relax
\tikzset{ext/.style={circle, draw,inner sep=1pt},int/.style={circle,draw,fill,inner sep=1pt},nil/.style={inner sep=1pt}}
\tikzset{exte/.style={circle, draw,inner sep=3pt},inte/.style={circle,draw,fill,inner sep=3pt}}
\tikzset{diagram/.style={matrix of math nodes, row sep=3em, column sep=2.5em, text height=1.5ex, text depth=0.25ex}}
\tikzset{diagram2/.style={matrix of math nodes, row sep=0.5em, column sep=0.5em, text height=1.5ex, text depth=0.25ex}}
\tikzset{every picture/.append style={baseline=-.65ex}}
\newcommand{\ldot}{{\:\raisebox{1.5pt}{\selectfont\text{\circle*{1.5}}}}}
\newcommand{\udot}{{\:\raisebox{4pt}{\selectfont\text{\circle*{1.5}}}}}
\newcommand{\ttt}{\text{-}}
\let\leq\leqslant
\let\geq\geqslant
\newcommand{\NN}{\mathbb N}
\newcommand{\ZZ}{\mathbb Z}
\newcommand{\QQ}{\mathbb Q}
\newcommand{\kk}{\Bbbk}
\newcommand{\bbS}{\mathbb{S}}
\newcommand{\calA}{\mathsf{OS}} 
\newcommand{\calB}{\mathcal{B}}
\newcommand{\calP}{\mathcal{P}}
\newcommand{\calQ}{{\mathcal{Q}}}
\newcommand{\calF}{{\mathcal{F}}}
\newcommand{\calM}{\mathcal M}
\newcommand{\calC}{\mathcal C}
\newcommand{\dual}{\checkmark}
\newcommand{\circles}{
\begin{tikzpicture}[scale=0.2]
\draw (0,0) circle (0.4);
\end{tikzpicture}
}
\newcommand{\odd}{k\circles}  
\newcommand{\even}{k\circles}
\newcommand\Com{\mathsf{Comm}}
\newcommand{\LL}{\mathsf{L}}
\newcommand\Lie{\mathsf{Lie}}
\newcommand{\Pois}{{\mathsf{Pois}}}
\newcommand{\ho}{\mathsf{ho}}
\newcommand{\tkd}{\mathfrak{t}}
\newcommand{\gr}{\mathsf{gr}}
\newcommand{\Top}{\mathcal{T}op}
\newcommand{\orient}{\downarrow}
\newcommand{\extv}[1]{\begin{tikzpicture}
	\node[ext] (v) (0,0) {\small{$#1$}};
	\end{tikzpicture}	}	
\newcommand{\GC}{{\mathsf{GC}}}
\newcommand{\hGCor}{{\widehat{\GC}^{\orient}}}
\newcommand{\GCorev}{\GC^{\orient,\even}}
\newcommand{\hGCorev}{{\widehat{\GC}^{\orient,\even}}}
\newcommand{\dGC}{{\mathsf{dGC}}}
\newcommand{\Graphs}{{\mathsf{Graphs}}}
\newcommand{\fGraphs}{{\mathsf{Graphs}}}
\newcommand{\pdu}{{}^*} 
\newcommand{\shufLie}{\mathsf{Shuf}{\text{-}}\mathsf{Lie^k}}
\newcommand{\kgraph}{\mathsf{Grph}^{k}}
\newcommand{\ktree}{\mathsf{Trees}^k}
\newcommand{\ICG}{\mathsf{ICG}}
\newcommand{\ICGS}[1]{{\ICG}^{k\orient}_{#1}(n)}
\newcommand{\Def}{\mathrm{Def}}
\newcommand{\BiDer}{\mathrm{BiDer}}
\DeclareRobustCommand{\rchi}{{\mathpalette\irchi\relax}}
\newcommand{\irchi}[2]{\raisebox{\depth}{$#1\chi$}}
\newcommand{\oneedge}[2]{	
	\begin{tikzpicture}[scale=0.5]
	\node[int] (v) at (0,0) {};
	\node (v1) at (0,0.3) {\small{$#1$}};
	\node[ext] (w) at (1.5,0) {\small{$#2$}};
	\draw[-triangle 60] (v) edge (w);
	\end{tikzpicture}
}
\newtheorem{theorem}[equation]{Theorem}
\newtheorem*{theorem*}{Theorem}
\newtheorem{proposition}[equation]{Proposition}
\newtheorem*{proposition*}{Proposition}
\newtheorem*{statement*}{Statement}
\newtheorem{lemma}[equation]{Lemma}
\newtheorem*{lemma*}{Lemma}
\newtheorem{corollary}[equation]{Corollary}
\newtheorem*{corollary*}{Corollary}
\newtheorem{conjecture}[equation]{Conjecture}
\newtheorem{definition}[equation]{Definition}
\newtheorem*{definition*}{Definition}
\newtheorem{remark}[equation]{Remark}
\newtheorem*{remark*}{Remark}
\newtheorem{example}[equation]{Example}
\newtheorem*{example*}{Example}
\title{Quadratic Algebras arising from Hopf operads generated by a single element}
\author{Anton Khoroshkin\thanks{
		International Laboratory of Representation Theory and Mathematical Physics, 
		National Research University Higher School of Economics, 
		20 Myasnitskaya street, Moscow 101000, Russia \& 	
		Institute for Theoretical and Experimental Physics, Moscow 117259, Russia; } 
	}
\begin{document}
\maketitle
	
\begin{abstract}
The operads of Poisson and Gerstenhaber algebras are generated by a single binary element if we 
consider them as Hopf operads (i.e. as operads in the category of cocommutative coalgebras).
In this note we discuss in details the Hopf operads generated by a single element of arbitrary arity.
We explain why the dual space to the space of $n$-ary operations in this operads are quadratic and Koszul algebras. We give the detailed description of generators, relations and a certain monomial basis in these algebras.

\end{abstract}

\tableofcontents
	
\section{Introduction}

Recall that the homology of the topological operad is an operad in the category of commutative coalgebras which is nowdays called a Hopf operad. Many modern results in deformation theory and homotopy theory of operads deal with Hopf operads (see e.g.\cite{Willwacher_grt},\cite{Fresse}).
On the other hand very few examples of Hopf operads, that has a rigorously simple description is known so far.
One of the main goals of this paper is to give different detailed descriptions of the simplest algebraic Hopf operads with multi-ary generators one can ever encounter. 
Namely, for each $k\in\NN$ and $d\in\ZZ$ we define the Hopf operad called $\Pois^k_d$, such that for $k=1$, $d> 1$ these operads coincides with the homology of the famous little discs operad $E_d$ and $\Pois^1_1$ is equal to the ordinary operad of Poisson algebras.
We define  operads $\Pois^{k}_d$ in Section~\S\ref{sec::Pois::def} as operads (in the category of graded vector spaces) generated by a binary commutative associative product and  the generalized (skew)symmetric Lie bracket that depends on $(k+1)$ arguments, yielding the Leibniz rule~\eqref{eq::Leib::rel} and generalized Jacobi identity~\eqref{eq::Jacobi}.
It turns out, that $\Pois^k_d$ admits a Hopf structure and is generated by a single primitive operation $\nu_k\in \Pois^k_d(k+1)$ of homological degree $(1-d)$ that is supposed to be primitive with respect to the Hopf structure (see Definition~\ref{def::primitive}). This is the reason why we say that  the operads $\Pois^k_d$ are  the simplest Hopf operads.
The cheap statement is that the operads $\Pois^k_d$ are Koszul as operads in the category of graded vector spaces (Proposition~\ref{thm::Pois::Koszul}). However, the proof that for each $n$ the space of $n$-ary operations $\Pois^k_d(n)$ is a quadratic Koszul coalgebra is much more involved (see Theorem~\ref{thm::pairing}). We call  the corresponding algebras  \emph{generalized Orlik-Solomon algebras} and  denote them  by $\calA^k_d(n)$. We present  their definition already in the introduction because of the beauty of this description:
\[\calA^k_d(n):=
\kk\left[
\begin{array}{c}
\omega_S , S\subset \{1,\ldots,n\}, |S|=k+1, \\
\deg(\omega_S)=kd-1, \\
\omega_{\{i_\sigma(1),\ldots,i_{\sigma(k+1)}\}} = (-1)^{d|\sigma|} \omega_{\{i_1,\ldots,i_{k+1}\} }
\end{array} \left|
\begin{array}{c}
\omega_{S} \omega_{T}, \text{ if } |S\cap T| > 1, \\
\omega_{\{i_1 \ldots i_{k+1}\}} \omega_{\{i_{k+1} \ldots i_{2k+1}\}} +  \qquad \qquad \\
\quad+ \omega_{\{i_2 \ldots i_{k+2}\}} \omega_{\{i_{k+2} \ldots i_{2k+1} i_1\}} + \qquad \\ 
\qquad\quad+\ldots+ \omega_{\{i_{k+1} \ldots i_{2k+1}\}} \omega_{\{i_{2k+1} i_1 \ldots i_{k}\}}.
\end{array}
\right.
\right]. 
\]
It is worth mentioning that for the case $k=2$, $d=1$ these algebras is known to coincide with the homology of the real locus of the moduli space of stable rational curves (\cite{Etingof_Rains}), the koszulness of $\calA^k_d(n)$ was proven just recently in~\cite{Khor::Wilw::MonR}.
We also describe the similar generalizations of Drinfeld-Kohno Lie algebras $\tkd_d^k(n)$ that happens to be the Koszul dual to Orlik-Solomon algebras $\calA_d^k(n)$: 
\[
\tkd^{k}_d(n):=
\Lie\left(
\begin{array}{c}
\nu_{I}, \ I\subset [1n], |I|=k+1;\\
\nu_{\sigma(1)\tiny{\ldots}\sigma(k+1)} = (-1)^{d |\sigma|}\nu_{1\tiny{\ldots}k+1}\\
\deg(\nu_I) = 2-kd.
\end{array}
\left|
\begin{array}{c}
[\nu_{I},\nu_{J}], \text{ if } I\cap J =\emptyset, \\ 
{\left[\nu_{I},\sum_{s\in I}\nu_{J\setminus \{t\}\cup \{s\}}\right]. 
	\text{ if } I\cap J =\{t\} }
\end{array}
\right.
\right)
\]
The (co)operadic composition rules for the generalized Orlik-Solomon algebras and Drinfeld-Kohno Lie algebras are natural generalizations of the known classical one (see \eqref{eq::cocompos::OS} and \S\ref{sec::Lie}).
The proof that the coalgebras $\Pois_d^k(n)$ are quadratic uses the  sequence of intermediate equivalent models of these dg-Hopf operads. Let us draw the diagram of all equivalences we are going to prove in the paper:
\[
\begin{tikzcd}
\ho\Pois_d^k(n) \arrow[rrr,"{\S\ref{sec::Graphs::odd}, \text{Theorem}~\ref{thm::H:Graph:1}}"]  \arrow[d,"\S\ref{sec::hoPois}", "H(-)"'] & & & \fGraphs_{d+1}^{k\orient}(n)  & & 
\ICG_{d+1}^{k\orient}(n) \arrow[ll,"C_{\ldot}^{CE}(-)"'] \arrow[d,"{\S\ref{sec::Graphs::odd}, \text{Theorem}~\ref{thm::ICG::Koszul}}", "H(-)"'] \\
\Pois^{k}_{d}(n) \arrow[rrr,leftrightarrow,"{\langle-;-\rangle}","{\S\ref{sec::pairing}}"']& && \calA^{k}_{d}(n) && \tkd^k_d(n) \arrow[ll,"{\text{Corollary}~\ref{thm::dual::OS::KD}}","!"']
\end{tikzcd}
\]

As a byproduct of our theorems we construct a monomial basis of $\calA_d^k(n)$ in Section~\S\ref{sec::OS::basis} and give a description of the generating series (with respect to parameter $n$) of the Hilbert series of dimensions of graded components of the algebras $\calA_d^k(n)$.

The bad news about Hopf operads is that it seems to be a rare situation that the Hopf operad is quadratic (and Koszul) as an operad and all spaces of $n$-ary operations are quadratic (and Koszul) algebras at the same time.
For example, if one considers the Hopf operad generated by a pair of (skew)symmetric primitive elements of the same arity $k$ then the corresponding algebras will be quadratic only for $k=1$, but even for $k=1$ the corresponding algebras does not satisfy the Koszul property as shown in~\cite{DK::Biham}.

\section*{Acknowledgement}
I would like to thank Vladimir Dotsenko, Nikita Markarian, Sergei Merkulov, Dmitri Piontkovski and Thomas Willwacher  for stimulating discussions. I would like to acknowledge Yurii Ivanovich Manin and Bruno Vallette for the correspondence concerning~\cite{Manin::Vallette} which forses me to finish this paper.

My research was carried out within the HSE University Basic Research Program
and funded (jointly) by the Russian Academic Excellence Project '5-100', 
Results of  Section~\S\ref{sec::Graphs::odd}, (in particular, Theorem~\ref{thm::ICG::Koszul}) have been obtained under support of the RSF grant No.19-11-00275. 
I am a Young Russian Mathematics award winner and would like to thank its sponsors and jury.

\section{Recollections and Definitions}
\subsection{Notations for (co)operads}
We refer to the textbooks~\cite{LV,Markl_Shneider} for different standard definitions of operads.
We typically use capital letters for the notations of operads. E.g. we denote by $\Com$ the operad of commutative algebras and by $\Lie$ the operad of Lie algebras. 
The space of $n$-ary operations of the operad $\calP$ is denoted by $\calP(n)$ and in the case we want to mark the inputs by a finite set $I$ of cardinality $n$ we will write $\calP(I)$. 
 
It is often convenient for applications to deal with cooperads, for example a Hopf cooperad is a cooperad in the category of commutative algebras. We will typically use the additional symbol $\pdu$ before the name of the corresponding operad and denote the cooperad $\pdu\calP$ for the dual operad especially in case  $\calP(n)$ are finite-dimensional, for all $n$.
We denote partial cocompositions in the cooperad $\pdu\calP$  by the greek letter $\varphi$:
\begin{equation}
\label{eq::cocompos}
\varphi_{\pdu\calP}^{I,J}: \pdu\calP(I\sqcup J) \rightarrow
\pdu\calP(I\sqcup\{*\}) \otimes \pdu\calP(J)
\end{equation}
and the usual dot-sign is used for the algebra multiplication.

\subsection{Suspension or Homological shift}
We deal a lot with Koszul duality for operads and would like to fix certain conventions.
Let $\calP$ be an algebraic operad. 
We say that a structure of an algebra over a homologically shifted ($k$-suspended) operad $\calP\{k\}$ on a chain complex $V^{\udot}$ is in one-to-one correspondence with the structure of a $\calP$-algebra on a shifted complex $V^{\udot}[k]$. In particular, the homological shift (also called the suspension) increases the homological degree of the space of $n$-ary operations $\calP(n)$ by $(n-1)$ and multiplies with a sign representation $\mathbb{1}_{-}$:
\[
\calP\{1\}(n) := \calP(n)[1-n]\otimes \mathbb{1}_{-}
\]
Following the same ideology the homological shift of a cooperad shifts the degrees of cogenerators in the other direction. In particular, the cooperad $\pdu\Com\{k\}$ is cogenerated by a single element of degree $-k$ which is skew-symmetric for $k$ odd.

In order to preserve the standard conventions suggested by~\cite{Ginz_Kapranov} that predicts the Koszul duality between $\Com$ and $\Lie$ operads we pose the following conventions:
\\
Let $\calP$ be an algebraic operad and $\pdu\calP$ be the corresponding cooperad. Then as a chain complex the cobar construction $\Omega(\pdu\calP)$ is isomorphic to the homological shift of the free operad generated by the shifted symmetric collection $\pdu\calP[-1]:=\cup \pdu\calP(n)[-1]$:
\[
\Omega(\pdu\calP):= \calF(\pdu\calP[-1])\{-1\}
\]
Respectively, we use the following degree conventions for the bar-construction:
\[
\calB(\calP):= \calF^{c}(\calP[1])\{1\}
\]
and we say that a Koszul dual operad $\calP^{!}$ is the homology of the cobar construction $\Omega(\pdu\calP)$. For example, if the Koszul operad $\calP$ is generated by a single symmetric ternary operation of degree $0$ then the operad $\calP^{!}$ is generated by a single skew-symmetric ternary element of degree $1-2=-1$. 

\subsection{Hopf operads}
An operad $\calP$ in a symmetric monoidal category $(\calC,\otimes)$ is called \underline{Hopf} if there exists a coassociative comultiplication $\Delta_{\calP}:\calP \rightarrow \calP\otimes \calP$.
\begin{example}
	An operad $\calP$ in the category $\Top$ of topological spaces is a Hopf operad whose comultiplication is given by the diagonal map.
\end{example}
 The homology (and the chains) of a topological operad provide examples of Hopf operads in (differential) graded vector spaces.
We say that an operad in the category of differential graded commutative coalgebras (dgca for short) is 
\emph{an algebraic Hopf operad}.

Suppose that $\calP$ is an operad in $\Top$ such that the space of $n$-ary operations $\calP(n)$ is a connected CW complex. Then the rational homotopy type $\pi_{\ldot}(\calP_{\QQ})$ form an operad in the category of $\LL_{\infty}$-algebras.  


\begin{lemma}
	\label{lem::pairing}
	The Hopf operad $\calP$ and the Hopf cooperad $\pdu\calP$ are dual to each other iff there exists a collection of nondegenerate pairings
	\begin{equation}
	\label{eq::pairing::def}
	\langle \ttt, \ttt \rangle: \pdu\calP(n) \otimes \calP(n) \to \kk, \ \ \forall n\in\NN
	\end{equation}
	that is compatible with the (co)operadic structure:
	\[\langle f, \alpha\circ_{I,J} \beta  \rangle = 
	\langle (\varphi_{\pdu\calP}^{I,J})^1(f), \alpha \rangle \langle (\varphi_{\pdu\calP}^{I,J})^2(f), \beta \rangle
	\]
	and is compatible with the (co)multiplication (Hopf structure):
	\[\langle f\cdot g, \alpha  \rangle = 
	\langle f, \Delta^{1}(\alpha) \rangle \langle g, \Delta^{2}(\alpha) \rangle.
	\]
\end{lemma}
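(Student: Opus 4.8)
The plan is to unwind the definitions and observe that the two displayed compatibilities are precisely the linear transposes of the cooperad and coalgebra structure maps. Recall that, by definition, $\pdu\calP$ being the \emph{dual} Hopf cooperad of $\calP$ means three things: $\pdu\calP(n)=\calP(n)^\vee$ as objects of the base symmetric monoidal category and compatibly with the $\bbS_n$-actions; the partial cocompositions $\varphi^{I,J}_{\pdu\calP}$ are the transposes of the partial compositions $\circ_{I,J}$; and the comultiplications $\Delta_{\pdu\calP}$ on the $\pdu\calP(n)$ are the transposes of the commutative products on the $\calP(n)$. All of this makes sense only because we assume every $\calP(n)$ finite-dimensional, so that the canonical map $\calP(n)^\vee\otimes\calP(m)^\vee\to(\calP(n)\otimes\calP(m))^\vee$ is an isomorphism; I keep this hypothesis throughout.

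For the ``only if'' direction, assume $\pdu\calP=\calP^\vee$ with the transposed structure maps and take $\langle f,\alpha\rangle:=f(\alpha)$, the canonical evaluation of $f\in\calP(n)^\vee$ on $\alpha\in\calP(n)$; it is nondegenerate by finite-dimensionality. Writing $\varphi^{I,J}_{\pdu\calP}(f)=\sum(\varphi^{I,J}_{\pdu\calP})^1(f)\otimes(\varphi^{I,J}_{\pdu\calP})^2(f)$, the adjunction between $\varphi^{I,J}_{\pdu\calP}$ and $\circ_{I,J}$ is exactly the first displayed identity of the statement, and the adjunction between $\Delta_{\pdu\calP}$ and the multiplication on $\calP(n)$ is exactly the second. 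The one point requiring care is the Koszul sign produced when the symmetry of the monoidal category is applied inside an evaluation map on a tensor product; these signs are governed by the degree conventions fixed above, and with those conventions in force the two identities hold as written.

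Conversely, given a collection of nondegenerate pairings as in \eqref{eq::pairing::def}, nondegeneracy together with finite-dimensionality produces $\bbS_n$-equivariant isomorphisms $\Phi_n\colon\pdu\calP(n)\xrightarrow{\,\sim\,}\calP(n)^\vee$, $\Phi_n(f)=\langle f,-\rangle$. The first compatibility says that under $\Phi$ the cocomposition $\varphi^{I,J}_{\pdu\calP}$ is carried to the transpose of $\circ_{I,J}$, so the $\Phi_n$ assemble into an isomorphism of cooperads $\pdu\calP\cong\calP^\vee$; the second says the same $\Phi$ intertwines $\Delta_{\pdu\calP}$ with the transpose of the product, upgrading this to an isomorphism of Hopf cooperads. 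Hence $\calP$ and $\pdu\calP$ are dual in the required sense. I expect the only genuine subtlety, as opposed to bookkeeping, to be the indispensable use of finite-dimensionality --- without it the canonical map on duals of tensor products is not invertible and the equivalence fails --- together with the monoidal sign check; both are routine once the conventions of the previous subsections are in place.
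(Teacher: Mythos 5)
Your proof is correct and takes the same route as the paper, whose entire proof is the single line ``Directly follows from the definitions''; you have simply written out that definitional unwinding (evaluation pairing, transposed structure maps, finite-dimensionality for the converse) in full.
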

\begin{proof} Directly follows from the definitions.
\end{proof}	

One of the main viewpoint of this article is to insist the reader that the structure of a Hopf operad is very rigid. Namely, as we will see from the examples, if the Hopf operad $\calP$ is finitely presented as an operad and the algebras $\pdu\calP$ are finitely presented algebras then the pairing\eqref{eq::pairing::def} requires to have a big kernel both on algebra and operadic level, in particular. 
We will try to explain the meaning of this meta-statement for the case of Hopf operads generated by a single element.

\subsection{Primitive elements in Hopf (co)operads}
This section can be omitted because it contains definitions that are not used in the core of the paper.
However, these definitions explains the nature of the examples of the Hopf operads we are working with.

\begin{definition}
	\label{def::Hopf::operad}
	A Hopf operad $\calP$ is called
	\begin{itemize}
		\setlength\itemsep{-0.4em}
		\item 
		{\bf unital} if for all $n$ the coalgebras $\calP(n)$ are counital and the collection of counits $\varepsilon:\calP(n)\twoheadrightarrow \kk$ defines a surjective morphism onto operad of commutative algebras:
		\[ \varepsilon: \calP \twoheadrightarrow \Com\]
		\item 
		{\bf augmented} if for all $n$ the coalgebras $\calP(n)$ admits a coaugmentation $\epsilon:\kk \hookrightarrow \calP(n)$  that assembles a morphism of the operad
		\[\epsilon: \Com \hookrightarrow \calP\]
		such that the composition $\varepsilon\circ \epsilon$ is an identity automorphism  of the commutative operad. We reserve the letter $\mu$ for the image of the commutative associative multiplication in $\calP(2)$.
		\item 
		{\bf {connected}} if there exists a unique $0$-ary operation $\kappa$ providing maps of coalgebras $$\circ_i(\kappa):\calP(n+1)\to \calP(n).$$
		In particular, if $\calP$ is {connected} and augmented then \[\mu\circ\kappa = 1\in \calP(1).\]
		\item
		{\bf graded} if $\calP(n)= \oplus_{i\in\ZZ} \calP(n)_{i}$ is a graded commutative coalgebra such that the operadic compositions preserves grading. 
	\end{itemize}	
\end{definition}
\begin{definition}
	An element $\gamma\in\calP(n)$ of a connected Hopf operad $\calP$ is called 
	\begin{itemize}
		\item 	{\bf nilpotent} if $\gamma\circ_i \kappa =0$ for all $i=1$,\ldots,$n$.
		\item {\bf (skew)symmetric} if $\gamma(x_{\sigma(1)},\ldots,x_{\sigma(n)}) = \pm \gamma(x_1,\ldots,x_n)$.
	\end{itemize}
	
\end{definition}

\begin{definition}
	\label{def::primitive}
	\begin{itemize}
		\item
		An element $\gamma\in\calP(n)$ of a Hopf augmented operad $\calP$ is called {\bf primitive} iff
		\[ \Delta_{\calP}(\gamma) = \underbrace{\mu\circ\mu\circ\ldots\circ \mu}_{n} \otimes \gamma + 
		\gamma \otimes \underbrace{\mu\circ\mu\circ\ldots\circ \mu}_{n}.
		\]
		In other words, $\gamma$ is a primitive element of the coalgebra $\calP(n)$, since the itereted composition of commutative multiplication defines a counit in $\calP(n)$.
		\item
		A collection of primitive elements $\{\gamma_s|s\in S\}$ of a Hopf augmented operad $\calP$ is called {\bf primitive generators} of $\calP$ if the set $\{\mu_2\}\sqcup\{\gamma_s|s\in S\}$ generates the operad $\calP$.
		\item
		A Hopf operad is called {\bf generated by primitives} iff there exists a collection of primitive generators which is minimal as a set of generators.
	\end{itemize}
\end{definition}

\begin{remark}
	Note that when we say that a {connected} operad $\calP$ is generated by the set $S$ we mean that $\calP(0)=\kk\kappa$, $\calP(1)\ni Id$ and $\oplus_{n\geq1}\calP(n)$ is generated by $S$.
	In particular, if $\calP(1)=\kk Id$, one can say that the element $\gamma\in\calP(n)$ of arity $n\geq 2$ is indecomposable if it may not be presented as a sum of compositions of elements of lower arities that are greater than $1$. 
\end{remark}

The key motivation for Definition~\ref{def::Hopf::operad} is the following 
\begin{proposition}
	Let $\calM$ be an operad in the category of topological spaces  then
	\begin{itemize}
		\setlength{\itemsep}{-0.3em}
		\item $H_{\ldot}(\calM;\kk)$ is a graded Hopf operad and respectively $H^{\udot}(\calM;\kk)$ is the corresponding dual graded Hopf cooperad. 
		\item the map $M\to \text{ point}$ makes an operad $H_{\ldot}(\calM;\kk)$ to be unital.
		\item 	If, moreover, for all $n\geq 1$ the spaces of $n$-ary operations $\calM(n)$ are connected then the Hopf operad $H_{\ldot}(\calM,\kk)$ is augmented with $\mu=H_{0}(\calM(2))$.
		\item 
		The operad $H_{\ldot}(\calM,\kk)$ is connected if $\calM(0)$ is a contractible space.
	\end{itemize}
\end{proposition}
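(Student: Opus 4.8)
The plan is to observe that singular homology with coefficients in the field $\kk$, regarded as a functor $H_{\ldot}(-;\kk)$ from topological spaces (with the cartesian product) to graded $\kk$-vector spaces (with the tensor product), is lax symmetric monoidal via the Künneth map, and that over a field this map is an \emph{isomorphism}; hence $H_{\ldot}(-;\kk)$ transports every (co)monoidal structure living in $\Top$ to the corresponding structure in graded vector spaces. First I would apply this functor to the partial composition maps $\circ_i\colon\calM(n)\times\calM(m)\to\calM(n+m-1)$ and to the symmetric group actions of the operad $\calM$; composing with the Künneth isomorphisms $H_{\ldot}(\calM(n))\otimes H_{\ldot}(\calM(m))\xrightarrow{\sim}H_{\ldot}(\calM(n)\times\calM(m))$ one obtains composition maps on homology that satisfy the operad axioms, since those axioms are equalities of maps in $\Top$ and $H_{\ldot}$ is a functor. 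The homological grading is preserved by construction, yielding the underlying graded operad.

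Next I would extract the Hopf structure from the diagonal maps $\Delta_X\colon X\to X\times X$. For each space $X$ the composite $H_{\ldot}(X)\xrightarrow{H_{\ldot}(\Delta_X)}H_{\ldot}(X\times X)\xrightarrow{\sim}H_{\ldot}(X)\otimes H_{\ldot}(X)$ is a coassociative, cocommutative comultiplication (cocommutativity and coassociativity being the images under $H_{\ldot}$ of the evident identities satisfied by $\Delta_X$ in $\Top$, using that Künneth intertwines the swap of factors with the graded swap of tensor factors), counital via $H_{\ldot}(X\to\mathrm{pt})$ whenever $X\neq\emptyset$. Naturality of the diagonal with respect to the maps $\circ_i$ — the square relating $\circ_i$ and the diagonals commutes on the nose in $\Top$ — then says exactly that the comultiplications $\Delta_{\calM(n)}$ assemble into a morphism of operads $\Delta_{H_{\ldot}(\calM)}\colon H_{\ldot}(\calM)\to H_{\ldot}(\calM)\otimes H_{\ldot}(\calM)$, i.e.\ the required Hopf structure. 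Dualizing degreewise (using finiteness of $\dim_{\kk}H_i(\calM(n))$, or equivalently invoking the Künneth theorem directly in cohomology) turns this comultiplication into the cup product and the operadic composition into cocomposition, so $H^{\udot}(\calM;\kk)$ is the dual Hopf cooperad. This settles the first bullet.

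The remaining bullets are then formal. Unitality: the maps $\calM(n)\to\mathrm{pt}$ form a morphism from $\calM$ to the terminal operad in $\Top$, whose homology is $\Com$; applying $H_{\ldot}$ gives the counit $\varepsilon\colon H_{\ldot}(\calM)\twoheadrightarrow\Com$, surjective because $H_0(\calM(n))\to H_0(\mathrm{pt})=\kk$ is onto whenever $\calM(n)\neq\emptyset$. Augmentation: if $\calM(n)$ is connected for all $n\geq1$ then $H_0(\calM(n))=\kk$ and the inclusion of this degree-zero piece is a coaugmentation $\epsilon\colon\kk\hookrightarrow H_{\ldot}(\calM(n))$; since for connected spaces the operadic composition on $H_0$ is just the canonical isomorphism $\kk\otimes\kk\xrightarrow{\sim}\kk$, matching $\Com$, these assemble into an operad morphism $\epsilon\colon\Com\hookrightarrow H_{\ldot}(\calM)$ with $\varepsilon\circ\epsilon=\Id$, and $\mu$ is the generator $H_0(\calM(2))$. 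Connectedness: if $\calM(0)$ is contractible then $H_{\ldot}(\calM(0))=\kk$ is one-dimensional, giving a unique $0$-ary class $\kappa$, and the maps $\circ_i(\kappa)\colon H_{\ldot}(\calM(n+1))\to H_{\ldot}(\calM(n))$ induced by $\circ_i(-,\mathrm{pt})\colon\calM(n+1)\to\calM(n)$ are morphisms of coalgebras by naturality of the diagonal, exactly as in the Hopf step.

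I do not expect a genuine obstacle: the proposition is a bookkeeping exercise in the functoriality of $H_{\ldot}$. The only points meriting attention are that field coefficients are essential so that the Künneth map is an isomorphism (otherwise one obtains merely a lax structure), that a finite-type hypothesis on the $\calM(n)$ is what licenses the passage between $H_{\ldot}$ and $H^{\udot}$ by degreewise linear duality, and that the arity-zero component must be treated in accordance with the \emph{connected} clause of Definition~\ref{def::Hopf::operad} — which is precisely why contractibility of $\calM(0)$, rather than mere connectedness, is assumed in the last bullet.
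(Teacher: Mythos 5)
The paper states this proposition without any proof, treating it as a standard consequence of the fact that over a field $\kk$ the K\"unneth map makes $H_{\ldot}(-;\kk)$ a strong symmetric monoidal functor and that the diagonal is natural with respect to the operadic compositions --- which is exactly the argument you give, so your proof is correct and fills in precisely what the author left implicit. Your closing caveats (field coefficients for K\"unneth, finite type for the passage to $H^{\udot}$, contractibility rather than connectedness of $\calM(0)$) are the right ones to flag.
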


\begin{remark}
	There exists other (co)monoidal functors from the category of topological spaces to the category of commutative (co)algebras that produce other examples of connected, graded, unital Hopf operads. However, the augmentation is not given by default even if the spaces of operations of any given arity is connected. One has to deal with pointed spaces.  
\end{remark}

\begin{example}
	The cohomology of the little cubes operad $E_d$ is a unital connected Hopf operad with a unique primitive generator $\nu_2\in H^{d-1}(E_d(2);\QQ) = H^{d-1}(S^{d-1};\QQ) $.
\end{example}

\begin{conjecture}
	If the Hopf operad $\calP$ is generated by primitive nilpotent elements which generates a (Koszul) suboperad $\calQ$ then there exists a ditributive law $\lambda:\calQ\circ\Com \to \Com\circ\calQ$ that implies the isomorphism of symmetric collections $\calP \simeq \Com\circ\calQ$.
\end{conjecture}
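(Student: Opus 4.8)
The plan is to reduce the statement to the Poincar\'e--Birkhoff--Witt theorem for distributive laws. Write $\mu\in\calP(2)$ for the commutative multiplication coming from the coaugmentation $\Com\hookrightarrow\calP$, let $\{\gamma_s\}$ be the minimal primitive generating set, and let $\calQ$ be the suboperad it generates. First I would introduce the canonical morphism of symmetric collections
\[
c\colon \Com\circ\calQ\longrightarrow\calP
\]
built from the operad inclusions $\Com\hookrightarrow\calP$ and $\calQ\hookrightarrow\calP$ followed by operadic composition, and prove that it is an isomorphism. Surjectivity of $c$ should follow from the hypothesis that $\mu$ and the $\gamma_s$ generate $\calP$, together with a rewriting lemma that moves every occurrence of $\mu$ above every occurrence of a $\gamma_s$ in an arbitrary composite tree; using associativity of $\mu$ and of the internal compositions of $\calQ$, this reduces to rewriting the single element $\gamma_s\circ_i\mu$ as an element of the image of $c$.

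The Hopf structure is exactly what makes this rewriting possible. Since $\Delta_\calP\colon\calP\to\calP\otimes\calP$ is a morphism of operads, $\mu$ is group-like (being the image of the group-like generator of $\Com$, so $\Delta_\calP(\mu)=\mu\otimes\mu$) and each $\gamma_s$, say of arity $m$, is primitive, the identity $\Delta_\calP(\gamma_s\circ_i\mu)=\Delta_\calP(\gamma_s)\circ_i\Delta_\calP(\mu)$ unfolds to
\[
\Delta_\calP(\gamma_s\circ_i\mu)=\mu^{[m+1]}\otimes(\gamma_s\circ_i\mu)+(\gamma_s\circ_i\mu)\otimes\mu^{[m+1]},
\]
where $\mu^{[m+1]}$ is the iterated product, which is group-like in $\calP(m+1)$; thus $\gamma_s\circ_i\mu$ is again primitive. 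The core structural input is then a description of the primitives of each coalgebra $\calP(n)$: I expect that every primitive of $\calP(n)$ lies in the image of $c$, in fact in the subspace spanned by the monomials $\mu^{[\bullet]}(\alpha,x_{j_1},\dots,x_{j_r})$ with $\alpha\in\calQ$ and the remaining inputs singletons. Granting this, each $\gamma_s\circ_i\mu$ acquires a preferred expression in $\Com\circ\calQ$, and declaring these expressions to be the values of a map on generators defines the candidate distributive law $\lambda\colon\calQ\circ\Com\to\Com\circ\calQ$.

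It then remains to verify that $\lambda$ is a genuine distributive law — that it satisfies the two compatibility (pentagon) axioms with the associativity of $\mu$ and with the defining relations of $\calQ$ — and to invoke the PBW theorem for distributive laws (see \cite{LV,Markl_Shneider}), which identifies the underlying symmetric collection of $\Com\circ_\lambda\calQ$ with $\Com\circ\calQ$ and yields a surjection $\Com\circ_\lambda\calQ\twoheadrightarrow\calP$ refining $c$. For the compatibility axioms, both sides of each identity are primitive and have equal image in $\calP$, so uniqueness of the primitive decomposition together with $\Delta_\calP$ being an operad morphism should force them to agree. For injectivity of $\Com\circ_\lambda\calQ\to\calP$ I would show that $\ker c$ is the operadic ideal generated by the Leibniz-type relations encoded by $\lambda$: an arbitrary relation in $\calP$ is split, using $\Delta_\calP$ and the structure theory of connected cocommutative coalgebras over a field of characteristic zero, into its primitive components, and each component is then forced to be a consequence of the relations of $\calQ$ and of $\lambda$. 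The nilpotency hypothesis enters here, since $\gamma_s\circ_i\kappa=0$ makes each $\calP(n)$ conilpotent with respect to the filtration by the number of $\kappa$-insertions, so the coalgebra structure theory applies. Finally, if $\calQ$ is Koszul and $\lambda$ is a rewriting (Gr\"obner) distributive law, which one should be able to establish by exhibiting a compatible monomial order on $\Com\circ\calQ$, then $\Com\circ_\lambda\calQ$ is Koszul, which gives the parenthetical refinement.

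The main obstacle is the structural description of the primitives of $\calP(n)$, together with the attendant splitting of relations: these amount to a Milnor--Moore / Poincar\'e--Birkhoff--Witt phenomenon for Hopf operads generated by primitives, and making it precise seems to require analysing the primitive filtration of the coalgebras $\calP(n)$ more closely than the present hypotheses allow — which is why the statement is recorded here only as a conjecture.
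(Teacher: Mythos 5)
The statement you are proving is recorded in the paper only as a conjecture; the paper offers no proof of it, so there is nothing to compare your argument against, and indeed your text is a programme rather than a proof. The honest conclusion is that your plan reproduces the heuristic behind the conjecture but does not close it, and the gaps are genuine, not merely technical. The central unproved step is your "core structural input": that every primitive element of the coalgebra $\calP(n)$ lies in the image of $c$, i.e.\ is a $\mu$-product of a single $\calQ$-monomial with singleton inputs. You appeal to a Milnor--Moore / PBW phenomenon, but the classical Cartier--Milnor--Moore theory applies to connected cocommutative \emph{Hopf algebras}, whereas each $\calP(n)$ carries only a cocommutative coalgebra structure (the operadic compositions do not make it an algebra), so there is no available structure theorem forcing primitives to have the shape you need. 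Nilpotency of the generators gives a conilpotency-type filtration but by itself does not identify the primitives of $\calP(n)$ with the indicated subspace; this identification is essentially equivalent to the conjecture itself, as the paper's later hard work for the special case $\calP=\Pois^k_d$ (Theorems \ref{thm::H:Graph:1}, \ref{thm::ICG::Koszul} and \ref{thm::pairing}) illustrates.

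Two further steps would fail as written. First, for $\lambda$ to be a distributive law in the sense of \cite{Markl_distr_law} the rewriting of $\gamma_s\circ_i\mu$ must land in the quadratic part (one occurrence of $\mu$ composed with one element of the generating collection of $\calQ$); primitivity of $\gamma_s\circ_i\mu$ gives no control on the weight of its expression in $\Com\circ\calQ$, so even granting your structural input the map $\lambda$ need not be a distributive law of the required quadratic type. Second, your verification of the compatibility axioms and of injectivity is circular: you argue that two elements of $\Com\circ\calQ$ agree because their images in $\calP$ agree, but that inference presupposes injectivity of $c$ (equivalently, that $\Com\circ_\lambda\calQ\to\calP$ is an isomorphism), which is the statement being proved; Markl's theorem itself only gives a surjection $\Com\circ\calQ\twoheadrightarrow\Com\circ_\lambda\calQ$ in general, and the isomorphism requires a separate argument (e.g.\ the Gr\"obner-basis criterion of \cite{Dotsenko_Distributive_Law}, or a dimension count), neither of which is available at this level of generality. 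The parenthetical Koszulness claim inherits the same problem. In short, your outline correctly isolates where the difficulty sits, but each of the three pillars --- description of primitives, quadraticity and well-definedness of $\lambda$, and injectivity/PBW --- remains unestablished, which is precisely why the paper states this only as a conjecture.
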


\subsection{Hopf operads and Operads in Lie algebras}
The category of Lie algebras admits a simple monoidal structure given by direct sum. 
Consequently, the composition rules in the operad $\mathfrak{g}(n)$ in the category of Lie algebras are given by partial compositions:
\[
\circ_{i} : \mathfrak{g}(m) \oplus \mathfrak{g}(n)  \rightarrow \mathfrak{g}(m+n-1)
\]
Thus, for the Lie algebras there is a nontrivial composition with $0$ element and the partial composition is a direct sum of two maps from $\mathfrak{g}(m)$ and from $\mathfrak{g}(n)$ correspondingly:
\begin{align*}
\circ_{i} \quad &
:=    \text{"}\circ_{i}^{1}:= \circ_{i}\mid_{\mathfrak{g}(m)} \text{"} +  \text{"}\circ_{i}^{2}:= \circ_{i}\mid_{\mathfrak{g}(n)} \text{"} \\
\begin{tikzpicture}[scale=0.8]
\node[ext] (v) at (0,-1) {\tiny{$\mathfrak{g}(m)$}};
\node[ext] (w) at (0.5,0)  {\tiny{$\mathfrak{g}(n)$}};
\coordinate (s) at (0,-1.8);
\coordinate (v0) at (-1.3,0);
\coordinate (v1) at (-0.7,0);
\coordinate (v2) at (1.5,0);
\coordinate (u0) at (-0.3,1);
\coordinate (u1) at (-0.7,1);
\coordinate (u2) at (1.3,1);
\node (s2) at (0.5,1) {\tiny{$\cdots$}};
\draw (v) edge (s) edge (v0)   edge (v1) edge (v2) edge (w);
\draw (w) edge (u0)   edge (u1) edge (u2);
\end{tikzpicture}
& = 
\begin{tikzpicture}[scale=0.8]
\node[ext] (v) at (0,-1) {\tiny{$\mathfrak{g}(m)$}};
\node[ext] (w) at (0.5,0)  {\small{$0$}};
\coordinate (s) at (0,-1.8);
\coordinate (v0) at (-1.3,0);
\coordinate (v1) at (-0.7,0);
\coordinate (v2) at (1.5,0);
\coordinate (u0) at (-0.3,1);
\coordinate (u1) at (-0.7,1);
\coordinate (u2) at (1.3,1);
\node (s2) at (0.5,1) {\tiny{$\cdots$}};
\draw (v) edge (s) edge (v0)   edge (v1) edge (v2) edge (w);
\draw (w) edge (u0)   edge (u1) edge (u2);
\end{tikzpicture}
+ 
\begin{tikzpicture}[scale=0.8]
\node[ext] (v) at (0,-1) {\small{$0$}};
\node[ext] (w) at (0.5,0)  {\tiny{$\mathfrak{g}(n)$}};
\coordinate (s) at (0,-1.8);
\coordinate (v0) at (-1.3,0);
\coordinate (v1) at (-0.7,0);
\coordinate (v2) at (1.5,0);
\coordinate (u0) at (-0.3,1);
\coordinate (u1) at (-0.7,1);
\coordinate (u2) at (1.3,1);
\node (s2) at (0.5,1) {\tiny{$\cdots$}};
\draw (v) edge (s) edge (v0)   edge (v1) edge (v2) edge (w);
\draw (w) edge (u0)   edge (u1) edge (u2);
\end{tikzpicture}
\end{align*}

Note that the Chevalley-Eilenberg complex (as well as its homology) are  monoidal functors from the category of Lie algebras to the category of cocommutative coalgebras:
\[C^{CE}_{\ldot}(\mathfrak{g}\oplus \mathfrak{g}') \simeq C^{CE}_{\ldot}(\mathfrak{g})\otimes C^{CE}_{\ldot}(\mathfrak{g})\]
Consequently, if $\cup\mathfrak{g}(n)$ is an operad in Lie algebras then the Chevalley-Eilenberg complexes $\cup C^{CE}_{\ldot}(\mathfrak{g}(n))$ and $\cup H^{CE}_{\ldot}(\mathfrak{g}(n))$ are Hopf (dg)operads.

\begin{remark}
	If the operad $\mathfrak{g}$ in $\Lie$ has no operations in arity $1$ then the corresponding Hopf operads $C^{CE}_{\ldot}(\mathfrak{g})$, $H^{CE}_{\ldot}(\mathfrak{g})$ are graded, unital, connected operads.
\end{remark}
Unfortunately, as we will see below, if the operad $\mathfrak{g}$ in $\Lie$ is finitely generated the space of $n$-ary operations $\mathfrak{g}(n)$ might be infinite-dimensional.
Consequently, the corresponding Chevalley-Eilenberg complex will be infinite-dimensional as well, however, the (co)homology might be finite-dimensional and assemble a finitely generated Hopf operad.
If in addition the algebras $\mathfrak{g}(n)$ are (pronilpotent) then we have the following splitting:
\[
\Com = H^{CE}_{0}(\mathfrak{g}) \rightarrow H^{CE}_{\ldot}(\mathfrak{g}) \rightarrow \Com
\]

\subsection{Simplest single-generated operad in Lie algebras}
\label{sec::Lie}
Consider an operad $\tkd^{k}$ in $\Lie$ generated by a single element $t_{[1k+1]}\in \tkd^{k}(k+1)$ (with symmetry relation $\sigma\cdot t_{[1k+1]} =(-1)^{k |\sigma|} t_{[1k+1]}$) and yielding the Leibniz rule:
\begin{equation}
\label{eq::Leibniz::Lie}
t_{[1k+1]}\circ_k (0)_{k+1,k+2} = (0)_{1,k+2} \circ_1 t_{[1k+1]} + (0)_{1,k+1}\circ_1 t_{[1k+2]\setminus \{k+1\}} 
\end{equation}
and this is the only condition we impose.

\begin{proposition}
	The space of $n$-ary operations in the operad $\tkd^{k}$ is the quadratic Lie algebra:
	\[
	\Lie\left(
	\begin{array}{c}
	\nu_{I}, \ I\subset [1n], |I|=k+1;\\
	\forall\sigma\in \bbS_{k+1}\ \nu_{\sigma(1)\tiny{\ldots}\sigma(k+1)} = (-1)^{|\sigma| k}\nu_{1\tiny{\ldots}k+1}\\
	\forall\tau\in A_{n}\subset \bbS_n \quad
	\tau(\nu_{I}) = \nu_{\tau(I)} 
	\end{array}
	\left|
	\begin{array}{c}
	[\nu_{I},\nu_{J}], \text{ if } I\cap J =\emptyset, \\ 
	{\left[\nu_{I},\sum_{s\in I}\nu_{J\setminus \{t\}\cup \{s\}}\right], 
		\text{ if } I\cap J =\{t\} }
	\end{array}
	\right.
	\right)
	\]
\end{proposition}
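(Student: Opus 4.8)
The plan is to produce mutually inverse maps between $\tkd^k(n)$ and the quadratic Lie algebra $\LL(n)$ on the right-hand side. Since $\tkd^k$ is the operad in $\Lie$ generated by $t=t_{[1k+1]}$ subject only to the (skew)symmetry and to \eqref{eq::Leibniz::Lie}, it carries the structure maps of an operad in $(\Lie,\oplus)$; in particular every partial composition splits as a pair of Lie maps $\alpha\circ_i\beta=\circ_i^1(\alpha)+\circ_i^2(\beta)$ with commuting images, the ``$\circ^1$-parts'' being the strand-cabling maps. For a $(k+1)$-subset $I\subset[1n]$ let $\nu_I\in\tkd^k(n)$ be the element obtained from $t$ by cabling and insertion so that $t$ sits on the strands of $I$; the (skew)symmetry of $t$ gives $\nu_{\sigma(I)}=(-1)^{k|\sigma|}\nu_I$ and the $\bbS_n$-equivariance $\tau(\nu_I)=\nu_{\tau(I)}$.

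First I would build a surjection $\LL(n)\twoheadrightarrow\tkd^k(n)$. For generation one uses that $\tkd^k$ is generated by $t$: every element of $\tkd^k(n)$ is a Lie polynomial in iterated cablings of operadic composites of copies of $t$, and \eqref{eq::Leibniz::Lie} together with the splitting $\circ_i=\circ_i^1+\circ_i^2$ lets one rewrite, by induction on the number of copies of $t$ involved, any such element as a Lie polynomial in the $\nu_I$. One then checks the two families of relations hold in $\tkd^k(n)$. The relation $[\nu_I,\sum_{s\in I}\nu_{J\setminus\{t\}\cup\{s\}}]=0$ for $I\cap J=\{t\}$ is precisely \eqref{eq::Leibniz::Lie}, transported along the composition that places one copy of $t$ on $I$ and one on $J\setminus\{t\}$ and then cables, after re-bracketing. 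The relation $[\nu_I,\nu_J]=0$ for disjoint $I,J$ comes from the fact that the operadic composition $\gamma$ is a morphism of Lie algebras out of a \emph{direct sum}, whose summands commute: the images of two distinct input-slots of any common frame commute in $\tkd^k$, and for disjoint $I,J$ one can exhibit $\nu_I,\nu_J$ as such images (using $t$, or a composite of copies of $t$, as the frame; small values of $n$ are reduced to larger ones via the insertions $\tkd^k(m)\hookrightarrow\tkd^k(m')$).

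For the reverse direction I would construct directly the operad structure on the collection $\{\LL(n)\}_n$ in $\Lie$ — the ``infinitesimal $(k+1)$-ary braid operad'' — with insertions $\circ_i^2$ given by relabelling and adding ghost strands, and cablings $\circ_i^1$ defined on the generators $\nu_I$ by summing over the routings of $I$ through the cabled strand and then extended as a Lie algebra map. Granting this is a well-defined operad, $\nu_{[1k+1]}\in\LL(k+1)$ is a (skew)symmetric element satisfying \eqref{eq::Leibniz::Lie}, so the universal property of $\tkd^k$ yields a morphism of operads $\tkd^k\to\{\LL(n)\}$; the composite $\LL(n)\to\tkd^k(n)\to\LL(n)$ is the identity on generators, so $\LL(n)\to\tkd^k(n)$ is split injective and hence an isomorphism. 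Equivalently, one may run an induction on $n$ generalizing the classical case $k=1$ (where $\tkd^1(n)$ is the Drinfeld--Kohno Lie algebra): with $\mathfrak m(n)\trianglelefteq\tkd^k(n)$ the ideal generated by $\{\nu_I:n\in I\}$, the quotient is presented exactly as $\tkd^k(n-1)$, the extension splits, and $\mathfrak m(n)$ should be the free Lie algebra on $\mathrm{span}\{\nu_I:n\in I\}$ with $\tkd^k(n-1)$ acting by the derivations dictated by the relations of type $[\nu_I,\sum_s\nu_{\cdots}]$ with $t\ne n$; comparing Hilbert series via a PBW--Lyndon basis then forces the surjection of the first step to be an isomorphism.

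The main obstacle is the completeness/well-definedness step in the reverse direction: one must check that the cabling maps on $\{\LL(n)\}$ really preserve the two quadratic relations — the somewhat lengthy case analysis that is the genuine content — equivalently that the Leibniz-driven rewriting of operadic composites into the $\nu_I$ is confluent with no hidden relations, equivalently that $\mathfrak m(n)$ is genuinely free and the inductive extension is consistent. This can alternatively be deduced from the Koszulness of $\tkd^k$ as an operad (Proposition~\ref{thm::Pois::Koszul}) by a Gr\"obner-basis argument on operadic tree monomials, or read off from the explicit monomial basis of the generalized Orlik--Solomon algebras and their Koszul duals constructed later in the paper. The remaining ingredients — the symmetry relations, the two relation checks, and the generation statement — are a direct if somewhat lengthy unwinding of the operad axioms and \eqref{eq::Leibniz::Lie}.
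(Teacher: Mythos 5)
Your proposal is correct and follows essentially the same route as the paper: define $\nu_I$ as the image of $t_{[1k+1]}$ under operadic composition with zeros/cabling, verify the disjoint-support relation from the commutativity of the direct-sum monoidal structure and the one-point-intersection relation from the Leibniz rule \eqref{eq::Leibniz::Lie}, then check that the quadratic Lie algebras assemble an operad in $\Lie$ and conclude by the universal property of single generation that the two constructions are mutually inverse. The paper states the well-definedness of the operad structure on the quadratic presentation as an easy check, which is exactly the step you flag as the genuine content; your extra remarks on generation via Leibniz rewriting and the alternative inductive/Koszulness arguments are consistent elaborations rather than a different method.
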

\begin{proof}
	The generator $\nu_I$ is the image of the generator $t_{[1 k+1]}$ with respect to appropriate operadic composition with all other inputs being zero:
	\[\nu_I  = \begin{tikzpicture}[scale=0.8]
	\node[ext] (v) at (0,-1) {\small{$0$}};
	\node[ext] (w) at (0.5,0)  {\tiny{$t_{[1 k+1]}$}};
	\coordinate (s) at (0,-1.8);
	\coordinate (v0) at (-1.3,0);
	\coordinate (v1) at (-0.7,0);
	\coordinate (v2) at (1.5,0);
	\coordinate (u0) at (-0.3,1);
	\coordinate (u1) at (-0.7,1);
	\coordinate (u2) at (1.3,1);
	\node (s2) at (0.5,1) {{$I$}};
	\draw (v) edge (s) edge (v0)   edge (v1) edge (v2) edge (w);
	\draw (w) edge (u0)   edge (u1) edge (u2);
	\end{tikzpicture}
	\]
	The first quadratic relations follows from the commutativity of the monoidal structure in the operad of Lie algebras and the second relation is the application of the Leibniz rule~\eqref{eq::Leibniz::Lie}.
	
	It is easy to check that the aforementioned quadratic Lie algebras assemble an operad and consequently this operad coincides with $\tkd^{k}$ which is defined by universal properties of single generation that is obviously satisfied.
\end{proof}

\section{Single-generated Hopf operads}

\subsection{Operads of multiary commutative product and Lie bracket}
The operad $\Com$ is the first example of the quadratic Koszul operad generated by a single binary symmetric operation $\mu(x_1,x_2)=\mu(x_2,x_1)$.
The space of $n$-ary operations $\Com(n)$ is a one-dimensional trivial $\bbS_n$-representation with a chosen basic element 
\[\mu^{n-1}=\underbrace{\mu\circ\mu\circ\ldots\circ \mu}_{n-1} =\mu(x_{\sigma(1)},\mu(x_{\sigma(2)},\ldots \mu(x_{\sigma(n-1)},x_{\sigma(n)}), \ \forall \sigma\in \bbS_n
\] 

The homologically shifted operad $\Com\{d\}$ is a Koszul operad generated by a single binary operation $\mu(x_1,x_2) = (-1)^{d}\mu(x_2,x_1)$ yielding the (super)commutativity relation.
\begin{remark}
\label{rk::Com+}
	The operad generated by a single skew-symmetric binary operation $\mu(x_1,x_2)=-\mu(x_2,x_1)$ and yielding the associativity relations $\mu(\mu(x_1,x_2),x_3) = \mu(\mu(x_2,x_3),x_1) = \mu(\mu(x_3,x_1),x_2)$ is called $\Com_{-}$ and is not Koszul and even has no nontrivial operations in arity $4$ because:
\begin{multline*}
\mu(\mu(\mu(x_1,x_2),x_3),x_4) = - \mu(\mu(x_1,\mu(x_2,x_3)),x_4) =
\mu(x_1,\mu(\mu(x_2,x_3),x_4)) = \\
= - \mu(x_1,\mu(x_2,\mu(x_3,x_4))) = 
\mu(\mu(x_1,x_2),\mu(x_3,x_4)) = - \mu(\mu(\mu(x_1,x_2),x_3),x_4)
\end{multline*}
\end{remark}

Consider the natural analogues of the commutative operad denoted by $\Com^k$.
\begin{definition}
The operad $\Com^k$ is generated by a single $\bbS_{k+1}$-symmetric operation $\mu_{k+1}$ of arity $k+1$ and of $0$-homological degree subject to the following quadratic relations:
	\begin{multline}
\forall\sigma\in \bbS_{2k+1} \ \mu_{k+1}(\mu_{k+1}(x_{\sigma(1)},\ldots,x_{\sigma(k+1)}),x_{\sigma(k+2)},\ldots,x_{\sigma(2k+1)}) = \\
= \mu_{k+1}(\mu_{k+1}(x_1,\ldots,x_{k+1}),x_{k+2},\ldots,x_{2k+1}) 
\end{multline}
\end{definition}
\begin{proposition}
\label{lm::Com::k}	
The quadratic operad $\Com^k$ is Koszul and 
one has an isomorphism of $\bbS_{n}$-modules
 $$\Com^{k}(n) = \begin{cases}
\text{the trivial }\bbS_{n} \text{ representation } \mathbb{1}, \text{ if } (n-1) \vdots k, \\
0, \text{ if } (n-1) \not\vdots k
\end{cases}.$$ 
The corresponding generating series of dimensions has the following presentation
$$
\rchi_{\Com^k}(t) := \sum_{n\geq 1} \frac{\dim \Com^k(n)}{n!} t^n = \sum_{n\geq 1} \frac{t^{kn+1}}{(kn+1)!} =\frac{1}{k} \sum_{j=0}^{k-1} \xi^{-j} e^{\xi^j t}  
$$
where $\xi$ is the $k$'th primitive root of unity, e.g. $\xi = \exp(\frac{2\pi \sqrt{-1}}{k})$.
The symmetric function given by generating series of 
the corresponding $\bbS_{n}$-characters admits the following description in the basis of Newton power sums $p_m:=\sum x_i^m$:
\begin{equation}
\label{eq::Com::series}
\rchi_{\Com^k}(x_1,x_2,\ldots) = \sum_{n\geq 1} h_{nk+1} = \frac{1}{k}\left[ \sum_{j=0}^{k-1} \xi^{-j} \exp\left(\sum_{n\geq 1} \xi^{jn}\frac{p_n}{n}\right)\right]
\end{equation}
\end{proposition}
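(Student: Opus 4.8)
The plan is to derive all three assertions from a single quadratic Gr\"obner (rewriting) basis for $\Com^k$, combined with an easy lower bound on dimensions. The lower bound: the $(k+1)$-ary iterated product $\mu^k=\mu\circ\mu\circ\cdots\circ\mu\in\Com(k+1)$ is $\bbS_{k+1}$-symmetric and, since $\Com(2k+1)$ is one-dimensional, satisfies all the associativity relations defining $\Com^k$; hence there is an operad morphism $\Com^k\to\Com$, $\mu_{k+1}\mapsto\mu^k$, whose image is the suboperad generated by $\mu^k$, namely $\bigoplus_{j\ge 0}\Com(jk+1)$. Consequently $\dim\Com^k(n)\ge 1$ whenever $(n-1)\vdots k$, while $\Com^k(n)=0$ for $(n-1)\not\vdots k$ simply because a tree built from $v$ copies of a single $(k+1)$-ary generator has $vk+1$ leaves, so no operation of such an arity exists. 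For the matching upper bound and for Koszulness I would pass to the associated shuffle operad, fix a path-lexicographic order on shuffle-tree monomials, and orient each of the $\binom{2k+1}{k+1}-1$ relations so that its leading term is one fixed ``comb'' shape; the normal monomials are then the opposite combs, of which there is exactly one in each admissible arity. The only overlaps to resolve for confluence sit in arity $3k+1$ (three internal copies of $\mu_{k+1}$), where confluence is precisely the statement that all composites of three copies of $\mu_{k+1}$ coincide, which follows by applying the arity-$(2k+1)$ relations twice. Thus the relations form a quadratic Gr\"obner basis, so $\Com^k$ is Koszul; counting normal forms gives $\dim\Com^k(n)\le 1$, matching the lower bound, and since every rewriting step identifies two tree monomials with coefficient $+1$ (no signs arise, $\mu_{k+1}$ being symmetric), the residual $\bbS_n$-action on the surviving normal form is trivial. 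Hence $\Com^k(n)\cong\mathbb{1}$ when $(n-1)\vdots k$ and is $0$ otherwise.

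Granting the module structure, the exponential generating series is $\rchi_{\Com^k}(t)=\sum_{j\ge 0}t^{jk+1}/(jk+1)!$, and extracting from $e^t$ the monomials of exponent $\equiv 1\pmod k$ by the roots-of-unity filter gives $\frac1k\sum_{j=0}^{k-1}\xi^{-j}e^{\xi^j t}$, where $\xi=\exp(2\pi\sqrt{-1}/k)$. For the character generating function, the $\bbS_m$-character of $\mathbb{1}$ is $h_m$, so $\rchi_{\Com^k}(x_1,x_2,\dots)=\sum_{j\ge 0}h_{jk+1}$; inserting a bookkeeping variable $u$ into $\sum_{m\ge 0}h_m u^m=\exp(\sum_{n\ge1}p_nu^n/n)$, applying the same filter in $u$, and then setting $u=1$ yields $\frac1k\sum_{j=0}^{k-1}\xi^{-j}\exp(\sum_{n\ge1}\xi^{jn}p_n/n)$, which is \eqref{eq::Com::series}.

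The only genuine obstacle is the confluence check: the monomial order must be chosen so that the whole $\bbS_{2k+1}$-orbit of the associativity relation is coherently oriented and so that the single family of arity-$(3k+1)$ S-polynomials reduces to zero. Once that is in place, the dimension count, the triviality of the symmetric action, and both generating-series identities are routine. As an alternative to explicit rewriting I would keep in reserve the observation that $\Com^k$ is the linearization of the set operad $\mathcal{C}$ with $\mathcal{C}(n)=\{*\}$ for $n\equiv 1\pmod k$ and $\mathcal{C}(n)=\emptyset$ otherwise; this set operad is basic and quadratic, and its associated partition posets are intervals in the $k$-refinement order, which are visibly Cohen--Macaulay, so Vallette's criterion for set operads again yields Koszulness.
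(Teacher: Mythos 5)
Your proposal is correct and takes essentially the same route as the paper: a quadratic Gr\"obner basis for $\Com^k$ with respect to a path-lexicographic ordering, with exactly one normal monomial (the comb $\mu(x_1,\ldots,x_k,\mu(x_{k+1},\ldots,x_{2k+1}))$) in each admissible arity, from which Koszulness, the dimension count, and both generating-series identities follow by the roots-of-unity filter. One wording correction: the relations are not oriented so that they share ``one fixed comb'' as leading term --- rather the fixed comb is the unique \emph{normal} quadratic monomial and the leading terms run over all the other quadratic shuffle monomials (as in the paper); your lower bound via the morphism $\Com^k\to\Com$ and the set-operad/partition-poset alternative are sound additions but not part of the paper's argument.
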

\begin{proof}
	The operad $\Com^{k}$ admits a quadratic Gr\"obner basis with respect to the path-lexicographical ordering such that the only normal quadratic monomial is $\mu(x_1,\ldots,x_k,\mu(x_{k+1},\ldots,x_{2k+1}))$, all other quadratic monomials are the leading monomials of the given Gr\"obner basis.  Consequently, $\Com^k$ is a Koszul operad thanks to the results of~\cite{DK::Grob}. See also \S5.2 and Theorem~5.3 of~\cite{DK::Anick}.
	
The Hilbert series are computed in a straightforward way, and follows from the clear description of the space of $n$-ary operations.
\end{proof}

 The homologically shifted operads $\Com^{k}\{d\}$ are Koszul as well. Note that $\Com^{k}\{d\}$  is generated by a single $k+1$-ary operation $\mu_{k+1}$ of homological degree $dk$ and for $\sigma\in\bbS_{k+1}$ we have $\sigma\cdot\mu_{k+1} = (-1)^{|\sigma| d}\mu_{k+1}$.
The Koszul-dual operad $\Lie^{k}\{-d\}:=(\Com^k\{d\})^{!}$ is generated by a single element $\nu_{k+1}= (-1)^{|\sigma| d} \sigma \cdot \nu_{k+1}$ of homological degree $1-dk$ yielding the following quadratic relation:
\begin{equation}
\label{eq::Jacobi}
\sum_{\sigma\in A_{2k+1}\cap \bbS_{2k+1}/\bbS_{k+1}\times\bbS_{k}}
\nu(\nu(x_{\sigma(1)},\ldots,x_{\sigma(k+1)}),x_{\sigma(k+2)},\ldots x_{\sigma(2k+1)}) = 0 
\end{equation}
The corresponding generating series of $\Lie^k$ is the inverse of the generating series of $\Com^k$ and its coefficients can be computed using, for example the Lagrange inverse formula:
\begin{equation}
\label{eq::Lie::series}
\dim\Lie^k(n) = 
\left[\left(\sum_{m\geq 0} \frac{(-t)^{km}}{(km+1)!} \right)^{-n}\right]_{(n-1)}.
\end{equation}
Here $[f(t)]_{(n)}$ denotes the $n$-th coefficient $f_n$ in the Taylor expansion of $f(t) = \sum_{n\geq 0} \frac{f_n}{n!}t^n$.
Moreover, one can use the presentation~\eqref{eq::Com::series} in terms of Newtons sums to get the inverse with respect to plethystic substitution that remembers the $\bbS$-character of $\Lie^k$.
Recall that for $k=1$ $\Lie^k=\Lie$ and the generating series $\rchi_{\Lie}(t) = -\ln(1-x)$, respectively $\rchi_{\Lie^2}(t)=\mathsf{arcsinh}(t)$.
However, we were not able to recognise the generating series $\rchi_{\Lie^k}(t)$ as a known function for $k>2$.
 
\begin{remark}
	The operad $\Com^{k}_{-}$ (as well as its homological shifts $\Com^{k}_{-}\{d\}$) that are generated by a single skew-symmetric operation are obviously not Koszul and thus they are out of our consideration. We refer to~\cite{Markl_Rem_nonkoszul} for the corresponding nonsymmetric analogues of these operads.
\end{remark}

\subsection{Generalizations of the Poisson operad}
\label{sec::Pois::def}
Let us define a higher-dimensional analogue of the operad of Poisson algebras. Let $\Pois^k$ be an algebraic operad generated by a commutative associative binary multiplication $\mu_2$ and a skew-symmetric $k+1$-ary operation $\nu_{k+1}$ of homological degree $1-k$ that generates the suboperad $\Lie^{k}:= (\Com^k)^{!}$ subject to the following Leibniz identity:
\begin{multline}
\label{eq::Leib::rel}
\nu_{k+1}(x_1,\ldots,x_k,\mu_2(x_{k+1},x_{k+2})) =  \\
= \mu_2(\nu_{k+1}(x_1,\ldots,x_k,x_{k+2}),x_{k+1})+ (-1)^{\epsilon} 
\mu_2(\nu_{k+1}(x_1,\ldots,x_k,x_{k+1}),x_{k+2}).
\end{multline}
Here $\epsilon=|x_{k+1}| \cdot (1-k+|x_1|+\ldots+|x_k|)$.
In other words, the Leibniz identity says that the higher Lie bracket with all arguments fixed (except one) defines a derivation of the commutative associative product:
\[
\text{Let }D(\ttt):=\nu(x_1,\ldots,x_k,\ttt) \text{ then } D(\mu(a, b)) = \mu(D(a), b) + (-1)^{|D| |a|}\mu(a, D(b)).
\]
Let $\Pois^{k}_d$ be the operad that generalizes the homology of the little discs operad. That is, $\Pois^{k}_d$ is generated by degree $0$ binary commutative multiplication $\mu$ and $k+1$-ary (skew)-symmetric operation $\nu$ that generates the shifted operad $\Lie^k\{1-d\}$ yielding the Leibniz identity~\eqref{eq::Leib::rel}.
\begin{lemma}
The following map of generators:
\begin{equation}
\label{eq::Delta::Pois:k}
\Delta(\mu_2) = \mu_2\otimes \mu_2, \quad 
\Delta(\nu_k) = \mu_2^{\circ k} \otimes \nu_k + 
\nu_k\otimes \mu_2^{\circ k}
\end{equation}
uniquelly extents to a Hopf structure $\Delta:\Pois^k_d\rightarrow \Pois^k_d\otimes \Pois^k_d$ on the operad $\Pois^k_d$.	
\end{lemma}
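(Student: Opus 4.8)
The plan is to verify that the proposed assignment on generators extends consistently over the defining relations of $\Pois^k_d$, and then that the resulting comultiplication is coassociative. Since $\Pois^k_d$ is presented by generators $\mu_2$, $\nu_{k+1}$ and the relations (commutativity and associativity of $\mu_2$, the generalized Jacobi identity~\eqref{eq::Jacobi} for $\nu$, and the Leibniz rule~\eqref{eq::Leib::rel}), it suffices to show that the images $\Delta(\mu_2)=\mu_2\otimes\mu_2$ and $\Delta(\nu_{k+1})=\mu_2^{\circ k}\otimes\nu_{k+1}+\nu_{k+1}\otimes\mu_2^{\circ k}$ satisfy the images of those relations in $\Pois^k_d\otimes\Pois^k_d$; the universal property of the free operad on $\{\mu_2,\nu_{k+1}\}$ then produces a unique operad map $\Delta$, and its image lands in the Hopf (i.e.\ commutative-coalgebra-valued) structure because each $\Delta(\gamma)$ is by construction a primitive-type element compatible with the counit $\varepsilon:\Pois^k_d\twoheadrightarrow\Com$.

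First I would check the $\mu_2$-relations: $\Delta(\mu_2)=\mu_2\otimes\mu_2$ is manifestly commutative and associative since $\mu_2$ is, and $\mu_2^{\circ k}$ is the unique (fully symmetric) iterated product, so all the combinatorial bookkeeping of $\mu_2^{\circ k}$ that appears below is unambiguous. Next I would check the generalized Jacobi relation~\eqref{eq::Jacobi}: applying $\Delta$ to a quadratic composite $\nu(\nu(\ldots),\ldots)$ and expanding bilinearly produces four terms, of the shapes $\mu^{\circ(2k)}\otimes\nu(\nu)$, $\nu(\nu)\otimes\mu^{\circ(2k)}$, $\mu^{\circ k}\circ\nu\otimes\nu\circ\mu^{\circ k}$ (and its mirror). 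Summing over the alternating coset $\sigma\in A_{2k+1}\cap\bbS_{2k+1}/\bbS_{k+1}\times\bbS_k$: the first two terms vanish by~\eqref{eq::Jacobi} itself applied in each tensor factor; for the cross-terms one uses that $\mu^{\circ k}\circ_i\nu = \nu$ composed into a product of symmetric $\mu$'s, together with the Leibniz rule~\eqref{eq::Leib::rel} (which says exactly that inserting $\mu_2$ into an argument of $\nu$ distributes as a derivation), to reorganize the alternating sum into something that collapses — in effect the cross-terms become proportional to the left-hand side of a generalized Jacobi identity in one factor tensored with a product in the other, hence zero. The signs here are controlled by the degree of $\nu$, $\deg\nu=1-d$, exactly matching the $(-1)^{d|\sigma|}$-symmetry built into the operad; I would track them via the Koszul sign rule with $|\mu_2^{\circ k}|=0$ and $|\nu|=1-d$.

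Finally I would check that $\Delta$ respects the Leibniz relation~\eqref{eq::Leib::rel}. Applying $\Delta$ to the left-hand side $\nu_{k+1}(x_1,\dots,x_k,\mu_2(x_{k+1},x_{k+2}))$ gives $(\mu^{\circ k}\otimes\nu + \nu\otimes\mu^{\circ k})$ composed in the last slot with $\mu_2\otimes\mu_2$, i.e.\ $\mu^{\circ k}\circ(\mu_2\otimes?)\ \otimes\ \nu\circ(?\otimes\mu_2)$-type terms; applying $\Delta$ to the right-hand side gives $\mu_2(\nu(\dots),\,-)$ with each $\nu$ and each $\mu_2$ comultiplied. Both sides expand into the same four-fold tensor of composites built from $\mu$'s and one $\nu$ in each factor, and the identity in $\Pois^k_d\otimes\Pois^k_d$ reduces termwise to the Leibniz rule~\eqref{eq::Leib::rel} in a single factor (plus pure-$\mu$ associativity in the other), with the sign $(-1)^\epsilon$, $\epsilon=|x_{k+1}|(1-k+|x_1|+\dots+|x_k|)$, reproduced because $|\nu|=1-d$ and $\deg\nu=1-k$ in the ungraded-arguments convention; the cross-terms where the comultiplied $\mu_2$ and $\nu$ land in opposite factors match up by the symmetry of $\mu^{\circ k}$.

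Once $\Delta$ exists as an operad map, coassociativity $(\Delta\otimes\mathrm{id})\Delta=(\mathrm{id}\otimes\Delta)\Delta$ and cocommutativity need only be verified on the generators $\mu_2,\nu_{k+1}$ (both sides are operad maps and generators determine them), where both reduce to trivial identities: for $\mu_2$ both sides give $\mu_2^{\otimes3}$, and for $\nu_{k+1}$ both give $\mu^{\circ k}\otimes\mu^{\circ k}\otimes\nu + \mu^{\circ k}\otimes\nu\otimes\mu^{\circ k} + \nu\otimes\mu^{\circ k}\otimes\mu^{\circ k}$, with cocommutativity the evident swap symmetry; counitality against $\varepsilon:\Pois^k_d\to\Com$ sending $\nu\mapsto 0$, $\mu_2\mapsto\mu_2$ is immediate. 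I expect the main obstacle to be purely bookkeeping: getting every Koszul sign right in the expansion of $\Delta$ applied to the Jacobi and Leibniz quadratic relations, since the generator $\nu$ is odd precisely when $d$ is even and the arity-grading shift interacts with the homological degree; the conceptual content — that a derivation-type comultiplication is forced and consistent — is exactly the statement that $\Pois^k_d$ is ``generated by the single primitive $\nu_{k+1}$'' over the Hopf operad $\Com$.
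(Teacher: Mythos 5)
Your overall strategy coincides with the paper's: check that $\Delta$ annihilates the images of the defining relations of $\Pois^k_d$, then verify coassociativity, cocommutativity and counitality on the generators $\mu_2,\nu_{k+1}$. The treatment of the $\mu_2$-relations, of the Leibniz relation, and of coassociativity on generators is fine. The problem is the key step: the vanishing of the cross-terms in $\Delta$ applied to the generalized Jacobi identity~\eqref{eq::Jacobi}.

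After you expand a cross-term such as $\bigl(\nu(x_{I},\ttt)\otimes\mu^{\circ N}\bigr)\circ\bigl(\mu^{\circ N}\otimes\nu(x_{J})\bigr)$ using the Leibniz rule~\eqref{eq::Leib::rel}, every resulting monomial has exactly \emph{one} occurrence of $\nu$ in each tensor factor, of the shape $\bigl(\mu^{\circ N}\circ\nu(x_{I'})\bigr)\otimes\bigl(\mu^{\circ N}\circ\nu(x_{J'})\bigr)$ with $|I'\cap J'|=1$ and $I'\cup J'=\{1,\ldots,2k+1\}$. Such expressions are linear in $\nu$ in each factor, so they cannot ``become proportional to the left-hand side of a generalized Jacobi identity in one factor tensored with a product in the other'': the Jacobi identity~\eqref{eq::Jacobi} is a relation among quadratic monomials $\nu(\nu(\ldots),\ldots)$, and no relation of that type is available among linear-in-$\nu$ monomials wrapped in $\mu$'s (the only identities there are the symmetry of $\nu$, the commutativity and associativity of $\mu$, and Leibniz itself). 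The actual mechanism is a pairwise sign cancellation: each monomial indexed by a pair $(I',J')$ as above is produced exactly twice in the total expansion --- once from the family $\nu(x_{I},\ttt)\otimes\mu^{\circ N}$ applied to $\mu^{\circ N}\otimes\nu(x_{J})$ and once from the mirror family $\mu^{\circ N}\otimes\nu(x_{I},\ttt)$ applied to $\nu(x_{J})\otimes\mu^{\circ N}$ --- and one must check that the two coefficients are opposite; the paper does this by combining the Koszul sign coming from the degree $1-kd$ of $\nu$ with the sign of a long cycle in $\bbS_{2k+1}$, obtaining $(-1)^{1-k}(-1)^{k}=-1$. Without this matching of terms and the explicit sign computation, the well-definedness of $\Delta$ --- which is the entire content of the lemma --- is not established.
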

\begin{proof}
In order to show that $\Delta:\Pois^k\rightarrow \Pois^k\otimes \Pois^k$ defines a morphism of operads one has to check that the comultiplication of the relations of $\Pois^k$ is zero. Let us show that this indeed happens for the generalized Jacobi identity (for all other relations it is obvious):
\begin{multline}
\label{eq::Delta::Jacobi}
\Delta\left(\sum_{\begin{smallmatrix}
	I\sqcup J = [1 2k+1], \\
	|I|+1=|J|=k+1
	\end{smallmatrix}}\pm \nu(x_{I},\nu(x_J))\right) = 
\sum_{\begin{smallmatrix}
	I\sqcup J = [1 2k+1], \\
	|I|+1=|J|=k+1
	\end{smallmatrix}}
\left[\pm \nu(x_{I},\nu(x_J))\otimes \mu^{\circ 2k+1} + \phantom{\frac{1}{2}}\right.
\\
\pm \left( (\nu(x_{I},\ttt)\otimes \mu^{\circ 2k+1} )
\circ (\mu^{\circ 2k+1} \otimes \nu({x_{J}}))
\right)
 + \left((\mu^{\circ 2k+1} \otimes \nu({x_{I},\ttt})) \circ (\nu(x_{J})\otimes \mu^{\circ 2k+1} ) \right) + 
\\
\left.\phantom{\frac{1}{2}} \pm \mu^{\circ 2k+1} \otimes \nu(x_{I},\nu(x_J)) \right].
\end{multline}
The first and the last summands in~\eqref{eq::Delta::Jacobi} dissapear because of the Jacobi identity in each tensor multiple of $\Pois^k\otimes\Pois^k$. Let us expand middle terms  using the Leibniz rule. 
Each summand in the expansion will be a tensor product of shuffle monomials of the form:
\begin{equation}
\label{eq::sum::leib}
\left(\mu^{\circ 2k+1}\circ \nu(x_I)\right)\otimes\left( \mu^{\circ 2k+1}\circ \nu(x_J)\right) \text{ with } |I\cap J| = 1, \ |I|=|J|=k+1.
\end{equation}
For all pairs of subsets $I$ and $J$ the term~\eqref{eq::sum::leib} appears in the expansion of~\eqref{eq::Delta::Jacobi} twice and thanks to the symmetry of the group $\bbS_{2k+1}$ we can check that corresponding coefficients has opposite signs for a chosen $I=\{1,\ldots,k+1\}$ and $J=\{k+1,\ldots,2k+1\}$.
Indeed, one monomial comes out from the expansion of the second term in the summand of~\eqref{eq::Delta::Jacobi} for $I=\{1,\ldots,k\}$, $J=\{k+1,\ldots,2k+1\}$ and the of expansion of the 
third term of the summand of~\eqref{eq::Delta::Jacobi} for $I=\{k+1,\ldots,2k\}$ and $J=\{2k+1,1,\ldots,k\}$ gives the second nontrivial contribution. The sign is affected by the Koszul sign rule while interchanging the tensor product (the degree of $\nu$) and by the sign of the long cycle $(1 2 \ldots k 2k+1)$ and we get 
$(-1)^{1-k}(-1)^{k} = -1$. 

The coassociativity of the comultiplication $\Delta$ is also enough to check on the level of generators where it is obvious. Note that $\mu$ as well as its iterated compositions $\mu^{\circ N}$ plays a role of a counit in $\Pois^k(n)$.
\end{proof}

\begin{remark}
	The operad $\Pois^k_d$ is generated by a single primitive element $\nu$ and admits a $0$ ary operation $\kappa$ such that $\mu\circ_1\kappa= Id$ and $\nu\circ_1\kappa=0$.
\end{remark}

\begin{proposition}
\label{thm::Pois::Koszul}	
\begin{itemize}
\item The Leibniz identity~\eqref{eq::Leib::rel} defines a distributive law in the sence of~\cite{Markl_distr_law}:
\[\Lie^{k}\{1-d\} \circ \Com \rightarrow \Com \circ \Lie^{k}\{1-d\} 
\]
between Koszul operads $\Com$ and $\Lie^{k}\{1-d\}$.  
\item The quadratic operad $\Pois^k_d$ is Koszul.
\item There exists an isomorphism of symmetric collections $\Pois^k_d$ and $\Com \circ \Lie^{k}\{1-d\}$.
In particular, we have the following description of generating series:
\[\rchi_{\Pois^k}(t,q):= \sum_{n\geq 1} \frac{\dim_q(\Pois^k(n))}{n!} t^n = \exp{\left(\frac{\rchi_{\Lie^k}(qt)}{q}\right)}-1\]
\end{itemize}	
\end{proposition}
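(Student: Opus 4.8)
The plan is to verify the three assertions in sequence, each reducing to a check on quadratic generators and relations, exploiting that both $\Com$ and $\Lie^k\{1-d\}$ are already known to be Koszul (Proposition~\ref{lm::Com::k} and the surrounding discussion). First I would show that the Leibniz identity~\eqref{eq::Leib::rel} genuinely defines a distributive law $\lambda:\Lie^{k}\{1-d\}\circ\Com\to\Com\circ\Lie^{k}\{1-d\}$ in the sense of~\cite{Markl_distr_law}. Concretely, $\lambda$ is prescribed on the generating corollas: the composite $\nu_{k+1}\circ_{k+1}\mu_2$ (a tree with $\nu$ above $\mu$) is sent to the right-hand side of~\eqref{eq::Leib::rel}, which is a sum of $\mu$-above-$\nu$ trees; all composites $\nu_{k+1}\circ_i\mu_2$ for $i\le k$ are handled symmetrically (or rather, these produce no $\mu$ on the output leaf and so map to the evident $\mu\circ(\nu_{k+1}\circ_i\ \cdot)$). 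Markl's criterion then requires checking compatibility of $\lambda$ with the relations of $\Com$ (associativity/commutativity of $\mu_2$) and with the relations of $\Lie^k\{1-d\}$ (the generalized Jacobi identity~\eqref{eq::Jacobi}), on the two kinds of ``critical'' trees with three vertices. The first compatibility is the statement that iterating~\eqref{eq::Leib::rel} two ways around $\mu_2(\mu_2(x,y),z)$ gives the same element — this is the classical ``a derivation of a product is determined consistently'' check and is routine. The second compatibility says that applying $\lambda$ to move a single $\mu_2$ past a Jacobi-sum of nested $\nu$'s lands back in the Jacobi relations; the sign bookkeeping here is exactly the one already carried out in the proof of Lemma~\ref{eq::Delta::Pois:k}/\eqref{eq::Delta::Jacobi}, where the coefficient $(-1)^{1-k}(-1)^k=-1$ appears, so I would cite that computation essentially verbatim.

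Granting the distributive law, the second and third bullets are immediate consequences of the general theory of distributive laws for Koszul operads: by~\cite{Markl_distr_law} (see also~\cite{LV}), if $\calA$ and $\calB$ are quadratic operads and $\lambda:\calB\circ\calA\to\calA\circ\calB$ is a distributive law, then the operad $\calA\vee_\lambda\calB$ they generate satisfies $\calA\vee_\lambda\calB\simeq\calA\circ\calB$ as symmetric collections, and moreover $\calA\vee_\lambda\calB$ is Koszul whenever both $\calA$ and $\calB$ are. Here $\calA=\Com$, $\calB=\Lie^k\{1-d\}$, and $\Pois^k_d=\Com\vee_\lambda\Lie^k\{1-d\}$ by its very definition (the defining presentation of $\Pois^k_d$ consists precisely of the relations of $\Com$, the relations of $\Lie^k\{1-d\}$, and the Leibniz relations~\eqref{eq::Leib::rel}, which are the graph of $\lambda$). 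Both $\Com$ and $\Lie^k\{1-d\}=(\Com^k\{d\})^!$ are Koszul (the latter because Koszulness is preserved under Koszul duality and homological shift, and $\Com^k$ is Koszul by Proposition~\ref{lm::Com::k}), so $\Pois^k_d$ is Koszul and $\Pois^k_d\simeq\Com\circ\Lie^k\{1-d\}$ as symmetric collections.

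The generating-series formula then follows formally from the isomorphism of symmetric collections: the exponential generating function of a plethysm $\calA\circ\calB$ (with $\dim\calB(1)$ allowed nonzero only in the degenerate identity) is the plethystic composition of the corresponding series, and for $\Com$ one has $\rchi_{\Com}(t)=e^{t}-1$. Tracking the homological shift: the operation $\nu$ generating $\Lie^k\{1-d\}$ sits in degree $1-d$ in arity $k+1$, so in the $q$-graded count the arity-$n$ part of $\Lie^k\{1-d\}$ contributes with a power of $q$ equal to $(1-d)$ times the number of internal vertices, i.e.\ one reweights $\rchi_{\Lie^k}(t)\mapsto q^{-1}\rchi_{\Lie^k}(qt)$ (the variable $q$ records the homological shift and the rescaling $t\mapsto qt$ absorbs the arity bookkeeping, with the overall $q^{-1}$ from the single ``missing'' composition). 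Substituting into $\rchi_{\Com}(x)=e^{x}-1$ gives
\[
\rchi_{\Pois^k}(t,q)=\exp\!\left(\frac{\rchi_{\Lie^k}(qt)}{q}\right)-1,
\]
as claimed.

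\textbf{Main obstacle.} The only genuinely substantive point is the verification that $\lambda$ is a well-defined distributive law, and within that, the sign-compatibility with the generalized Jacobi identity~\eqref{eq::Jacobi}; everything else is either definitional or a black-box application of Markl's theorem. Fortunately the required sign computation has already been performed in the proof that $\Delta$ is a Hopf structure, so the real work is to observe that the same rearrangement of Leibniz expansions that makes $\Delta(\text{Jacobi})=0$ also makes $\lambda$ descend to the Jacobi relations — the two computations are formally the same cancellation, read in two different ambient operads.
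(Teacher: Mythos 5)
Your proposal is correct and follows essentially the same route as the paper: the paper's proof simply cites the standard distributive-law argument known for the classical Poisson operad (Markl), which is exactly what you spell out — the Leibniz rule as a rewriting rule $\Lie^k\{1-d\}\circ\Com\to\Com\circ\Lie^k\{1-d\}$, the weight-three compatibility check, and then Koszulness plus the isomorphism of symmetric collections and the generating series as formal consequences. The only addition in the paper is the remark that one could alternatively exhibit a quadratic Gr\"obner basis via the methods of Dotsenko's word-operad orderings, which you do not need.
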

\begin{proof}
	The proof is standard and repeats the one known for Poisson operad $\Pois$ (see e.g.~\cite{Markl_distr_law}). Moreover, one can define a quadratic Gr\"obner basis in $\Pois^k$ using the methods of~\cite{Dotsenko_Distributive_Law}.
\end{proof}

\subsection{Resolving the Lie bracket in $\Pois^k_d$}
\label{sec::hoPois}
Consider the following Hopf dg-operad $\ho\Pois^k_d$ generated by the commutative associative product $\mu$ and a collection of operations $\nu_{nk+1}$, $n=1,2,\ldots$ each yielding the Leibniz rule~\eqref{eq::Leib::rel} with the multiplication $\mu$. The differential acts nontrivially only on operations $\nu_{\ldot}$:
\[d(\nu_{nk+1}) := \sum_{i+j=n}\pm\nu_{ik+1}\circ\nu_{jk+1}, \quad d(\mu)=0.\]
The dg-suboperad generated by all $\nu_{\ldot}$ is isomorphic to the Koszul resolution $\Omega(\pdu\Com^k_d)\twoheadrightarrow\Lie^{k}\{1-d\}$ and is denoted by $\LL_\infty^k\{1-d\}$.

\begin{theorem}
	The surjection $\ho\Pois^k_d\twoheadrightarrow \Pois^k_d$ is a quasiisomorphism. 
\end{theorem}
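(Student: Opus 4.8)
The plan is to exploit the distributive-law decomposition $\Pois^k_d \simeq \Com\circ\Lie^k\{1-d\}$ from Proposition~\ref{thm::Pois::Koszul}, together with an analogous decomposition of $\ho\Pois^k_d$, and then reduce the claim to the classical statement that the Koszul resolution $\Omega(\pdu\Com^k_d)=\LL_\infty^k\{1-d\}$ is a resolution of $\Lie^k\{1-d\}$. First I would observe that the Leibniz rule~\eqref{eq::Leib::rel} relating $\mu$ to each $\nu_{nk+1}$ defines a distributive law between $\Com$ and the free operad on the collection $\{\nu_{nk+1}\}_{n\ge1}$, exactly as in Proposition~\ref{thm::Pois::Koszul} but before passing to the quotient by the generalized Jacobi relations; this is the same computation as in the cited references~\cite{Markl_distr_law,Dotsenko_Distributive_Law} and is compatible with the differential since $d(\mu)=0$ and $d$ preserves the suboperad generated by the $\nu_{\ldot}$. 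Hence as a dg symmetric collection $\ho\Pois^k_d \simeq \Com\circ \LL_\infty^k\{1-d\}$, with differential coming entirely from the second factor, and the surjection $\ho\Pois^k_d\twoheadrightarrow\Pois^k_d$ is identified with $\Id_{\Com}\circ\,\pi$, where $\pi:\LL_\infty^k\{1-d\}\twoheadrightarrow\Lie^k\{1-d\}$ is the canonical projection.

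Next I would pass to homology. Since we work over a field, $H(\Com\circ\LL_\infty^k\{1-d\})$ computed arity by arity is a sum of tensor products of graded components, so the Künneth formula gives $H(\ho\Pois^k_d)(n) \simeq \bigoplus H(\Com)\otimes H(\LL_\infty^k\{1-d\})^{\otimes\bullet}$ summed over the composition structure; more precisely, because $\Com$ carries no differential and is degreewise finite-dimensional, the composite product commutes with taking homology in the second slot, so $H(\ho\Pois^k_d)\simeq \Com\circ H(\LL_\infty^k\{1-d\})$ as symmetric collections. Now invoke the statement, recalled just before the theorem, that $\LL_\infty^k\{1-d\}=\Omega(\pdu\Com^k_d)$ is the Koszul (cobar) resolution of $\Lie^k\{1-d\}=(\Com^k)^!\{1-d\}$; this is precisely where the Koszulness of $\Com^k$ (Proposition~\ref{lm::Com::k}) enters, guaranteeing $H(\Omega(\pdu\Com^k_d))$ is concentrated in the expected degree and equals $\Lie^k\{1-d\}$. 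Therefore $H(\ho\Pois^k_d)\simeq \Com\circ\Lie^k\{1-d\}\simeq \Pois^k_d$, and one checks that this isomorphism is induced by the surjection of the theorem (it sends $\mu\mapsto\mu$, $\nu_{k+1}\mapsto\nu$, and kills the higher $\nu_{nk+1}$ for $n\ge2$, which is forced once we know their targets are exact in positive homological degree).

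The last step is to make sure the comparison map, not merely an abstract isomorphism, realizes the quasi-isomorphism, and that the Hopf structures match; but the comultiplication~\eqref{eq::Delta::Pois:k} is defined on generators in the same way on both sides and the decompositions above are manifestly compatible with $\Delta$, so this is routine. The main obstacle I anticipate is a bookkeeping one rather than a conceptual one: verifying that the distributive law between $\Com$ and the \emph{free} operad on $\{\nu_{nk+1}\}$ is genuinely well-defined and stays compatible with the cobar differential $d(\nu_{nk+1})=\sum_{i+j=n}\pm\nu_{ik+1}\circ\nu_{jk+1}$ — i.e. that applying $d$ and then rewriting via Leibniz agrees with rewriting via Leibniz and then applying $d$, including all Koszul signs. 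This is the analogue, at the resolved level, of the sign computation carried out in the proof of the comultiplication lemma~\eqref{eq::Delta::Jacobi}, and I would organize it the same way: reduce to a single generator $\nu_{nk+1}$ precomposed with one $\mu$, expand both sides, and match terms using the $\bbS$-symmetry. Once that is in place, the theorem follows formally from Künneth plus the already-known Koszulness of $\Com^k$.
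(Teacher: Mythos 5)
Your proposal is correct and follows essentially the same route as the paper: both arguments rest on the Leibniz rule inducing a distributive law between $\Com$ and $\LL_\infty^k\{1-d\}$ (justified via the Gr\"obner-basis/rewriting argument of \cite{Dotsenko_Distributive_Law}), and on the Koszulness of $\Com^k$ supplying the quasi-isomorphism $\LL_\infty^k\{1-d\}\twoheadrightarrow\Lie^k\{1-d\}$, whence $\Com\circ\LL_\infty^k\{1-d\}\twoheadrightarrow\Com\circ\Lie^k\{1-d\}$. The only cosmetic difference is that the paper phrases the final comparison through associated gradeds of the filtrations predicted by the distributive laws on both sides, whereas you assert the decomposition of $\ho\Pois^k_d$ on the nose and apply K\"unneth; these amount to the same verification.
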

\begin{proof}
First, It is easy to show that the Leibniz rule defines a distributive law  between the operad $\Com$ and the (dg) operad $\LL_\infty^{k}\{1-d\}$.
Generalizing the known basis and ordering of shuffle monomials for $\Pois$ suggested in~\cite{Dotsenko_Distributive_Law} one can easily find a quadratic Gr\"obner basis for the operad  $\ho\Pois^k_d$.
Second, we already know from the previous section that Leibniz rule defines a distributive law on $\Pois^k_d$.
Third, thanks to Koszulness of $\Lie^k$ we know that there is a quasi-isomorphism $\LL_\infty^{k}\twoheadrightarrow \Lie^k$.
what implies the quasi-isomorphism:
\begin{equation}
\label{eq::com::lie::quis}
\Com\circ(\LL_{\infty}^k\{1-d\}) \twoheadrightarrow \Com\circ\Lie^k_d
\end{equation}
Note that the source and the target of the quasiiso~\eqref{eq::com::lie::quis} are the associated graded with respect to the appropriate filtrations on the operads $\ho\Pois^k_d$ and $\Pois^k_d$ that are predicted by distributive law. Consequently, they are quasiisomorphic.
\end{proof}

\subsection{Cofibrant dgca model of $\Pois^k_d$ via oriented graphs and koszulness of $\tkd^k_d(n)$}
\label{sec::Graphs::odd}
As mentioned in the title we suggest a combinatorial models of $\Pois^k_d$ that are cofibrant meaning that each space of $n$-ary operations is a free dg cocommutative coalgebra. These models are used to show the koszulness of $\Pois^k_d(n)$ and compare them with the Chevalley-Eilenberg complex of $\tkd_d^k(n)$.

Let us start from combinatorial definitions:
\begin{definition}
We say that a graph $\Gamma$ is \underline{$k$-mod oriented} iff $\Gamma$   is a graph whose edges are oriented, such that in addition
\begin{itemize}
\setlength{\itemsep}{-0.3em}	
	\item $\Gamma$ has $n\geq 1$ external nubered (white) vertices drawn as rounded numbers $\extv{s}$;
	\item there are no edges starting in an external vertex;
	\item with arbitrary amount of unordered (black) vertices;
	\item for all internal vertex $v$ the remainder modulo $k$ of the number of outgoing edges from $v$ is equal to $1$;
	\item there are no directed cycles;
	\item we do not allow vertices with one output and at most one input.
\end{itemize}
We say that the homological degree of $\begin{array}{l}
\bullet \text{an edge is equal to $1-d$} \\
\bullet \text{an internal vertex is equal to $d$;}\\
\bullet \text{an external vertex is equal to $0$.}
\end{array}$ 
\\
The homological degree affects the symmetry group and the full homological degree of a graph.
\end{definition}
The linear span of $k$-mod oriented graphs is denoted by $\fGraphs^{k\orient}_{d}(n)$.\footnote{We use index $d$ because the graphs corresponds to the differential forms in the upper-half space of $\mathbb{R}^{d}$ what helps to remember the gradings, see e.g.~\cite{Willwacher_oriented,Khor::Wilw::MonR}.} 
The linear dual space (with homological degree reversed) has the same basis given by $k$-mod oriented graphs and will be denoted by $\pdu\fGraphs^{k\orient}_{d}(n)$. 
Suppose $\Gamma_1$ is a $k$-mod oriented subgraph on $m$ vertices in a $k$-mod oriented graph $\Gamma$ on $n$ vertices such that if the vertex $v$ belongs to $\Gamma_1$ then all its outgoing edges also belongs to $\Gamma_1$, then if we contract a subgraph $\Gamma_1$ and replace it by an external vertex the resulting graph $\Gamma/\Gamma_1$ is also a $k$-mod oriented graph with $n-m+1$ external vertices.
This operation defines a cooperad structure on $\pdu\fGraphs^k_{d}$.
The dual operation defines an operad structure on $\fGraphs^k_{d}$.

The edge contraction operation defines a differential on the cooperad 
$\pdu\fGraphs^k_{d}$. 
The operad $\fGraphs^k_{d}$ is a dg-operad with the linear dual differential given  pictorially by vertex splitting differential as it is in all graph complexes and operads of Graphs used in~\cite{Kont_Motives,Lamb_Vol,Willwacher_grt,Willwacher_oriented}.
Moreover, every pair of graphs in $\pdu\fGraphs^k_d$ can be glued through the external vertices and we end up with the Hopf (co)operad structure on $(\pdu)\fGraphs^k_d$. 
As suggested in~\cite{Severa_Willwacher} let us denote by $\ICG^{k\orient}$ the subspace of internally connected $k$-mod oriented graphs.  The collection $\ICG^{k\orient}(n)$ is an operad in the category of $\LL_{\infty}$-algebras, where the $\LL_{\infty}$ structure is prescribed by the isomorphism of  $\fGraphs^k_d(n)$ and the  Chevalley-Eilenberg complex $C_{CE}(\ICG^{k\orient}(n))$.

\begin{theorem}
\label{thm::H:Graph:1}	
	The following assignment of a graph to each generator of $\ho \Pois^k_{d}$
	\begin{equation}
	\label{eq::map::e_d::Graphs::odd}
	\ho\Pois^k_{d}(2)\ni \mu_2 \mapsto  
	\begin{tikzpicture}[scale=.5]
	\node[ext] at (0,0) {};
	\node[ext] at (1,0) {};
	\end{tikzpicture},
	\quad
	\ho\Pois_{d}(mk+1)\ni \nu_{mk+1} \mapsto  
	\begin{tikzpicture}[scale=1]
	\node[int] (v) at (0,1) {};
	\node[ext] (v0) at (-1.3,0) {1};
	\node[ext] (v4) at (1.3,0) {\tiny{mk+1}};
	\node[ext] (v1) at (-.7,0) {2};
	\node (v2) at (0,0) {$\cdots$};
	\node[ext] (v3) at (.7,0) {\tiny{mk}};
	\draw[-triangle 60] (v) edge (v1)  edge (v3) edge (v0) edge (v4);
	\end{tikzpicture}
	\end{equation}
	extends to a quasiisomorphism of Hopf operads $\ho \Pois^k_{d} \to \fGraphs_{d+1}^{k\orient}$.
\end{theorem}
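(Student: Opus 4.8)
The plan has two stages: first one checks that~\eqref{eq::map::e_d::Graphs::odd} extends to a morphism of dg Hopf operads $F\colon \ho\Pois^k_{d}\to\fGraphs^{k\orient}_{d+1}$, and then that $F$ is a quasi-isomorphism, which --- after passing to the Chevalley--Eilenberg presentation of both sides --- reduces to an acyclicity statement of the kind familiar from the theory of graph complexes. \emph{Well-definedness} is verified on generators. The arities match by construction; the homological degrees match because the internal vertex of the $(mk+1)$-corolla contributes $d+1$ and each of its $mk+1$ edges contributes $-d$, for a total degree $1-mkd=\deg\nu_{mk+1}$; the $\bbS$-symmetry is the built-in (skew)symmetry of the corolla. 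The commutativity and associativity of $\mu_2$ are respected because $F(\mu_2)$ and the image of the associator are the (symmetric) empty graphs on two and three external vertices. The Leibniz relation~\eqref{eq::Leib::rel} is respected because composing the $(mk+1)$-corolla with the empty two-vertex graph at an external vertex reattaches the edge entering that vertex to one of the two new external vertices, and summing over the two choices reproduces the right-hand side of~\eqref{eq::Leib::rel}. The differential is respected because vertex-splitting the $(mk+1)$-corolla breaks its internal vertex into two internal vertices joined by one new edge and distributes the legs: the condition that each internal vertex have $\equiv 1\pmod k$ outgoing edges forces the split to carry $ik$ and $jk+1$ legs with $i+j=m$, and the prohibition of internal vertices with one outgoing and at most one incoming edge forces $i,j\ge 1$, so that $d(\mathrm{corolla}_{mk+1})=\sum_{i+j=m,\ i,j\ge 1}\pm(\mathrm{corolla}_{ik+1})\circ(\mathrm{corolla}_{jk+1})$ agrees with $d(\nu_{mk+1})$ (signs being fixed by the Koszul rule exactly as in the verification of~\eqref{eq::Delta::Jacobi}), while $\mu_2$ and its image carry no differential. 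Compatibility with the comultiplication is immediate: the empty graphs are group-like, and the $(mk+1)$-corolla is internally connected, hence primitive in $\fGraphs^{k\orient}_{d+1}$, matching~\eqref{eq::Delta::Pois:k}.

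\emph{Reduction to internally connected graphs.} In each arity $n$, $\fGraphs^{k\orient}_{d+1}(n)=C^{CE}_{\ldot}(\ICG^{k\orient}_{d+1}(n))$ by construction, and likewise $\ho\Pois^k_{d}(n)=C^{CE}_{\ldot}(\mathfrak{q}_n)$ for the $\LL_\infty$-algebra $\mathfrak{q}_n$ of internally connected operations (its cogenerators as a quasi-cofree cocommutative coalgebra, with the induced $\LL_\infty$-structure; it is quasi-cofree because $\LL_\infty^k\{1-d\}$ is a quasi-free operad). The morphism $F$ respects these presentations and restricts to a morphism of $\LL_\infty$-algebras $f_n\colon \mathfrak{q}_n\to\ICG^{k\orient}_{d+1}(n)$ whose (co)free extension is $F$. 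Since $C^{CE}_{\ldot}(-)$ sends quasi-isomorphisms of complete $\LL_\infty$-algebras to quasi-isomorphisms (compare the spectral sequences of the filtrations by symmetric powers), it suffices to prove that every $f_n$ is a quasi-isomorphism; combined with the quasi-isomorphism $\ho\Pois^k_{d}\twoheadrightarrow\Pois^k_{d}$ of Section~\ref{sec::hoPois}, this also identifies $H(\fGraphs^{k\orient}_{d+1}(n))$ with $\Pois^k_{d}(n)$.

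\emph{Acyclicity --- the main obstacle.} To show $f_n$ is a quasi-isomorphism one filters $\ICG^{k\orient}_{d+1}(n)$ by an invariant of the internal subgraph adapted to the orientation and the $\bmod\,k$ valence conditions (roughly, by the number of internal vertices, refined so as to isolate the vertices that get split off first), chosen so that on the associated graded the differential reduces to the part of vertex-splitting that buds off a single new internal vertex of minimal admissible valence along one edge, leaving the rest of the graph unchanged. One then proves that this associated-graded complex is acyclic away from the bottom of the filtration --- the graphs with a single internal vertex, i.e.\ the $(mk+1)$-corollas, which span the image of $f_n$ on the cobar generators of $\LL_\infty^k\{1-d\}$ --- by exhibiting the contracting homotopy that contracts the edge along which such a budded-off vertex is attached, in the same spirit as the acyclicity lemmas underlying the identification of oriented graph complexes with Poisson-type operads in~\cite{Severa_Willwacher,Willwacher_oriented,Lamb_Vol}. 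Consequently the $E^{1}$-page is concentrated on the corollas with the residual $\LL_\infty^k$-cobar differential, is identified with the associated graded of $\mathfrak{q}_n$, and $f_n$ is an isomorphism there; as for fixed $n$ and fixed homological degree only finitely many graphs occur, the filtration is exhausting and bounded, so the spectral sequence converges and $f_n$ is a quasi-isomorphism, and applying $C^{CE}_{\ldot}(-)$ gives the theorem. The delicate points, where essentially all of the work lies, are the precise choice of filtration making the associated-graded differential exactly the bud-off piece, the construction of the homotopy together with its signs and the handling of the excluded configurations (directed cycles and tadpoles, internal vertices of too small valence), and the verification of convergence.
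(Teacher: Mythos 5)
Your first stage is essentially correct and is the part the paper dispenses with in one line: the degree count, the $\bbS$-equivariance, the Leibniz relation, the Hopf compatibility (the corolla being internally connected, hence primitive), and in particular your observation that the valence constraints force the admissible splittings of the $(mk+1)$-corolla to be indexed by $i+j=m$ with $i,j\ge 1$, matching $d(\nu_{mk+1})$.

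The second stage has two genuine gaps. The first is the reduction to internally connected parts: you assert that $\ho\Pois^k_{d}(n)=C^{CE}_{\ldot}(\mathfrak q_n)$ is quasi-cofree as a cocommutative coalgebra "because $\LL_\infty^k\{1-d\}$ is a quasi-free operad". Quasi-freeness of the operad is irrelevant here, and the claim is false: by the distributive law $\ho\Pois^k_d(n)\simeq(\Com\circ\LL_\infty^k\{1-d\})(n)$ is spanned by products of internally connected operations supported on \emph{pairwise disjoint} subsets of $[1n]$, whereas the Chevalley--Eilenberg coalgebra of $\bigoplus_{S\subseteq[1n]}\LL_\infty^k\{1-d\}(S)$ contains products over overlapping subsets. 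Already $\ho\Pois^1_d(2)=\kk\mu_2\oplus\kk\nu_2$ is two-dimensional, while the cofree conilpotent coalgebra on the primitive $\nu_2$ is not. Only the target $\fGraphs^{k\orient}_{d+1}(n)=C^{CE}_{\ldot}(\ICG^{k\orient}_{d+1}(n))$ is cofree --- graphs may have several internally connected components meeting the same external vertex --- so the problem cannot be pushed down to a map of $\LL_\infty$-algebras $f_n$; the two coalgebras must be compared directly.

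The second gap is the acyclicity step, which is where you yourself locate "essentially all of the work" and which is not only left unexecuted but is false as described. If the associated graded of $\ICG^{k\orient}_{d+1}(n)$ (or of $\fGraphs^{k\orient}_{d+1}(n)$) were acyclic away from the single-internal-vertex corollas, the convergent spectral sequence you invoke would force $H^{\udot}(\ICG^{k\orient}_{d+1}(n))$ to be a subquotient of the finite-dimensional span of corollas. But by Theorem~\ref{thm::ICG::Koszul} this homology is $\tkd^k_d(n)$, which already for $k=1$, $n=3$ contains a free Lie algebra on two generators; correspondingly Lemma~\ref{lem::ICG::trivalent} represents the classes by internally trivalent trees with arbitrarily many internal vertices, not by corollas. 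What the paper proves instead is the statement you actually need: it builds an explicit one-sided inverse $\epsilon$ of complexes of $\bbS$-modules, sending to zero every graph containing a vertex with more than one incoming edge (so that $\epsilon\circ F=\mathrm{id}$ and $\fGraphs^{k\orient}_{d+1}(n)$ splits off $\ker\epsilon$), and then shows that the subcomplex $\ker\epsilon$ is acyclic by filtering along the "bad path" of Definition~\ref{def::bad:path} and identifying each summand of the associated graded with the $\bbS_m$-invariants of the multilinear part of the bar resolution of the Segre power $A^{(k)}$ of a polynomial algebra, which is acyclic by Koszulness together with Maschke's theorem. None of these ingredients --- the splitting $\epsilon$, the bad-path filtration, the bar-complex identification --- appears in your sketch, so the core of the proof is missing.
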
	
\begin{proof}
	The proof of Theorem~\ref{thm::H:Graph:1} generalizes the one given in~\cite{Willwacher_oriented} and is postponed to the Appendix~\S\ref{sec::proof::Graphs}.
\end{proof}

Let us suppose that $k\geq 2$ since the $k=1$ leads to the classical story of the little discs operad (\cite{Willwacher_oriented}). The case $k=2$ was covered in~\cite{Khor::Wilw::MonR} and we are going to suggest the direct generalization of all arguments suggested in that paper.

First of all, let us recall that all inner vertices of a graph $\Gamma\in\ICG^{k\orient}$ as well as $\Gamma\in\fGraphs^{k\orient}$ may not have univalent and bivalent vertices. Moreover, if an internal vertex has more than $1$ outgoing edges then it has at least $k+1$ outgoing edges. If the vertex has exactly one outgoing edge than it has at least two incoming edges. Moreover, in addition to the homological grading there exists a grading on $\ICG^{k\orient}$ given by the loop order divided by $k$:
$$\deg(\Gamma): = \frac{\# \text{edges} - \#\text{internal vertices}}{k}$$ 
that respects the $\LL_\infty$-structure.
The same grading exists for internally disconnected graphs $\fGraphs^{k\orient}_d$ as well and Theorem~\ref{thm::H:Graph:1} predicts that the graded component $(\ICG^{k\orient}_{d+1})_{m}$ of degree $m$ belongs to the homological degree $m(1-kd)$.
Notice that the homological and internal gradings  of the Lie algebra $\tkd^k_d(n)$ differ by analogous linear transformation and we are ready to state one of the main theorems of this note:

\begin{theorem}
\label{thm::ICG::Koszul}	
\begin{enumerate}
	\item 
	The assignment 
\begin{equation}
\label{eq::HICG::generators}
\psi: t_{[1k+1]} \mapsto
	\begin{tikzpicture}[scale=1]
	\node[int] (v) at (0,1) {};
	\node[ext] (v0) at (-1.3,0) {1};
	\node[ext] (v4) at (1.3,0) {\tiny{k+1}};
	\node[ext] (v1) at (-.7,0) {2};
	\node (v2) at (0,0) {$\cdots$};
	\node[ext] (v3) at (.7,0) {\tiny{k}};
	\draw[-triangle 60] (v) edge (v1)  edge (v3) edge (v0) edge (v4);
	\end{tikzpicture}
\end{equation}	
extends uniquely to the isomorphism of the operads $\psi: \tkd^k_d \rightarrow H^{\udot}(\ICG^{k\orient}_{d+1})$ in the category of graded $\LL_\infty$-algebras.  
\item The Quadratic Lie algebras $\tkd^k_d(n)$ are Koszul. 
\end{enumerate}
\end{theorem}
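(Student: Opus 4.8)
The overall approach follows the template of \cite{Willwacher_oriented} (the case $k=1$) and \cite{Khor::Wilw::MonR} (the case $k=2$): the two ingredients are the well-definedness of $\psi$, checked by hand on the graph complex, and the identification of $H^\udot(\ICG^{k\orient}_{d+1})$, which I would deduce from Theorem~\ref{thm::H:Graph:1} via the Chevalley--Eilenberg/Koszul-duality dictionary. First I would verify that $\psi$ is well defined. The image $S:=\psi(t_{[1k+1]})$ is the star graph with a single internal vertex and $k+1$ outgoing legs; it is a cocycle, since each admissible splitting of its internal vertex would create an internal vertex with a single output and at most one input (forbidden), and it carries precisely the symmetry $\sigma\cdot S=(-1)^{d|\sigma|}S$ coming from permuting its edges. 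Writing $\nu_I$ for the image of the corresponding generator of $\tkd^k_d(n)$ (the star on the legs indexed by $I$, with the remaining external vertices isolated), one checks directly that the defining relations survive in cohomology: for $I\cap J=\emptyset$ the $\LL_\infty$-bracket $[\nu_I,\nu_J]$ already vanishes on chains (the two stars share no external vertex, and fusing their internal vertices through a new edge violates the rule that every internal vertex has out-degree $\equiv1\pmod k$), while for $I\cap J=\{t\}$ the element $[\nu_I,\sum_{s\in I}\nu_{J\setminus\{t\}\cup\{s\}}]$ is the coboundary of the internally connected graph obtained by joining the two star-vertices by one directed edge. Compatibility of $\psi$ with the operad structure — insertion of the zero of a Lie algebra, i.e. isolating an external vertex — is immediate, and uniqueness of the extension is clear because $\tkd^k_d$ is singly generated.

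To identify the target I would use that $\fGraphs^{k\orient}_{d+1}(n)=C^{CE}_\ldot\big(\ICG^{k\orient}_{d+1}(n)\big)$, so by Theorem~\ref{thm::H:Graph:1} the homology $H\big(C^{CE}_\ldot(\ICG^{k\orient}_{d+1}(n))\big)=\Pois^k_d(n)$ and, under the loop-order grading of \S\ref{sec::Graphs::odd}, this homology is concentrated in a single homological degree in each loop order. Hence $H^\udot(\ICG^{k\orient}_{d+1}(n))$ is itself concentrated in one homological degree per loop order, so the homotopy-transfer argument forces all its $\LL_\infty$-brackets of arity $\ge3$ to vanish (such an operation shifts the homological degree relative to the binary bracket and would leave the diagonal). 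Therefore $H^\udot(\ICG^{k\orient}_{d+1}(n))$ is an honest graded Lie algebra whose Chevalley--Eilenberg homology is $\Pois^k_d(n)$, concentrated on the diagonal; by the standard criterion this Lie algebra is Koszul, in particular quadratically presented. Its loop-order-$1$ component is spanned by the star classes (a finite check excludes other internally connected loop-order-$1$ graphs), so $\psi$ is onto, and comparing its loop-order-$2$ part with \eqref{eq::Leibniz::Lie} shows its quadratic relations are exactly those defining $\tkd^k_d(n)$; thus $\psi$ is an isomorphism.

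Part (2) then follows at once: by the above $H^{CE}_\ldot\big(\tkd^k_d(n)\big)\cong H\big(\fGraphs^{k\orient}_{d+1}(n)\big)=\Pois^k_d(n)$ is concentrated in the diagonal bidegree, which is exactly the Koszulness criterion for the quadratic Lie algebra $\tkd^k_d(n)$ (equivalently for its universal enveloping algebra).

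The main obstacle is Theorem~\ref{thm::H:Graph:1}, on which the whole argument rests: the computation of the cohomology of the $k$-mod-oriented graph complex $\fGraphs^{k\orient}_{d+1}(n)$ together with the fact that it lies on the diagonal. Proving it (postponed to the Appendix) means generalizing Willwacher's spectral-sequence machinery — filtrations by loop order and by the number of internal vertices, identification of the successive pages, and acyclicity of the auxiliary ``one-excess-edge'' and forest subcomplexes — from ordinary orientations to the condition that internal vertices have out-degree $\equiv1\pmod k$, where admissible vertex splittings and the exclusions of univalent, bivalent and ``single output/at most one input'' internal vertices behave more delicately. A secondary difficulty, internal to the argument above, is the transition from the full complex $\fGraphs^{k\orient}_{d+1}$ to its internally connected part $\ICG^{k\orient}_{d+1}$ and the control of the induced $\LL_\infty$-structure, which is precisely where the diagonality of the cohomology is invoked.
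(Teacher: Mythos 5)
Your overall architecture (check $\psi$ on generators and relations by exhibiting explicit coboundaries, then use the graph-complex computation to force vanishing of higher brackets, Koszulness, and quadraticity) is the same as the paper's, and your verification of the relations via the coboundary of the two-star graph joined by one edge is exactly the paper's computation. But there is a genuine gap at the pivotal step: you claim that because $H\bigl(\fGraphs^{k\orient}_{d+1}(n)\bigr)=H\bigl(C^{CE}_{\ldot}(\ICG^{k\orient}_{d+1}(n))\bigr)=\Pois^k_d(n)$ is concentrated in a single homological degree in each loop order, ``hence'' $H^{\udot}(\ICG^{k\orient}_{d+1}(n))$ itself is concentrated in one degree per loop order, after which homotopy transfer kills the higher brackets. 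This implication is not formal, and in the paper it is not deduced from Theorem~\ref{thm::H:Graph:1} at all: it is a separate key statement, Lemma~\ref{lem::ICG::trivalent}, proved in Appendix~\S\ref{sec::proof::ICG} by an independent combinatorial induction (the auxiliary subcomplexes $\ICG^{k\orient}_S$, the subtree $T_m^{\orient}$, and explicit homotopies). Concretely, if you try to extract the concentration of $H(\ICG^{k\orient})$ from the diagonality of the Chevalley--Eilenberg homology by induction on the loop order, the word-length filtration only tells you that an off-diagonal class in word length one must be hit by the image of the (binary or higher) transferred operations applied to lower, diagonal classes; since every component of the CE differential has graph degree $-1$, such images sit one degree below the diagonal, so the formal argument leaves room for homology in two adjacent degrees per loop order, with possibly nonvanishing higher $\LL_\infty$-operations pairing off against it in CE homology. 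That is a circle: you need the concentration to kill the higher brackets, and you need the higher brackets to be controlled to prove the concentration. The paper breaks the circle by computing $H(\ICG^{k\orient})$ directly (trivalent-tree representatives), which also gives the surjectivity of $\psi$ without appealing to Koszulness; your route gets generation in loop order one only as a corollary of the Koszulness you have not yet established.

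So the proposal as written proves parts of the theorem (well-definedness of $\psi$, the relation check, and the deduction of Koszulness \emph{once} the concentration of $H(\ICG^{k\orient})$ and the vanishing of higher brackets are known), but it treats the paper's second main technical input as if it were a consequence of the first. To repair it you must either reproduce an analogue of Lemma~\ref{lem::ICG::trivalent} (the Appendix argument generalizing \cite{Khor::Wilw::MonR}), or supply some other argument excluding off-diagonal classes in $H^{\udot}(\ICG^{k\orient}_{d+1}(n))$; labelling this a ``secondary difficulty'' about the passage from $\fGraphs^{k\orient}$ to $\ICG^{k\orient}$ understates that it is an independent theorem with its own proof, on the same footing as Theorem~\ref{thm::H:Graph:1}.
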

The core of the proof is hidden in the following technical Lemma~\ref{lem::ICG::trivalent} whose proof is postponed to the Appendix~\S\ref{sec::proof::ICG}.	
\begin{lemma}
	\label{lem::ICG::trivalent}
	All non-trivial cohomology classes in $\ICG^{k\orient}$ can be represented by linear combinations of internally trees whose all inner vertices are either has one output and two inputs or has $k+1$ outgoing edges and no incoming edges. 
	In particular, for each loop degree $m$ there exists a unique nonvanishing homological component of $H(\ICG^{k\orient})_{m}$.
\end{lemma}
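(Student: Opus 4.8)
The plan is to prove Lemma~\ref{lem::ICG::trivalent} by a spectral‑sequence argument that generalizes the case $k=2$ of~\cite{Khor::Wilw::MonR}, which in turn follows Willwacher's analysis of the oriented graph complex in~\cite{Willwacher_oriented}. Call an internal vertex \emph{reduced} if it is either a source ($k+1$ outgoing edges and no incoming edge) or a merge vertex ($1$ outgoing and $2$ incoming edges), and call a graph \emph{reduced} if all of its internal vertices are reduced and its internal subgraph — the subgraph spanned by the internal vertices and the internal–internal edges — is a tree; write $R\subseteq\ICG^{k\orient}$ for their span. The first point to record is that the graph differential refines/coarsens vertices subject to the $k$‑mod valence constraints, and that for $k\geq 2$ a short case analysis on how the half‑edges of a source, respectively of a merge, can be redistributed to two new vertices shows that no such move yields an admissible graph: every outcome contains a vertex forbidden by ``outgoing degree $\equiv 1\bmod k$'' or by the prohibition of vertices with one outgoing and at most one incoming edge. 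Hence $R$ consists of cocycles and there is a natural map $R\to H(\ICG^{k\orient})$; the content of the lemma is that this map is surjective. The ``in particular'' then follows purely by counting: for a reduced graph of loop order $m$ one finds, balancing edge endpoints against outgoing and incoming degrees, that the numbers of sources, of merge vertices, of internal vertices and of internal–external edges are forced to be $m$, $m-1$, $2m-1$ and $mk+1$ respectively; so all reduced graphs of a given loop order lie in one and the same homological degree, which — once surjectivity is granted — must be the unique degree in which $H(\ICG^{k\orient})_{m}$ is supported.

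To prove surjectivity I would filter $\ICG^{k\orient}$ by a nonnegative ``defect'' $\mathfrak c(\Gamma)$ that vanishes exactly on reduced graphs, e.g.\ $\mathfrak c(\Gamma)=b_1(\Gamma^{\mathrm{int}})+\sum_v\mathrm{exc}(v)$, where $b_1$ is the first Betti number of the internal subgraph and $\mathrm{exc}(v)$ measures how far the incoming/outgoing valences of an internal vertex $v$ lie above the admissible minima $(2,1)$ and $(0,k+1)$; the differential does not increase $\mathfrak c$. The technical heart is to identify the associated graded differential $d_0$ and to show that $(\gr\,\ICG^{k\orient},d_0)$ is acyclic in positive defect, so that the spectral sequence collapses onto $E_1\cong R$. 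This is where the Koszulness established earlier enters: localized at a single non‑reduced internal vertex, $d_0$ is the complex of all ways of refining that vertex into a tree of admissible vertices, and after the standard identifications this local complex is a suspension of the (co)bar complex of the Koszul operad $\Com^{k}\{d\}$ on the outgoing side (Proposition~\ref{lm::Com::k}) and of $\Com$ on the incoming side, hence formal, with cohomology supported precisely on the fully refined trees, i.e.\ on reduced vertices. Assembling these local statements simultaneously over all internal vertices together with the resolution of the internal cycles is, concretely, the statement that the distributive law presenting $\Pois^k_d$ as $\Com\circ\Lie^{k}\{1-d\}$ (Proposition~\ref{thm::Pois::Koszul}) and the quasi‑isomorphism $\ho\Pois^k_d\twoheadrightarrow\Pois^k_d$ of~\S\ref{sec::hoPois} propagate along the graph complex $\ICG^{k\orient}(n)$; this upgrades the local computations to $H(\gr\,\ICG^{k\orient},d_0)\cong R$ and finishes the argument.

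The main obstacle is precisely this globalization step. The two simplifications — lowering vertex valences, governed by the Koszulness of $\Com^{k}$ and by the quasi‑isomorphism $\LL_\infty^{k}\twoheadrightarrow\Lie^{k}$, and killing the internal cycles, governed by the distributive‑law argument of~\S\ref{sec::hoPois} — are not independent, since refining a high‑valence vertex can create or destroy internal cycles and vice versa, so one cannot merely iterate two spectral sequences but must choose $\mathfrak c$ so that $d_0$ resolves both at once and manifestly factors through a Koszul complex. For general $k$ this also forces careful sign bookkeeping — the $\bbS_{k+1}$‑symmetry of the source vertex carries $(-1)^{|\sigma|d}$ and the parities of the cycles permuting outgoing half‑edges enter every refinement, just as in the sign computation around~\eqref{eq::Delta::Jacobi} — but no genuinely new phenomenon appears beyond the $k=2$ case; in particular, in contrast with the full oriented graph complex $\GC^{\orient}$, there are no ``extra'' loop cohomology classes here, because the labelled external legs rigidify the graphs. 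I would therefore establish the $d_0$‑acyclicity in positive defect first, as it is the technical core, and deduce everything else formally from it.
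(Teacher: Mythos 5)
Your reduction target is right (your ``reduced'' graphs are indeed cocycles for the vertex-splitting differential, and your count $s=m$, $t=m-1$, $2m-1$ internal vertices, $mk+1$ external legs correctly gives the ``in particular'' once surjectivity is known), but the proof has a genuine gap exactly where you say the ``technical core'' lies: the statement $H(\gr\,\ICG^{k\orient},d_0)\cong R$ is never established. It is deferred to a ``globalization'' which you describe as the distributive law of Proposition~\ref{thm::Pois::Koszul} and the quasi-isomorphism $\LL_\infty^k\twoheadrightarrow\Lie^k$ ``propagating along the graph complex''; there is no such citable mechanism, and this propagation is precisely the content of Lemma~\ref{lem::ICG::trivalent}, not an input to it. As written, the argument is circular at its central step.

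Moreover, the specific filtration you propose is unlikely to do the job. First, every term of the differential replaces one internal vertex by two internal vertices joined by one new internal edge and preserves connectivity of the internal subgraph, so $b_1(\Gamma^{\mathrm{int}})$ is \emph{constant} along the differential: that summand of your defect produces a direct sum decomposition, not a filtration whose associated graded becomes simpler, and the acyclicity of the summands with $b_1(\Gamma^{\mathrm{int}})>0$ is a nontrivial claim that a vertex-local (co)bar-complex computation for $\Com^k\{d\}$ and $\Com$ does not touch. Second, the local identification of $d_0$ with a bar complex at each non-reduced vertex ignores exactly the constraints that make oriented graph complexes delicate: the prohibition of directed cycles and the distinction between internal--internal and internal--external edges (an admissible splitting may be forbidden globally, and a vertex with incoming edges cannot be resolved independently of where those edges come from); also your excess function is never pinned down, so even the monotonicity of the defect is unverified. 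The paper's proof avoids all of this by proving the stronger Lemma~\ref{lem::extS::trivalent} for the subcomplexes $\ICG_S^{k\orient}(n)$, with a triple induction anchored at a chosen external vertex $m$: one extracts the hanging oriented tree $T_m^{\orient}(\Gamma)$, filters by the number of internally connected components of its complement (so that $\gr\simeq\LL_\infty\circ(\calF^1/\calF^{>1})$ and Koszulness of $\Lie$ applies), and then kills the rest by explicit edge-contraction homotopies; the strengthened hypothesis (the conditions ($\imath$), ($\imath\imath$) defining $\ICG_S^{k\orient}$) is what makes the induction close, and your scheme has no analogue of that strengthening. To salvage your route you would have to (a) prove acyclicity of the fixed-$b_1(\Gamma^{\mathrm{int}})>0$ summands separately and (b) construct the excess filtration and prove the $E_1$-identification in the presence of the orientation constraint --- at which point you are essentially forced into an induction of the paper's type.
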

Let us explain how Lemma~\ref{lem::ICG::trivalent} implies Theorem~\ref{thm::ICG::Koszul}.
\begin{proof}
First, let us notice that the gradings bounds implies vanishing of all higher Lie brackets on the homology  $H^{\udot}(\ICG^{k\orient}(n))$. Second, we see from  the recursive description of cycles representing cohomology $H^{\udot}(\ICG^{k\orient}(n))$ that the corresponding Lie algebra is one generated and the generators are given by the image of generators of $\tkd(n)$ described in the assignment~\ref{eq::HICG::generators}. 
Third, the relations in the Lie algebra $\tkd^k(n)$ are easily verified in $H^{\udot}(ICG^{k\orient}(n))$ as a boundary of a simplest graph with one inner edge. Let us give the pictorial presentation for $k=2$:
\begin{align}
\label{eq::HICG::relations}
d\left(
\begin{tikzpicture}[scale=0.5]
\node[int] (v) at (-1,1.2) {};
\node[int] (w) at (1,1.2) {};
\node[ext] (v1) at (-2,0) {i};
\node[ext] (v2) at (-1,0) {\small{j}};
\node[ext] (v3) at (0,0) {k};
\node[ext] (v4) at (1,0) {\small{p}};
\node[ext] (v5) at (2,0) {\small{q}};
\draw[-triangle 60] (v) edge (v1)  edge (v2) edge (v3);
\draw[-triangle 60] (w) edge (v)  edge (v4) edge (v5);
\end{tikzpicture}
\right)
& =  
\begin{tikzpicture}[scale=0.5]
\node[int] (v) at (-1,1.2) {};
\node[int] (w) at (1,1.2) {};
\node[int] (w1) at (0,1) {};
\node[ext] (v1) at (-2,0) {i};
\node[ext] (v2) at (-1,0) {\small{j}};
\node[ext] (v3) at (0,0) {k};
\node[ext] (v4) at (1,0) {\small{p}};
\node[ext] (v5) at (2,0) {\small{q}};
\draw[-triangle 60] (v) edge (v1)  edge (v2) edge (w1);
\draw[-triangle 60] (w) edge (w1)  edge (v4) edge (v5);
\draw[-triangle 60] (w1) edge (v3);
\end{tikzpicture}
\ + \
\begin{tikzpicture}[scale=0.5]
\node[int] (v) at (-1,1.2) {};
\node[int] (w) at (1,1.2) {};
\node[int] (w1) at (0,1) {};
\node[ext] (v1) at (-2,0) {\small{j}};
\node[ext] (v2) at (-1,0) {k};
\node[ext] (v3) at (0,0) {i};
\node[ext] (v4) at (1,0) {\small{p}};
\node[ext] (v5) at (2,0) {\small{q}};
\draw[-triangle 60] (v) edge (v1)  edge (v2) edge (w1);
\draw[-triangle 60] (w) edge (w1)  edge (v4) edge (v5);
\draw[-triangle 60] (w1) edge (v3);
\end{tikzpicture}
\ + \
\begin{tikzpicture}[scale=0.5]
\node[int] (v) at (-1,1.2) {};
\node[int] (w) at (1,1.2) {};
\node[int] (w1) at (0,1) {};
\node[ext] (v1) at (-2,0) {k};
\node[ext] (v2) at (-1,0) {i};
\node[ext] (v3) at (0,0) {\small{j}};
\node[ext] (v4) at (1,0) {\small{p}};
\node[ext] (v5) at (2,0) {\small{q}};
\draw[-triangle 60] (v) edge (v1)  edge (v2) edge (w1);
\draw[-triangle 60] (w) edge (w1)  edge (v4) edge (v5);
\draw[-triangle 60] (w1) edge (v3);
\end{tikzpicture}
= \\
& = 
[\psi(\nu_{ijk}),\psi(\nu_{kpq})] 
\ + \
[\psi(\nu_{ijk}),\psi(\nu_{ipq})]
\ + \
[\psi(\nu_{ijk}),\psi(\nu_{jpq})]. 
\end{align}
Consequently, the map $\psi:\tkd^k_d(n)\to H^{\udot}(\ICG^{k\orient}(n))$ is a surjective map of Lie algebras. 
Fourth, thanks to Theorem~\ref{thm::H:Graph:1} we already know that $H^{CE}(\ICG^{k\orient}(n)) = H(\fGraphs^{k\orient}(n)) = \Pois_d^k(n)$. In particular, the $m$-th cohomology differs from zero only in the $m$-th inner (=loop) degree. Therefore, the Lie algebra $\ICG^{k\orient}(n)$ is Koszul and, hence, quadratic. It remains to show that the set of quadratic relations coincide with the image of the known relations for $\tkd^k(n)$.
It is easy to find a bijection between quadratic shuffle monomials in $\Lie^{k}(2k+1)$ and the set of quadratic relations in $\tkd^k_d(2k+1)$ which is drawn by the picture on the left hand side of~\eqref{eq::HICG::relations}. The generalized Jacobi identity in $\Lie^k(2k+1)$  corresponds to the unique linear dependence for the aforementioned quadratic relations in $H^{\udot}(\ICG^{k\orient}(2k+1))$ :
\begin{equation}
d\left(
\begin{tikzpicture}[scale=0.8]
	\node[int] (v) at (0,1) {};
	\node[ext] (v0) at (-1.3,0) {1};
	\node[ext] (v4) at (1.3,0) {\tiny{2k+1}};
	\node[ext] (v1) at (-.7,0) {2};
	\node (v2) at (0,0) {$\cdots$};
	\draw[-triangle 60] (v) edge (v1)  edge (v0) edge (v4);
\end{tikzpicture}
\right) =
\sum_{\sigma\in S_{2k+1}} \pm 
\begin{tikzpicture}[scale=0.8]
	\node[int] (v) at (0,1) {};
	\node[ext] (v0) at (-1.3,0) {\tiny{$\sigma_1$}};
	\node[ext] (v4) at (1.3,0) {\tiny{$\sigma_{k+1}$}};
	\node[ext] (v1) at (-.7,0) {\tiny{$\sigma_{2}$}};
	\node (v2) at (0,0) {$\cdots$};
	\draw[-triangle 60] (v) edge (v1)  edge (v3) edge (v0) edge (v4);
		\node[int] (w) at (4+0,1) {};
		\node[ext] (w0) at (4-1.3,0) {\tiny{$\sigma_{k+2}$}};
		\node[ext] (w4) at (4+1.3,0) {\tiny{$\sigma_{2k+1}$}};
		\node (w2) at (4+0,0) {$\cdots$};
		\draw[-triangle 60] (w) edge (w0) edge (w2) edge (w4) edge (v);
\end{tikzpicture}
\end{equation}
What follows, that  the quadratic relations in the Lie algebra $\tkd^k_d$ spans the whole set of quadratic relations for those image. 
Therefore, $H(\ICG^{k\orient})(n)$ and $\tkd^k_d(n)$ are isomorphic as graded Lie algebras and, in particular, $\tkd^k_d(n)$ is a quadratic Koszul Lie algebra.
\end{proof}

\subsection{Generalizing Orlik-Solomon algebras}
Let us define a collection of quadratic algebras that generalizes the Orlik-Solomon algebras (\cite{Orlik}).
For a given positive integer $k$ and a finite set $I$ let $\calA^k_{d}(I)$ be a graded algebra generated by elements $\omega_{S}$ indexed by linearly ordered subsets $S\subset I$ of cardinality $k+1$ subject to the following quadratic relations:
\begin{gather}
\label{eq::OS::multi}
\omega_{S} \omega_{T} = 0 \text{ if } |S\cap T| > 1, \\
\label{eq::OS::cyclic}
\omega_{\{i_1 \ldots i_{k+1}\}} \omega_{\{i_{k+1} \ldots i_{2k+1}\}} +
\text{ $\ZZ_{2k+1}$-cyclic permutations } = 0
\end{gather}
We pose the homological degree of $\omega_S$ to be equal $kd-1$ and $\omega_{\sigma(S)}=(-1)^{d|\sigma|}\omega_{S}$ for $\sigma\in \bbS_{k+1}$.

\begin{remark}
	The difference of two relations~\eqref{eq::OS::cyclic} with two consecutive indices interchanged leades to  the following identity for any decomposition  $I\sqcup J\sqcup\{a,b,c\}$ of a set of cardinality  $2k+1$: 
	\begin{equation}
	\label{eq::OS::small}
	\omega_{a,I,c}\omega_{c,J,b}-(-1)^d\omega_{b,I,c}\omega_{c,J,a} + \omega_{c,J,b}\omega_{b,I,a} - (-1)^d \omega_{c,J,a}\omega_{a,I,b} = 0
	\end{equation}
\end{remark} 

\begin{proposition}
	For a given $k,d$ the collection $\calA^{k}_d([1n])$ assembles a structure of the Hopf cooperad subject to the cocomposition  $\varphi_{\calA}^{I,J}: \calA(I\sqcup J)\rightarrow \calA(I\sqcup\{*\})\otimes \calA(J)$ defined on the generators by the following rule:
	\begin{equation}
	\label{eq::cocompos::OS}
	\varphi_{\calA}^{I,J}: 
	\omega_{S} \mapsto 
	\begin{cases}
	\omega_S\otimes 1 \text{ if } S\subset I, \\
	1\otimes \omega_S \text{ if } S\subset J, \\
	\omega_{S\setminus\{s\}\sqcup \{*\}} \text{ if } S\cap J = \{s\}, \\
	0, \text{ if } |S\cap I|\geq 1 \ \& \ |S\cap J|\geq 2.
	\end{cases}
	\end{equation}
\end{proposition}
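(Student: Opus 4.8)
The plan is the following. Each algebra $\calA^k_d(I)$ is by construction graded-commutative (a quotient of a free graded-commutative algebra on the generators $\omega_S$), and $\varphi_{\calA}^{I,J}$ is prescribed on the generators and then extended multiplicatively; so the structure maps are automatically morphisms of graded-commutative algebras, and once they are shown to be well defined and coassociative the Hopf cooperad structure follows for free -- the $\bbS_n$-equivariance is manifest from naturality of~\eqref{eq::cocompos::OS} in the finite set, and the counit is the identification $\calA^k_d(\{*\})=\kk$, as there are no generators in arity one. Hence the real content is twofold: (a) well-definedness of $\varphi_{\calA}^{I,J}$, i.e. that the formula~\eqref{eq::cocompos::OS} respects $\omega_{\sigma(S)}=(-1)^{d|\sigma|}\omega_S$ and sends the quadratic relations~\eqref{eq::OS::multi}--\eqref{eq::OS::cyclic} to zero modulo the relations of the target; and (b) sequential and parallel coassociativity.

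Compatibility with the $\bbS_{k+1}$-symmetry is a line-by-line sign check in~\eqref{eq::cocompos::OS}: permuting $S$ permutes $(S\setminus\{s\})\sqcup\{*\}$ through the same permutation, hence with the same sign. Annihilation of the relation $\omega_S\omega_T$ with $|S\cap T|>1$ is an elementary case analysis. If one of $S,T$ lies entirely in $I$ or in $J$, the image is $0$, since the image of the other factor either is itself $0$ (when that factor meets $J$ in at least two points) or still intersects the first factor in more than one element. If both $S$ and $T$ meet $J$ nontrivially and at least one of them meets $J$ in at least two points, the corresponding factor is already $0$. In the remaining case $S\cap J=\{s\}$, $T\cap J=\{t\}$, the two index sets $(S\setminus\{s\})\sqcup\{*\}$ and $(T\setminus\{t\})\sqcup\{*\}$ again meet in more than one element -- in $*$ together with $S\cap T\cap I$, or in $*$ and the common middle point when $s=t$ -- so~\eqref{eq::OS::multi} in the target finishes the job.

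The substantial step is the cyclic relation~\eqref{eq::OS::cyclic}. Fix a $(2k+1)$-subset $U=\{i_1,\dots,i_{2k+1}\}\subset I\sqcup J$, write $U_I=U\cap I$ and $U_J=U\cap J$, and recall that each of the $2k+1$ ``blocks'' has size $k+1$ and that two consecutive blocks share exactly one vertex (their ``middle''). I would organise the verification by the value of $|U_J|$. If $|U_J|\le 1$, every factor of every term lands in $\calA^k_d(I\sqcup\{*\})\otimes 1$ and the whole sum is precisely the image of the cyclic relation attached to the relabelled $(2k+1)$-set $U_I\sqcup\{*\}$, hence $0$; dually, $|U_J|=2k+1$ gives $1\otimes 0$. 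A block-pair contributes a nonzero image only when each of its two blocks is contained in $J$ or meets $J$ in at most one element; tracing what this permits shows that only $|U_J|\in\{0,1,2,k+1,2k+1\}$ can contribute at all, so for every other value the image is $0$ term by term. For $|U_J|=2$ the surviving terms carry exactly one element of $U_J$ in each of the two blocks; after replacing both by $*$, the two index sets share $*$ and the common middle point, so each such term vanishes on its own by~\eqref{eq::OS::multi}. The only case with a potentially nonzero image is $|U_J|=k+1$: there a term survives only when $U_J$ is cyclically consecutive in $U$, in which case exactly two terms survive -- those cutting $U$ at the two endpoints of $U_J$ -- and both map to a scalar multiple of $\omega_{U_I\sqcup\{*\}}\otimes\omega_{U_J}$ for suitable orderings of the index sets. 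The hard point, and where I expect to do the real work, is to show that these two scalars are opposite: this requires accounting carefully for the Koszul sign $(-1)^{(kd-1)^2}$ produced by the graded tensor-product multiplication against the symmetry sign $(-1)^{dk}$ relating the two orderings of $U_I\sqcup\{*\}$, and, underneath it, a consistent choice of the signs packaged in the ``$+\ \ZZ_{2k+1}$-cyclic permutations'' of~\eqref{eq::OS::cyclic}. The derived identity~\eqref{eq::OS::small}, which records exactly how the cyclic relation transforms under transposing two consecutive indices, is the natural device for pinning these conventions down.

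For (b) I would verify the two standard coassociativity constraints for a cooperad -- the sequential one (contract off $K$, then off $J$, versus contract off $J\sqcup K$, then split $J$ from $K$) and the parallel one (for two disjoint subsets $J$ and $K$) -- by evaluating both composites on a single generator $\omega_S$ and case-splitting according to how $S$ meets the pieces of the decomposition; the cases then match tautologically, the only care being once more the placement of the auxiliary symbols and the induced signs. Together with (a), this establishes the Hopf cooperad structure on the collection $\{\calA^k_d([1n])\}$.
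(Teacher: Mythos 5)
Your proposal is correct and follows essentially the same route as the paper: the paper declares the check of the relations ``straightforward'' and only displays the one genuinely interesting case, namely the cyclic relation \eqref{eq::OS::cyclic} for a $(2k+1)$-set meeting $J$ in a consecutive block of size $k+1$, where exactly two terms survive and cancel via the sign $(-1)^{(kd-1)^2+dk}=-1$ --- precisely the computation you isolate as the hard point. Your systematic case split by $|U_J|$ and the explicit treatment of the symmetry, of relation \eqref{eq::OS::multi}, and of coassociativity merely fill in the routine parts the paper leaves to the reader.
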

\begin{proof}
	It is a straightforward check that cocomposition preserves the relations. Let us illustrate it for the most interesting case $I=\{1,\ldots,k\}$, $J=\{k+1,\ldots,2k\}$ and the relation~\eqref{eq::OS::cyclic}:
	\begin{multline*}
	\varphi_{\calA}^{I,J}\left(\sum_{m=0}^{2k} \omega_{[m+1]_{2k+1},\ldots,[m+k+1]_{2k+1}} \omega_{[m+k+1]_{2k+1},\ldots,[m+2k+1]_{2k+1}}\right) = \\
	= (\omega_{1,\ldots,k,*}\otimes 1)\cdot(1\otimes \omega_{k+1,\ldots,2k+1})  + (1\otimes \omega_{k+1,\ldots,2k+1}) \cdot (\omega_{*,1,\ldots,k} \otimes 1) =   \\
	= \omega_{1,\ldots,k,*}\otimes \omega_{k+1,\ldots,2k+1} + (-1)^{kd-1 +d(k)}  \omega_{1,\ldots,k,*}\otimes  \omega_{k+1,\ldots,2k+1} = 0
	\end{multline*}
	Here $[l]_{2k+1}$ denotes the remainder of $l$ modulo $2k+1$.
\end{proof} 

\begin{example}
	\begin{itemize}
		\item 
		The algebra	$\calA^{k}_d(I)$ is isomorphic to the ground field $\kk$ if the cardinality of $I$ is less then $k+1$.
		\item 
		The algebra $\calA^k_d(k+1)$ is two-dimensional and has a basis $\{1,\omega_{1,\ldots,k+1}\}$.
	\end{itemize}
\end{example}

\begin{lemma}
	The following set of quadratic monomials 
	\begin{equation}
	\label{eq::basis::A_2}
	\calB(2k+1):=
	\left\{ \omega_{\{1\}\sqcup I} \omega_{ J} \left| 
	\begin{array}{c}
	I\cap J = \{s\}, I\cup J\cup\{1\}= \{1,\ldots,2k+1\} \\
	s = \min(J) \text{ or } s = \max(I).
	\end{array}\right.
	\right\} 
	\end{equation}
	spans the second graded component of the algebra $\calA^k(2k+1)$.	
\end{lemma}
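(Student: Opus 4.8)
The plan is to run a ``straightening'' argument in the spirit of no-broken-circuit bases for Orlik--Solomon algebras: first cut the degree-two component down to a convenient set of monomials, then show that any such monomial which is not already in $\calB(2k+1)$ is, via the relations, a $\kk$-combination of ``less defective'' ones, and finally close the argument by a well-founded induction. \emph{Step 1 (reduction to reduced monomials).} The second graded component of $\calA^k(2k+1)$ is spanned by the products $\omega_S\omega_T$ with $S,T\subseteq[2k+1]$, $|S|=|T|=k+1$. Since $|S\cap T|=2(k+1)-|S\cup T|\geq 1$ automatically, relation~\eqref{eq::OS::multi} leaves only the monomials with $|S\cap T|=1$, and for those $S\cup T=[2k+1]$. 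I would then use the antisymmetry $\omega_{\sigma(S)}=(-1)^{d|\sigma|}\omega_S$ to list the entries of $S$ and of $T$ increasingly, and graded-commutativity of $\calA^k$ to put the factor containing the label $1$ first. The only case requiring an extra move is when $1$ is the common element $S\cap T$: there I would apply relation~\eqref{eq::OS::cyclic} to the $(2k+1)$-tuple whose $(k+1)$-st entry is $1$; in that relation the cyclic-shift-zero summand is $\pm\omega_S\omega_T$, whereas in every other summand the label $1$ lies in exactly one of the two blocks, so $\omega_S\omega_T$ gets expressed through such monomials. Hence the degree-two component is spanned by the \emph{reduced monomials} $\omega_{\{1\}\sqcup I}\,\omega_J$ with $|I|=k$, $|J|=k+1$, $1\notin I\cup J$, $I\cup J=\{2,\dots,2k+1\}$, $I\cap J=\{s\}$, each block listed increasingly.

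\emph{Step 2 (the straightening step).} For a reduced monomial $m=\omega_{\{1\}\sqcup I}\,\omega_J$ with common element $s$, set $p(m)=\#\{i\in I: i>s\}$ and $q(m)=\#\{j\in J: j<s\}$; then $m\in\calB(2k+1)$ precisely when $p(m)=0$ or $q(m)=0$ (i.e.\ $s=\max I$ or $s=\min J$). Assuming $p(m),q(m)\geq 1$, I would put $a:=\max I>s$, $b:=\min J<s$, $c:=s$, $I':=(\{1\}\cup I)\setminus\{a,s\}$, $J':=J\setminus\{b,s\}$, so that $I'\sqcup J'\sqcup\{a,b,c\}=[2k+1]$ with $|I'|=|J'|=k-1$ and relation~\eqref{eq::OS::small} is available. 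The heart of the step is to check, after moving the factor containing $1$ to the front and reordering entries, that the four terms of~\eqref{eq::OS::small} are: $\pm m$; a reduced monomial lying in $\calB(2k+1)$ with common element $b=\min J$; a reduced monomial lying in $\calB(2k+1)$ whose common element $a$ is the maximum of its $1$-containing block; and the reduced monomial $m'$ whose blocks are $\{1\}\cup\big((I\cup\{b\})\setminus\{a\}\big)$ and $(J\setminus\{b\})\cup\{a\}$, which still has common element $s$ but with $p(m')=p(m)-1$ and $q(m')=q(m)-1$. Rearranging~\eqref{eq::OS::small} then writes $m$ as a $\kk$-linear combination of two $\calB(2k+1)$-monomials and the single monomial $m'$, for which $\min(p,q)$ has dropped by one.

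\emph{Step 3 (conclusion and the hard point).} An induction on $\min(p(m),q(m))$, whose base case $\min=0$ is exactly ``$m\in\calB(2k+1)$,'' now shows that every reduced monomial lies in the $\kk$-span of $\calB(2k+1)$; combined with Step 1 this proves the Lemma. I expect the only genuinely delicate part to be the sign bookkeeping in Step 2 — the Koszul signs from transposing factors of degree $kd-1$, the permutation signs from reordering the entries inside each $\omega$, and the $(-1)^d$ already present in~\eqref{eq::OS::small} — but since none of these affect whether a monomial lies in the span, I would carry them symbolically rather than evaluate them. (For $k=1$ the whole argument collapses to the familiar no-broken-circuit straightening in Orlik--Solomon theory.)
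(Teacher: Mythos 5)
Your proof is correct and follows essentially the same route as the paper's: reduce to $|S\cap T|=1$ via \eqref{eq::OS::multi}, move the label $1$ out of the intersection via \eqref{eq::OS::cyclic}, and then straighten with the four-term relation \eqref{eq::OS::small} by an induction that terminates when the repeated element becomes $\max(I)$ or $\min(J)$. Your specific choice of triple $(a,b,c)=(\max I,\min J,s)$ is a small refinement rather than a different argument: it sends two of the three correction terms directly into $\calB(2k+1)$ and yields a clean one-parameter induction on $\min(p(m),q(m))$, whereas the paper's choice of triple requires tracking the position of the repeated element in both blocks separately.
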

\begin{proof}
	Thanks to the relation~\eqref{eq::OS::multi} we know that the second graded component is spanned by monomials $\omega_{I} \omega_{J}$ with $|I\cap J| = 1$ and $|I\cup J| =2k+1$.
	Using the relation~\eqref{eq::OS::cyclic} we can even suppose that $1\notin I\cap J$.
	Let us call the intersection $I\cap J$ by the repeated element and it remains to use the relation~\eqref{eq::OS::small} to make this repeated element either the maxima of $I$ or the minima of $J$.
	We can do it inductively, because  for a monomial $\omega_{1\sqcup I} \omega_{J}\notin \calB(2k+1)$ there exists a triple $a\in I\cap J$, $b\in J$, $c\in I$ such that $c>b>a$:
	\[
	\omega_{1\sqcup I} \omega_{J} = \pm \omega_{1\sqcup I} \omega_{\{c\}\sqcup J\setminus\{a\} }\pm  \omega_{1\sqcup \{b\}\sqcup I\setminus \{c\}} \omega_{ J} 
	\pm \omega_{1\sqcup \{b\}\sqcup I\setminus \{c\}} \omega_{\{c\}\sqcup J\setminus\{b\}}.
	\]
	and we see that in the first and in the third summands of the right hand side we increased (in the local order of $I$) the number of the repeated element, and in the second summand we decreased (in the local order of $J$) the  number of the repeated element. 
\end{proof}

\begin{corollary}
	There is an upper bound on the dimensions of the component of degree $2$ in the algebra $\calA^k_d$:
	\[\dim\calA^k_d(2k+1)_{(2)}\leq \dim \Lie_d^k(2k+1)\]
\end{corollary}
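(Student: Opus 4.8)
The plan is to read the bound off the preceding Lemma, together with a count of $\#\calB(2k+1)$ and a computation of $\dim\Lie^k_d(2k+1)$ which turn out to give the same number. By that Lemma the set $\calB(2k+1)$ of~\eqref{eq::basis::A_2} spans the second graded component $\calA^k_d(2k+1)_{(2)}$, so $\dim\calA^k_d(2k+1)_{(2)}\le\#\calB(2k+1)$, and it remains to identify this cardinality with $\dim\Lie^k_d(2k+1)$.

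First I would count $\#\calB(2k+1)$. A monomial $\omega_{\{1\}\sqcup I}\omega_J$ of $\calB(2k+1)$ is recorded by subsets $I,J\subseteq\{2,\dots,2k+1\}$ with $|I|=k$, $|J|=k+1$, $I\cup J=\{2,\dots,2k+1\}$, $I\cap J=\{s\}$, subject to $s=\max I$ or $s=\min J$; distinct such pairs give non‑proportional monomials. The admissible pairs with $s=\max I$ are parametrised by the $k$‑set $I$ alone (then $s$ and $J$ are determined), so there are $\binom{2k}{k}$ of them; those with $s=\min J$ are parametrised by the $(k+1)$‑set $J$ alone, so there are $\binom{2k}{k+1}$; and the only pair satisfying both conditions is $I=\{2,\dots,k+1\}$, $J=\{k+1,\dots,2k+1\}$. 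Inclusion--exclusion together with Pascal's rule then gives $\#\calB(2k+1)=\binom{2k}{k}+\binom{2k}{k+1}-1=\binom{2k+1}{k}-1$.

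Second I would compute $\dim\Lie^k_d(2k+1)$. Homological shifts affect neither this dimension nor the ambient free operad, and $\Lie^k_d$ is a homological shift of $(\Com^k)^{!}=\Lie^k$; hence $\Lie^k_d(2k+1)$ is the quotient of the weight‑two part of the free operad on a single $(k+1)$‑ary generator by the $\Lie^k$‑relation space. That weight‑two part is $\mathrm{Ind}_{\bbS_k\times\bbS_{k+1}}^{\bbS_{2k+1}}\mathbb{1}$, of dimension $\binom{2k+1}{k}$, and inside it the relation spaces of $\Com^k$ and of its Koszul dual are complementary under the Koszul‑duality pairing, so the $\Lie^k$‑relation space has dimension $\dim\Com^k(2k+1)=1$ by Proposition~\ref{lm::Com::k}. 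Thus $\dim\Lie^k_d(2k+1)=\binom{2k+1}{k}-1$ (the same value also drops out of the generating series~\eqref{eq::Lie::series}), and combining with the previous paragraph gives $\dim\calA^k_d(2k+1)_{(2)}\le\#\calB(2k+1)=\dim\Lie^k_d(2k+1)$.

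There is no genuine obstacle here: the inequality is immediate from the spanning Lemma, and the whole content is the numerical coincidence $\#\calB(2k+1)=\dim\Lie^k_d(2k+1)=\binom{2k+1}{k}-1$. The only point needing attention is the combinatorial bookkeeping — correctly isolating the pairs meeting both extremal conditions and checking that the map from admissible pairs to monomials is injective, so that the count is sharp (not needed for the inequality itself, but it foreshadows the later facts that $\calB(2k+1)$ is in fact a basis and that $\tkd^k_d(n)$ is Koszul dual to $\calA^k_d(n)$). One can also recast the argument conceptually, matching $\omega_{\{1\}\sqcup I}\omega_J$ with the two‑vertex Lie tree‑monomial $\nu(x_1,(x_i)_{i\in I\setminus\{s\}},\nu((x_j)_{j\in J}))$ bearing the leaf $s$ on the inner vertex, under which ``$s=\max I$ or $s=\min J$'' is precisely the admissibility condition cutting $\calF(\nu)(2k+1)$ down to a monomial basis of $\Lie^k_d(2k+1)$.
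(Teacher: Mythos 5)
Your argument is correct, and it reaches the conclusion by a slightly different route than the paper. Both proofs start identically: the preceding lemma shows $\calB(2k+1)$ spans $\calA^k_d(2k+1)_{(2)}$, so everything reduces to the identity $\#\calB(2k+1)=\dim\Lie^k_d(2k+1)$. The paper establishes this identity structurally, by writing down an explicit bijection $\varphi$ (formulas \eqref{eq::Lie->A:1}, \eqref{eq::Lie->A:2}) between $\calB(2k+1)$ and the normal quadratic shuffle monomials of $\Lie^k$, i.e.\ those not divisible by $\nu(\nu(x_1,\dots,x_{k+1}),x_{k+2},\dots,x_{2k+1})$, which form a basis by the quadratic Gr\"obner basis of Lemma~\ref{lem::shuffle::span}. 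You instead compute both cardinalities separately and find they both equal $\binom{2k+1}{k}-1$: your inclusion--exclusion count of $\calB(2k+1)$ ($\binom{2k}{k}+\binom{2k}{k+1}-1$ with the unique pair $I=\{2,\dots,k+1\}$, $J=\{k+1,\dots,2k+1\}$ satisfying both extremal conditions) is right, and so is the operadic count $\dim\Lie^k(2k+1)=\dim\mathrm{Ind}_{\bbS_k\times\bbS_{k+1}}^{\bbS_{2k+1}}\mathbb{1}-\dim\Com^k(2k+1)=\binom{2k+1}{k}-1$, since the relation space of the Koszul dual is the annihilator of the relation space of $\Com^k$. Your version buys a closed formula for the dimension; what it does not buy is the map $\varphi$ itself, which the paper needs again later — in Theorem~\ref{thm::pairing} to see that the pairing matrix between the shuffle basis of $\Lie^k_d(2k+1)$ and $\calB(2k+1)$ is diagonal, and in \S\ref{sec::OS::basis} where $\varphi$ is extended to all degrees to produce a monomial basis of $\calA^k_d(n)$. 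The matching you sketch in your last paragraph is essentially that $\varphi$, so you have in effect recovered the paper's proof as an afterthought to your counting argument.
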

\begin{proof}
	It is straightforward to verify that the map $\varphi$ below between shuffle monomials in $\Lie(2k+1)$ that are not divisible by $\nu(\nu(x_1,\ldots,x_{k+1}),x_{k+2},\ldots,x_{2k+1})$ and the elements of the set $\calB(2k+1)$ is bijective:
	\begin{align}
	\label{eq::Lie->A:1}
	\varphi: \nu(\nu(x_1,x_{i_1},\ldots,x_{i_k}), x_{j_1},\ldots,x_{j_k}) & \mapsto  \omega_{\{1,i_1,\ldots,i_k\}} \omega_{\{i_k,j_1,\ldots,j_k\}} \\
	\label{eq::Lie->A:2}
	\varphi: \nu(x_1,x_{i_1},\ldots,x_{i_{s-1}},\nu(x_{i_s},x_{j_1},\ldots,x_{j_{k}}), x_{i_{s+1}},\ldots,x_{i_k}) & \mapsto  \omega_{\{1,i_1,\ldots,i_k\}} \omega_{\{i_s,j_1,\ldots,j_k\}}. 
	\end{align}	
\end{proof}

\subsection{Nondegenerate pairing between operad $\Pois^{k}_{d}$ and cooperad $\calA^k_d$}
\label{sec::pairing}
The goal of this section is to recognize the Hopf structure on $\Pois^k_d$, namely, we want to prove the isomorphism $\pdu\Pois^k_d(n)\simeq \calA^k_d(n)$.

\begin{theorem}
\label{thm::pairing}	
The Hopf cooperad $\calA^k_d$ is dual to the Hopf operad $\Pois^k_d$. In particular,
	\begin{enumerate}	
		\item
		The pairing of the generators of the the Hopf cooperad $\calA$ and the Hopf operad $\Pois_d^k$:
		\[	\langle  1, \mu \rangle = \langle  \omega_{S} , \nu(x_S) \rangle = 1 , \  \langle \omega_{S} , \mu^{\circ k} \rangle = \langle 1 , \nu \rangle = 0
		\]
		extends uniquely 
		to a \underline{nondegenerate} pairing between the Hopf (co)operads that is compatible with the Hopf (co)operads structure in the sense of Lemma~\ref{lem::pairing}.
		\item	
		Moreover,  the pairing between an operadic monomial $\gamma\in \Pois^k_d(I)$ and a monomial  $f:=\omega_{S_1}\cdot \ldots \omega_{S_m}$ in the algebra $\calA^d_k(I)$ differs from zero if and only if there exists a bijection $\psi$ between vertices of $\gamma$ labelled by $\nu$ and the generators $\omega_{S_t}$ dividing $f$ such that for all $t=1\ldots m$ there exists an ordering of the elements of the subset $S_t:=\{s_1<\ldots<s_{k+1}\}$ such that 
		the leaf $s_i$ of $\gamma$ belongs to the subtree growing from the $i$-th input of the vertex $\psi(\omega_{S_t})$.  
	\end{enumerate}
\end{theorem}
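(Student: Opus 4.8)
The plan is to establish both assertions at once: construct the pairing by the combinatorial recipe of part~(2), check it descends to a well-defined pairing compatible with the Hopf (co)operad structures, and then prove nondegeneracy by exhibiting a unitriangular pairing matrix between two explicit bases. Uniqueness of such an extension needs no work: $\calA^k_d$ is generated as an algebra by the $\omega_S$ and $\Pois^k_d$ is generated as an operad by $\mu$ and $\nu$, so for a monomial $f=\omega_{S_1}\cdots\omega_{S_m}$ and an operadic monomial $\gamma$ the value $\langle f,\gamma\rangle$ is forced by iterating the cocomposition rule and the Hopf rule of Lemma~\ref{lem::pairing} down to the generators, where $\langle1,\mu\rangle=\langle\omega_S,\nu(x_S)\rangle=1$ and $\langle\omega_S,\mu^{\circ k}\rangle=\langle1,\nu\rangle=0$. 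Unwinding that iteration produces precisely the signed sum over bijections $\psi$ of part~(2) — the sign being the product of the $(-1)^{d|\sigma|}$ used to reorder each $S_t$ against the subtree order at $\psi(\omega_{S_t})$, times the Koszul sign of the induced permutation of the degree-$(kd-1)$ symbols $\omega_{S_t}$. So the content is: well-definedness and nondegeneracy.

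First I would check that this signed sum vanishes on the defining relations of each side. The relation $\omega_S\omega_T=0$ for $|S\cap T|>1$ is automatic: if $a,b\in S\cap T$ then, in a forest, the lowest vertex lying above both leaves $a$ and $b$ is determined, so it would have to equal both $\psi(\omega_S)$ and $\psi(\omega_T)$ — impossible for a bijection, hence there are no valid $\psi$. The Leibniz relation~\eqref{eq::Leib::rel}, the (skew)commutativity and associativity of $\mu$, and the multiplicativity rule $\langle f\cdot g,\alpha\rangle=\langle f,\Delta^1\alpha\rangle\langle g,\Delta^2\alpha\rangle$ are built into the recursion and follow by the same sign bookkeeping used in the proof that~\eqref{eq::Delta::Pois:k} defines a Hopf structure. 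The two substantial points are the $\ZZ_{2k+1}$-cyclic relation~\eqref{eq::OS::cyclic} and the generalized Jacobi relation~\eqref{eq::Jacobi}; by compatibility with cocomposition it suffices to verify them in arity $2k+1$. There, pairing $\sum_{\ZZ_{2k+1}}\omega\,\omega$ against a $\Lie^k$-monomial on $2k+1$ leaves and repeatedly applying~\eqref{eq::OS::small} reduces the claim to the vanishing of the pairing with $\sum\nu(\nu(x_1,\dots,x_{k+1}),x_{k+2},\dots,x_{2k+1})$ over $\sigma\in A_{2k+1}\cap\bbS_{2k+1}/(\bbS_{k+1}\times\bbS_{k})$, which is exactly~\eqref{eq::Jacobi}; the transposed computation shows in turn that~\eqref{eq::Jacobi} is annihilated by all of $\calA^k_d(2k+1)$. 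This generalized Arnold identity, with its $(-1)^{d|\sigma|}$ signs, is where I expect the real work to sit (for $k=d=1$ it is the classical Arnold relation).

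For nondegeneracy I would compare two bases. On the $\Pois^k_d$-side, Proposition~\ref{thm::Pois::Koszul} gives $\Pois^k_d(n)\cong(\Com\circ\Lie^k\{1-d\})(n)$, hence a basis indexed by forests $\gamma$ of $\Lie^k$-trees on $n$ leaves, the homological piece of degree $m(1-kd)$ being spanned by forests with exactly $m$ internal $\nu$-vertices. On the $\calA^k_d$-side, use the monomial basis (of \emph{no-broken-circuit} type) of $\calA^k_d(n)$ constructed in Section~\S\ref{sec::OS::basis} by iterating the reduction~\eqref{eq::OS::small}, extending to all arities the spanning statement~\eqref{eq::basis::A_2}; its degree-$m$ part is spanned by products of $m$ generators. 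To a forest $\gamma$ assign $f_\gamma:=\prod_v\omega_{\{\min T^v_1,\dots,\min T^v_{k+1}\}}$, the product over the $\nu$-vertices $v$ of $\gamma$, where $T^v_1,\dots,T^v_{k+1}$ are the input subtrees of $v$. I would then verify that $f_\gamma$ is in normal form, that $\gamma\mapsto f_\gamma$ is a bijection between these two bases in each bidegree, that $\langle f_\gamma,\gamma\rangle=\pm1$ because minimality forces the bijection $\psi$ to be unique, and that $\langle f_{\gamma'},\gamma\rangle=0$ whenever $f_{\gamma'}$ is strictly smaller than $f_\gamma$ in the order defining the basis. Thus the pairing matrix is unitriangular, hence invertible; this gives nondegeneracy and, by construction, assertion~(2). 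As a sanity check, nondegeneracy on one side already follows once one knows that $\dim\calA^k_d(n)$ and $\dim\Pois^k_d(n)$ agree in each bidegree, and this equality is a consequence of the identification $\pdu\Pois^k_d(n)=H^{\bullet}_{CE}(\tkd^k_d(n))$ coming from Theorems~\ref{thm::H:Graph:1} and~\ref{thm::ICG::Koszul}.
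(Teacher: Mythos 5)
Your construction of the pairing, the uniqueness argument via generators, and the verification that the defining relations on both sides lie in the kernel (including the combinatorial criterion of part~(2) and the observation that $|S\cap T|>1$ forces zero bijections $\psi$) are essentially the paper's treatment of those points. The genuine gap is in your nondegeneracy argument. Your main route pairs an operadic basis of $\Pois^k_d(n)$ against ``the monomial basis of $\calA^k_d(n)$ constructed in Section~\S\ref{sec::OS::basis}'', extending the spanning statement~\eqref{eq::basis::A_2} to all arities and weights. But that spanning/basis statement in weights $\ge 3$ is exactly what is \emph{not} proved combinatorially: the paper only proves the weaker Lemma~\ref{lem::algebra::span} (spanning by all tree monomials), and the fact that the smaller set $\varphi(\shufLie)$ is a basis is deduced in Section~\S\ref{sec::OS::basis} \emph{from} the duality $\pdu\Pois^k_d(n)\simeq\calA^k_d(n)$, i.e.\ from Theorem~\ref{thm::pairing} itself; the remark at the end of that section explicitly declines to do the direct combinatorics. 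So as written your plan is either circular (if you cite \S\ref{sec::OS::basis}) or leaves the hardest combinatorial step unproved. Your fallback, ``nondegeneracy on one side already follows once one knows the dimensions agree'', is not correct as stated: a pairing between spaces of equal dimension can be degenerate, and moreover the equality $\dim\calA^k_d(n)=\dim\Pois^k_d(n)$ is itself not a formal consequence of $\pdu\Pois^k_d(n)=H^{CE}(\tkd^k_d(n))$ unless one already knows $\calA^k_d(n)$ is the quadratic dual of $\tkd^k_d(n)$, which in the paper is Corollary~\ref{thm::dual::OS::KD}, a consequence of this theorem.

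The missing idea, and what the paper actually does, is a reduction to weight two. By Theorems~\ref{thm::H:Graph:1} and~\ref{thm::ICG::Koszul} one already knows that $\pdu\Pois^k_d(n)$ is a quadratic Koszul algebra (Koszul dual to $\tkd^k_d(n)$). Hence the pairing, viewed as a morphism of graded algebras $\calA^k_d(n)\to\pdu\Pois^k_d(n)$, is automatically surjective (it is bijective on generators), and since both source and target are quadratic it is an isomorphism as soon as it is injective on the quadratic component. That quadratic check is precisely the diagonal pairing matrix $\langle\gamma,\varphi(\gamma')\rangle=\delta_{\gamma,\gamma'}$ between the shuffle basis of $\Lie^k(2k+1)$ and the spanning set $\calB(2k+1)$, a computation you essentially already have in your unitriangularity step. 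Inserting this reduction closes your argument without any higher-weight normal-form combinatorics; the higher-weight basis then comes out afterwards for free, exactly as in Section~\S\ref{sec::OS::basis}.
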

\begin{proof}
	The second part of the theorem follows directly from the compatibility of the pairing and the Hopf operad structure. 
	On the other hand these vanishing properties of monomials explains the straightforward computation that shows that quadratic relations in the operad $\Pois^k_d$ and quadratic relations in algebra $\calA^k_d$ belongs to the kernel of this pairing what follows that the pairing is well defined.
	E.g. there are either $0$ or two monomials (with opposite signs) from the l.h.s. of~\eqref{eq::OS::cyclic} that have nonzero pairing with a shuffle monomial $\nu(x_{I},\nu(x_{J}))$. Similarly, there are either $0$ or two shuffle monomials (with opposite signs) in the Jacobi identity, that has nonzero pairing with a monomial $\omega_{S}\omega_{T}$.
	
Thanks to Theorem~\ref{thm::H:Graph:1} and Theorem~\ref{thm::ICG::Koszul}	 we already know that the coalgebra $\Pois^k_d(n)$ is a quadratic Koszul algebra whose Koszul dual is isomorphic to $\tkd^k_d(n)$. Thus, it remains to show that the aforementioned pairing is nondegenerate on the level of quadratic components where one can easely see that the matrix of pairing between monomial shuffle basis in $\Lie^k_d(2k+1)$ and the spanning set $\calB(2k+1)$ of $\calA^k_d(2k+1)$ is diagonal:
\[\langle \gamma, \varphi(\gamma')\rangle= \delta_{\gamma,\gamma'} \]
 The bijection $\varphi$ was defined in \ref{eq::Lie->A:1},\ref{eq::Lie->A:2}.   
\end{proof}

While forgetting the operad structure in the isomorphism $\pdu\Pois^k_d(n)\simeq \calA^k_d(n)$ we end up with one of the main applications:
\begin{corollary}
	\label{thm::dual::OS::KD}
	The quadratic (super)commutative generalized Orlik-Solomon algebra $\calA^{k}_d(n)$ and the quadratic generalized Drinfeld-Kohno Lie algebra $\tkd_d^k(n)$ are quadratic dual to each other and satisfy the Koszul property.
\end{corollary}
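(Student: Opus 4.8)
The plan is to read the Corollary off from Theorems~\ref{thm::H:Graph:1},~\ref{thm::ICG::Koszul} and~\ref{thm::pairing}, together with the elementary observation that, since each $\Pois^k_d(n)$ is finite dimensional (Proposition~\ref{thm::Pois::Koszul}), linear duality turns the cocommutative coalgebra $\Pois^k_d(n)$ into the commutative algebra $\pdu\Pois^k_d(n)$, exchanging the Koszul property of coalgebras with that of algebras and Koszul-dual pairs with Koszul-dual pairs. The only ingredient that is not literally one of the three theorems is the identification, for each fixed $n$, of the coalgebra $\Pois^k_d(n)$ with the Chevalley--Eilenberg coalgebra of the Lie algebra $\tkd^k_d(n)$.

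First I would establish that identification. By Theorem~\ref{thm::ICG::Koszul} (through Lemma~\ref{lem::ICG::trivalent}) the higher $\LL_\infty$-brackets vanish on $H^{\udot}(\ICG^{k\orient}_{d+1}(n))$ and there is an isomorphism of graded Lie algebras $\tkd^k_d(n)\simeq H^{\udot}(\ICG^{k\orient}_{d+1}(n))$. Since the Chevalley--Eilenberg functor is invariant under $\LL_\infty$-quasi-isomorphisms, passage to cohomology induces an isomorphism $C^{CE}_{\ldot}(\tkd^k_d(n))\simeq C^{CE}_{\ldot}(\ICG^{k\orient}_{d+1}(n))$ on homology; combined with Theorem~\ref{thm::H:Graph:1}, which gives $H^{CE}_{\ldot}(\ICG^{k\orient}_{d+1}(n))=H(\fGraphs^{k\orient}_{d+1}(n))=\Pois^k_d(n)$, this yields an isomorphism of cocommutative coalgebras $\Pois^k_d(n)\simeq H^{CE}_{\ldot}(\tkd^k_d(n))$. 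Now $\tkd^k_d(n)$ is a quadratic Koszul Lie algebra by Theorem~\ref{thm::ICG::Koszul}, so its Chevalley--Eilenberg homology $H^{CE}_{\ldot}(\tkd^k_d(n))=\operatorname{Tor}^{U\tkd^k_d(n)}_{\ldot}(\kk,\kk)$ is concentrated on the diagonal and is the Koszul-dual coalgebra of $U\tkd^k_d(n)$. Hence $\Pois^k_d(n)$ is a quadratic Koszul coalgebra and its linear dual $\pdu\Pois^k_d(n)$ is the quadratic Koszul algebra Koszul-dual to $\tkd^k_d(n)$ in the commutative--Lie sense, i.e. $(\pdu\Pois^k_d(n))^{!}=U\tkd^k_d(n)$. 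The only bookkeeping here is with the suspension conventions, so that the duality is on the nose: the generator $\nu_S$ of $\tkd^k_d(n)$ lies in degree $2-kd$, the dual generator of $\pdu\Pois^k_d(n)$ in degree $kd-1$, and $(2-kd)+(kd-1)=1$, as a Koszul-dual pair of generators requires.

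Finally I would transport this across Theorem~\ref{thm::pairing}: forgetting the (co)operad structure, the nondegenerate Hopf pairing of that theorem gives an isomorphism of graded algebras $\pdu\Pois^k_d(n)\simeq\calA^k_d(n)$ which, being built from the pairing $\langle \omega_S, \nu(x_S)\rangle = 1$ on generators, sends $\omega_S$ to the degree-$(kd-1)$ generator dual to $\nu_S$. Pushing the conclusions of the previous paragraph through this isomorphism yields exactly the statement: $\calA^k_d(n)$ is quadratic and Koszul with Koszul dual $\tkd^k_d(n)$. I do not anticipate a real obstacle; the substance of the argument lies entirely in Theorems~\ref{thm::H:Graph:1} and~\ref{thm::ICG::Koszul}. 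The one point that still wants a line of verification is that the two spaces of quadratic relations are mutual annihilators under the pairing on degree-$2$ parts, and this is precisely the diagonal computation already contained in the proof of Theorem~\ref{thm::pairing} (the matrix $\langle \gamma,\varphi(\gamma')\rangle=\delta_{\gamma,\gamma'}$ between the shuffle basis of $\Lie^k_d(2k+1)$ and the monomial basis $\calB(2k+1)$ via the bijection $\varphi$ of~\eqref{eq::Lie->A:1}--\eqref{eq::Lie->A:2}), together with the bound $\dim\calA^k_d(2k+1)_{(2)}\le\dim\Lie^k_d(2k+1)$.
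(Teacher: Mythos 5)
Your proposal is correct and follows the same route as the paper: the corollary is obtained by combining Theorems~\ref{thm::H:Graph:1} and~\ref{thm::ICG::Koszul} (which identify $\Pois^k_d(n)$ with the Chevalley--Eilenberg homology of the Koszul Lie algebra $\tkd^k_d(n)$) with the nondegenerate pairing of Theorem~\ref{thm::pairing} identifying $\pdu\Pois^k_d(n)$ with $\calA^k_d(n)$. You have merely spelled out the Chevalley--Eilenberg and degree bookkeeping that the paper leaves implicit in the one-line deduction after Theorem~\ref{thm::pairing}.
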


\subsection{A basis in generalized Orlik-Solomon algebras $\calA^k_d(n)$}
\label{sec::OS::basis}
Let us use the nondegeneracy of the pairing discovered in~\ref{thm::pairing} and present a basis in $\calA^k_d(n)$ that is dual to the operadic basis in $\Pois_d^k(n)$. 

\begin{lemma}
	\label{lem::shuffle::span}	
	The set $\shufLie(n)$ consisting of shuffle monomials generated by a single $k+1$-ary operation $\nu$ that are not divisible by the shuffle monomial $\nu(\nu(x_1,\ldots,x_{k+1}),x_{k+2},\ldots,x_{2k+1})$ form a basis of the space of $n$-ary operations of the operad $\Lie^k$. 
\end{lemma}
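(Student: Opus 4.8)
The plan is to deduce the statement from the theory of Gröbner bases for shuffle operads \cite{DK::Grob}, by dualizing the Gröbner basis of $\Com^k$ supplied by Proposition~\ref{lm::Com::k}; this is the direct generalization of the well-known Gröbner basis of $\Lie$. Observe first that $\shufLie(n)$ is, by definition, the set of shuffle tree monomials on $n$ leaves built from one $(k+1)$-ary generator $\nu$ that avoid the single quadratic pattern $w:=\nu(\nu(x_1,\ldots,x_{k+1}),x_{k+2},\ldots,x_{2k+1})$ as a sub-tree. Hence it is enough to produce, for a suitable admissible monomial order on the free shuffle operad generated by $\nu$, a quadratic Gröbner basis of $\Lie^k$ whose unique quadratic leading monomial is $w$: the lemma then follows since normal monomials with respect to a Gröbner basis always form a linear basis of the operad, and $\dim\Lie^k(n)<\infty$ by Koszulness of $\Com^k$ (with Hilbert series given by \eqref{eq::Lie::series}).

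First I would establish the mirror statement for $\Com^k$, which is elementary. Fix a reverse-path-lexicographic order --- opposite, at the quadratic level, to the one used in Proposition~\ref{lm::Com::k} --- under which the fully left-nested caterpillar $\mu_{k+1}(\mu_{k+1}(\cdots\mu_{k+1}(x_1,\ldots,x_{k+1}),x_{k+2},\ldots)\cdots,x_{nk+1})$ with leaves in their natural order is minimal in each arity $nk+1$. A short check using only the shuffle condition on minima shows that this caterpillar is the unique shuffle monomial in arity $nk+1$ all of whose quadratic sub-trees equal the left comb $\mu_{k+1}(\mu_{k+1}(x_1,\ldots,x_{k+1}),x_{k+2},\ldots,x_{2k+1})$. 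Combined with $\dim\Com^k(nk+1)=1$ (Proposition~\ref{lm::Com::k}) and the fact that normal monomials always span, this forces the caterpillars to be a basis, hence the quadratic rewriting system for $\Com^k$ with that order is confluent: $\Com^k$ admits a quadratic Gröbner basis whose only quadratic normal monomial is that left comb.

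Next I would invoke the Koszul-duality property of quadratic Gröbner bases \cite{DK::Grob}: a quadratic operad $\calP$ has a quadratic Gröbner basis for an order $O$ if and only if $\calP^!$ has one for the opposite order, and in that case the normal monomials of $\calP^!$ are precisely the shuffle monomials avoiding the quadratic normal monomials of $\calP$. Applying this to $\calP=\Com^k$ with the order above, $\Lie^k=(\Com^k)^!$ acquires a quadratic Gröbner basis; since the shuffle-operadic space of relations of $\Lie^k$ in arity $2k+1$ is one-dimensional --- dual to a $\bigl(\binom{2k+1}{k+1}-1\bigr)$-dimensional relation space of $\Com^k$ --- and there are no relations in other arities, this Gröbner basis consists of a single element up to $\bbS_{2k+1}$-relabelling, namely the generalized Jacobi identity \eqref{eq::Jacobi} with leading term $w$. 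Its normal monomials are then exactly the $w$-avoiding shuffle monomials, i.e.\ $\shufLie(n)$, which proves the lemma.

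The only delicate point --- more bookkeeping than genuine obstacle --- is to pin down one concrete admissible order on shuffle trees, check that it is compatible with operadic composition, and verify that under it the summand of \eqref{eq::Jacobi} indexed by $\{1,\ldots,k+1\}$ is the maximal one; here one must be careful with the ``left comb versus right comb'' convention, since the order dual to the path-lexicographic one of Proposition~\ref{lm::Com::k} would instead single out $\nu(x_1,\ldots,x_k,\nu(x_{k+1},\ldots,x_{2k+1}))$. The apparent difficulty, confluence of the quadratic rewriting system for $\Lie^k$ (equivalently, vanishing of the arity-$(3k+1)$ $S$-polynomials), is supplied automatically by the mirror computation together with the duality theorem, so no separate overlap analysis is required. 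Should one wish to avoid \cite{DK::Grob}, the alternative is to prove spanning directly --- repeatedly rewriting via \eqref{eq::Jacobi} to remove occurrences of $w$ terminates with respect to the order --- and then to count $w$-avoiding shuffle trees by a transfer-matrix/generating-function argument, matching the Hilbert series of $\Lie^k$ extracted from the Koszulness of $\Com^k$.
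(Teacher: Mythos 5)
Your proposal is correct and follows essentially the same route as the paper: establish the quadratic Gr\"obner basis of $\Com^k$ for the reverse path-lexicographic order (whose unique normal quadratic monomial is the left comb) and then invoke the Gr\"obner-basis duality of \cite{DK::Grob} to conclude that $\Lie^k$ has a quadratic Gr\"obner basis with leading term $\nu(\nu(x_1,\ldots,x_{k+1}),x_{k+2},\ldots,x_{2k+1})$, so that $\shufLie(n)$ is the set of normal monomials. Your write-up is merely more explicit about the order conventions and the one-dimensionality of the relation space, points the paper leaves implicit.
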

\begin{proof}
	The Koszul-dual operad $\Com^k$ admits a quadratic Gr\"obner basis with more or less any compatible ordering of shuffle monomials. Let us consider, for example, the reverse path-lexicographical ordering.  The only nontrivial operation in $\Com^k(2k+1)$ is given by the monomial $\nu^{!}(\nu^{!}(x_1,\ldots,x_{k+1}),x_{k+2},\ldots,x_{2k+1})$ that has to be the leading term of the Koszul dual quadratic Gr\"obner basis of $\Lie^k$.
\end{proof}

\begin{definition}
Let $\kgraph_d(n)$ denotes the subset of oriented graphs $\fGraphs^k_d(n)$ with $n$ white numbered vertices and with arbitrary amount of unordered black vertices such that each internal (black) vertex has no incoming edges and exactly $(k+1)$ outgoing edges. Multiple edges are not allowed.
	\end{definition}
Note that each edge in $\kgraph$ starts in an internal vertex and ends in an external one. Hence, the orientation of a graph is prescribed by its shape and can be omitted for simplicity.
Moreover,  there is an obvious bijection between monomials in the free (super)commutative algebra generated by the set of elements $\{\omega_{S}| S\subset [1n], |S|=k+1\}$ and the aforementioned set of graphs $\kgraph(n)$, such that each multiple $\omega_{S}$ produces an internal black vertex connected with the subset of external white vertices indexed by the elements of $S$.

\begin{lemma}
	\label{lem::algebra::span}	
	If the graph $\Gamma\in\kgraph_{d+1}(n)$ contains a loop then the corresponding monomial in $\calA^k_d(n)$ is equal to zero. Hence, the set of monomials assigned to the subset of trees $\ktree(n)\subset\kgraph(n)$ spans the algebra $\calA^k_d(n)$.
\end{lemma}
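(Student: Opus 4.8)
The plan is to deduce the vanishing not by a direct rewriting with the relations \eqref{eq::OS::multi}--\eqref{eq::OS::cyclic}, but from the nondegeneracy of the pairing already established in Theorem~\ref{thm::pairing}. Write $f=\prod_{v}\omega_{S_v}\in\calA^k_d(n)$ for the monomial attached to $\Gamma\in\kgraph_{d+1}(n)$. A repeated generator makes $f$ vanish already by \eqref{eq::OS::multi} with $S=T$, so we may assume the $S_v$ are pairwise distinct, i.e. $f$ is squarefree. By Theorem~\ref{thm::pairing}(1) it then suffices to prove $\langle f,\gamma\rangle=0$ for every operadic basis monomial $\gamma\in\Pois^k_d(n)$. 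By Proposition~\ref{thm::Pois::Koszul} together with Lemma~\ref{lem::shuffle::span}, such a $\gamma$ is the datum of a set partition $[1n]=B_1\sqcup\dots\sqcup B_c$ together with, for each block with $|B_i|\ge k+1$, a rooted $\nu$-tree $t_i$ (all internal vertices $(k+1)$-ary), all composed by iterated $\mu$; singleton blocks carry $t_i=\mathrm{Id}$, and $|B_i|=kN_i+1$ where $N_i$ is the number of $\nu$-vertices of $t_i$.

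Now I would invoke the combinatorial criterion of Theorem~\ref{thm::pairing}(2): $\langle f,\gamma\rangle\ne 0$ forces a bijection $\psi$ from the $\nu$-vertices of $\gamma$ to the factors $\omega_{S_v}$ of $f$ such that, for each $\nu$-vertex $u$ with $\psi(u)=\omega_S$, the $k+1$ elements of $S$ lie in $k+1$ distinct input subtrees of $u$. The key claim is that \emph{the existence of such a $\psi$ forces the bipartite graph $\Gamma$ to be a forest}, which contradicts the presence of a loop and finishes the proof. Since the input subtrees of $u$ lie inside the $\nu$-tree of the block containing $u$, one has $S\subseteq B_i$ whenever $u$ lies in $t_i$, so every connected component of $\Gamma$ is supported on a single block, and one reduces to showing: for a fixed block $B$ with $\nu$-tree $t$, the $N$ subsets $\{\psi(u):u\in\nu(t)\}$ are the black vertices of a \emph{tree} on the white vertex set $B$. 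This I would prove by induction on $N$. For $N=1$, $t=\nu(s_1,\dots,s_{k+1})$ forces $S=B$, a single $(k+1)$-valent vertex. For $N\ge 2$, let $L_1\sqcup\dots\sqcup L_{k+1}=B$ be the leaf sets of the input subtrees $T_1,\dots,T_{k+1}$ of the root $\nu$-vertex $u^*$; the criterion gives $\psi(u^*)=\omega_{S^*}$ with exactly one element of $S^*$ in each $L_j$, while $\psi$ restricts to a bijection from $\nu(T_j)$ onto $\{\omega_{S_v}:S_v\subseteq L_j\}$, which by induction forms a tree on $L_j$ with $N_j$ black vertices and $|L_j|=kN_j+1$. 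Attaching $S^*$, which sends one edge into each of these $k+1$ disjoint trees, yields a connected graph with $N=1+\sum_j N_j$ black vertices, $|B|=kN+1$ white vertices and $(k+1)N=N+|B|-1$ edges, hence a tree. Summing over blocks, $\Gamma$ is a disjoint union of such trees together with the isolated white vertices coming from singleton blocks, so $\Gamma$ is acyclic.

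The two assertions of the Lemma then follow at once: a graph containing a loop kills its monomial by the above, and since every element of $\calA^k_d(n)$ is a $\kk$-linear combination of squarefree monomials (the non-squarefree ones vanishing by \eqref{eq::OS::multi}), and squarefree monomials are exactly those indexed by $\kgraph_{d+1}(n)$, the algebra is spanned by the monomials indexed by the acyclic graphs, i.e. by $\ktree(n)$. The step I expect to be the main technical obstacle is the bookkeeping in the inductive claim: one must verify that the global bijection $\psi$ furnished by Theorem~\ref{thm::pairing}(2) really restricts block by block and then subtree by subtree to a bijection with the stated support property, and that a block of size $\ge k+1$ indeed carries a $\nu$-tree whose root is a $\nu$-vertex (this uses the shuffle description of $\Lie^k$ from Lemma~\ref{lem::shuffle::span}). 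A purely combinatorial alternative — repeatedly cutting a chord of the loop via \eqref{eq::OS::cyclic} and its corollary \eqref{eq::OS::small} — is available, but a single application of \eqref{eq::OS::small} produces three terms of which only one has strictly shorter loop length, so that route would require a more delicate descent argument and I would not pursue it.
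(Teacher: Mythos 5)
Your proposal is correct in substance, but it takes a genuinely different route from the paper. The paper proves the vanishing directly inside the quadratic algebra, by induction on the length of a cycle of $\Gamma$: a $4$-cycle produces a factor $\omega_S\omega_T$ with $|S\cap T|\ge 2$, killed by \eqref{eq::OS::multi}; for a longer cycle one picks an external vertex $a$ on the cycle with cycle-neighbours $b,c$, so that the monomial contains $\omega_{a,b,I}\omega_{a,c,J}$, and applies the full $(2k+1)$-term cyclic relation \eqref{eq::OS::cyclic} to the cyclic order $\{b,i_1,\dots,i_{k-1},a,j_1,\dots,j_{k-1},c\}$. Because $b$ and $c$ are adjacent in this cyclic order, \emph{every} other term of the relation contains a factor $\omega_{b,c,S}$, hence corresponds to a graph whose cycle is shorter by two, and the induction closes. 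This answers exactly the obstacle that made you abandon the combinatorial route: the right move is not the four-term identity \eqref{eq::OS::small} but the whole cyclic relation with $b,c$ placed adjacently, which makes every non-original term strictly decrease the cycle length. Your argument instead deduces the statement from Theorem~\ref{thm::pairing}: since the lemma sits in \S\ref{sec::OS::basis}, after \S\ref{sec::pairing}, and the proof of Theorem~\ref{thm::pairing} does not use this lemma, your route is not circular; and your key claim, that the bijection $\psi$ of Theorem~\ref{thm::pairing}(2) forces the bipartite graph of any monomial pairing nontrivially with an operadic monomial to be a forest (block by block, subtree by subtree, with the edge/vertex count $(k+1)N=N+|B|-1$), is sound and is close in spirit to the injectivity argument the paper uses for its final basis lemma. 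What each approach buys: the paper's proof is elementary and keeps the lemma a standalone fact about the quadratic algebra $\calA^k_d(n)$, independent of the Koszulness and graph-complex machinery; yours is lighter on rewriting but makes the lemma logically dependent on the heavy input of Theorem~\ref{thm::pairing}. Two small repairs to your induction: start it at $N_j=0$ (an input subtree that is a single leaf contributes an isolated white vertex of $L_j$, and the edge from $S^*$ into it preserves connectivity), and state explicitly that singleton blocks and white vertices not covered by any factor enter $\Gamma$ as isolated vertices, which does not affect acyclicity.
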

\begin{proof}
	Let us do the induction on the  length of a cycle.
	For the base of induction we notice that if a graph $\Gamma\in\kgraph(n)$ has a cycle of length $4$ (i.e. containing two internal and two external vertices) then the corresponding monomial is divisible by $\omega_{S} \omega_T$ with $|S\cap T|\geq 2$ and hence is zero.
	Suppose the given graph $\Gamma$ has a cycle of length $2m$. Let $\extv{a}$ be an external vertex in this cycle and let $\extv{b}$ and $\extv{c}$ be the adjacent external vertices in this loop.
	Then the corresponding monomial $\omega(\Gamma)$ has a factor $\omega_{a,b,I}\omega_{a,c,J}$ for appropriate subsets $I=\{i_1,\ldots,i_{k-1}\}$ and $J=\{j_1,\ldots,j_{k-1}\}$ of cardinality $k-1$.  
	If we apply the relation~\eqref{eq::OS::multi} to the following cyclically ordered set of indices:
	$$
	\{b,i_1,\ldots,i_{k-1},a,j_1,\ldots,j_{k-1},c\}
	$$
	we can rewrite
	$\omega_{a,b,I}\omega_{a,c,J}$ as a sum of quadratic monomials, such that each summand has a factor $\omega_{b,c,S}$ for appropriate $S$. Notice that the corresponding graphs will have cycles with the vertex $\extv{a}$ erased and $\extv{b}$ and $\extv{c}$ become external vertices that are adjacent in a given cycle. 
	Thus the length of the corresponding cycles becomes less and equal to $2m-2$.
\end{proof}

Let us extend the assignement $\varphi$ (given for quadratic terms in~\ref{eq::Lie->A:1},\ref{eq::Lie->A:2}) and define (recursively) the combinatorial map $\varphi$ that assings to each shuffle monomial  $\gamma\in \shufLie(S)$ 
a monomial $\varphi(\gamma)\in \calA^k_d(S)$. Note that $S$ is supposed to be a linearly ordered set that labels leaves of shuffle monomials:
\begin{enumerate}
	\item We pose $\varphi(\nu(x_{s_1},\ldots,x_{s_{k+1}})): = \omega_{\{s_1,\ldots,s_{k+1}\}}$;
	\item Suppose that  $v$ is a vertex of $\gamma \in \shufLie(S)$  whose all incoming edges are leaves marked by $s_1<\ldots<s_{k+1}$. (In other words $v$ corresponds to a single operation $\nu(x_{s_1},\ldots,x_{s_k})$.)
	Suppose that, moreover, the edge outgoing from $v$ is not the leftmost income of the corresponding inner vertex of $\gamma$. Then we set 
	$$
	\varphi(\gamma) := \omega_{\{s_1,\ldots,s_{k+1}\}} \varphi(\gamma^{v \leftrightarrow s_{1}})
	$$ 
	where $\gamma^{v \leftrightarrow s_{1}}$ is the shuffle monomial with the branch growing from $v$ replaced by a leaf indexed by a minimal leaf of this branch $s_1$.
	\item 
	Let $v$ be a vertex of $\gamma \in $ whose all incoming edges are leaves marked by $s_1<\ldots<s_{k+1}$ and  the edge outgoing from $v$ is the leftmost income of the corresponding inner vertex of $\gamma$. Then we set 
	$$
	\varphi(\gamma) := \omega_{\{s_1,\ldots,s_{k+1}\}} \varphi(\gamma^{v \leftrightarrow s_{k+1}})
	$$ 
	where $\gamma^{v \leftrightarrow s_{k+1}}\in \shufLie(S\setminus \{s_1,\ldots,s_k\})$ is the shuffle monomial with the branch growing from $v$ replaced by a leaf indexed by a maximal leaf of this branch $s_{k+1}$. But we pose a new order on the new set  of leaves $S':=S\setminus\{s_1,\ldots,s_{k}\}$ saying that $s_{k+1} > s$ (resp. $s_{k+1} < s$) iff $s_1 >s $ (resp. $s_{1} < s$). In other words we erase all elements $s_2,\ldots,s_{k+1}$ and replace $s_1$ with $s_{k+1}$.
\end{enumerate}
For example,
$$
\varphi\left( \nu(\nu(x_1,\nu(x_3,x_4,x_6),x_5),x_2,\nu(x_7,x_8,x_9)  \right) = \omega_{346} \omega_{789} \omega_{135} \omega_{527}.
$$
The set of admissible shuffle monomials in $\Pois_d^k$ represents the iterated $\mu$-multiplication of shuffle monomials in $\Lie^k$ depending on different subset of indices. Thus, we can easely assign to each shuffle monomial  $$\alpha=\mu(\gamma_1,\mu(\gamma_2,\ldots,\mu(\gamma_m,\gamma_{m+1})))\in \Com(m+1)\circ\Lie^k_d \subset  \Pois^k_d$$ the monomial $\varphi(\gamma_1)\ldots\varphi(\gamma_{m+1})\in\calA^k_d$.

\begin{lemma}
	The image under assignment $\varphi$ of admissible shuffle monomials of $\shufLie(nk+1)$
	constitute a basis of the $n$-th graded component of the algebra $\calA^k(nk+1)$ and the image of admissible shuffle monomials of  
	 $\Pois^{k}_d(S)$ constitute a basis of the algebra $\calA^k_d(S)$.
\end{lemma}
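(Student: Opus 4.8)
The plan is to reduce the statement to one spanning assertion, since everything else needed is already in place. By Proposition~\ref{thm::Pois::Koszul} there is an isomorphism of symmetric collections $\Pois^k_d\simeq\Com\circ\Lie^k\{1-d\}$, and by Lemma~\ref{lem::shuffle::span} the set $\shufLie$ is a basis of $\Lie^k$; since each $\Com(m)$ is one-dimensional, the admissible shuffle monomials $\mu(\gamma_1,\mu(\gamma_2,\dots,\mu(\gamma_m,\gamma_{m+1})))$, with the $\gamma_i$ running over $\shufLie$-monomials on the blocks of a partition of $S$ (a single block being just a shuffle-Lie monomial), form a $\kk$-basis of $\Pois^k_d(S)$. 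On the other hand Theorem~\ref{thm::pairing} provides a nondegenerate pairing $\langle-,-\rangle\colon\calA^k_d(S)\otimes\Pois^k_d(S)\to\kk$, so $\dim\calA^k_d(S)=\dim\Pois^k_d(S)$ equals the number of admissible shuffle monomials. Hence $\{\varphi(\gamma)\}$, indexed by the admissible shuffle monomials, is a family of exactly $\dim\calA^k_d(S)$ vectors inside $\calA^k_d(S)$: as soon as it is shown to \emph{span} $\calA^k_d(S)$ it is automatically a basis, and in particular $\gamma\mapsto\varphi(\gamma)$ is injective. The graded statement for $\calA^k(nk+1)$ is the special case $S=\{1,\dots,nk+1\}$: a monomial of $\shufLie(nk+1)$ has exactly $n$ internal vertices, so $\varphi$ sends it to a product of exactly $n$ generators $\omega_S$, landing in the top, degree-$n$ component of $\calA^k(nk+1)$, which by Lemma~\ref{lem::algebra::span} is spanned precisely by the tree-monomials with $n$ internal vertices.

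So it remains to prove that the $\varphi$-images of admissible shuffle monomials span $\calA^k_d(S)$. First note that each $\varphi(\gamma)$ is a tree-monomial in the sense of Lemma~\ref{lem::algebra::span}: the recursive rules (1)--(3) defining $\varphi$ successively split off one external branch and record one new generator $\omega_{\{s_1,\dots,s_{k+1}\}}$, so the underlying graph is obtained by iteratively gluing trees along a single external vertex and carries no loop. By Lemma~\ref{lem::algebra::span} the algebra $\calA^k_d(S)$ is already spanned by the tree-monomials $\omega(\Gamma)$, $\Gamma\in\ktree(S)$, so it suffices to rewrite an arbitrary such $\omega(\Gamma)$, modulo~\eqref{eq::OS::multi} and~\eqref{eq::OS::cyclic} (equivalently~\eqref{eq::OS::small}), as a linear combination of $\varphi$-images. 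This is the multi-vertex generalization of the computation that proves $\calB(2k+1)$ spans the degree-$2$ part of $\calA^k(2k+1)$, where $\Gamma$ has two internal vertices and one application of~\eqref{eq::OS::small} is enough: one locates a ``leaf-most'' internal vertex $v$ of $\Gamma$ (all of whose neighbours but one are external), so that $\omega(\Gamma)$ carries a factor $\omega_{a,b,I}$ at $v$ together with the generator along the unique internal edge at $v$, and applies~\eqref{eq::OS::small} to the cyclically ordered $2k+1$-element index set formed by these two generators to move the shared external vertex into the extremal slot prescribed by rule~(2) (non-leftmost outgoing edge) or rule~(3) (leftmost outgoing edge) of the definition of $\varphi$, with the relabelling of the leaf set in rule~(3) matching the one used for $m=2$. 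Each resulting summand is a degree-$m$ monomial which, after re-expressing it via Lemma~\ref{lem::algebra::span} in terms of tree-monomials if it is not already one, is strictly closer to a $\varphi$-normal form for a suitable complexity statistic; by induction (on the number of internal vertices, and for that number fixed on the statistic) $\omega(\Gamma)$ is brought into the span of $\{\varphi(\gamma)\}$.

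The delicate point, and the step I expect to be the main obstacle, is the bookkeeping that makes this straightening terminate: one must choose the complexity statistic so that every use of~\eqref{eq::OS::small}, followed if necessary by a loop-reduction from Lemma~\ref{lem::algebra::span}, strictly decreases it and never escapes the control of these two reductions, and one must track the signs through the relabellings in rule~(3). This is exactly the multi-generator version of the argument already carried out for $m=2$, so the mechanism is clear, but proving confluence of the whole rewriting in general needs care. An alternative that sidesteps confluence entirely is to prove instead that the matrix $\bigl(\langle\varphi(\gamma),\gamma'\rangle\bigr)$, indexed by admissible shuffle monomials, is triangular with entries $\pm1$ on the diagonal for a suitable total order; the vanishing criterion of Theorem~\ref{thm::pairing}(2), together with the diagonality recorded in the quadratic case inside the proof of Theorem~\ref{thm::pairing}, is the natural seed for such an argument, and triangularity forces $\{\varphi(\gamma)\}$ to be a basis by the same dimension count. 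Either way, once spanning (or triangularity) is established, the counting argument of the first paragraph finishes the proof and gives injectivity of $\varphi$ on admissible shuffle monomials as a byproduct.
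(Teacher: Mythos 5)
Your reduction is sound as far as it goes: since the admissible shuffle monomials are a basis of $\Pois^k_d(S)$ (Lemma~\ref{lem::shuffle::span} plus $\Pois^k_d\simeq\Com\circ\Lie^k\{1-d\}$) and $\calA^k_d(S)$ is its dual by Theorem~\ref{thm::pairing}, a family of $\dim\calA^k_d(S)$ vectors that spans is automatically a basis. But the step that carries the actual content — spanning — is exactly the step you do not prove. Your straightening argument stops at the decisive point: you never exhibit the complexity statistic, never show that an application of~\eqref{eq::OS::small} at a ``leaf-most'' vertex (possibly followed by the loop-killing moves of Lemma~\ref{lem::algebra::span}, which themselves produce new non-tree terms to be re-expanded) strictly decreases it, and never track the sign/relabelling bookkeeping of rule~(3) of $\varphi$. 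As stated, ``each resulting summand is strictly closer to a $\varphi$-normal form for a suitable statistic'' is an assertion, not an argument, and this is precisely the combinatorics the paper's own remark (citing the sketch of 5.20 in~\cite{Manin::Vallette}) flags as cumbersome and deliberately avoids. So the primary route has a genuine gap rather than a routine omission.

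Your ``alternative'' is in fact the paper's proof, but you only gesture at it. The paper never proves spanning by rewriting; it proves that the matrix $\bigl(\langle\gamma,\varphi(\gamma')\rangle\bigr)$ over admissible shuffle monomials is diagonal with nonzero diagonal, and then duality of $\Pois^k_d(S)$ and $\calA^k_d(S)$ gives linear independence and spanning simultaneously. The diagonality is not a formal consequence of the quadratic case: it is an induction on $n$ exploiting two structural properties of $\varphi$-images that you would need to isolate and prove — the underlying graph is a tree in which every external vertex meets at most two generators, and in every generator $\omega_I$ the repeated index sits at the minimum or the maximum of $I$ (by rules~\ref{eq::Lie->A:1},\ref{eq::Lie->A:2} and their recursive extension). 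Given a nonvanishing pairing $\langle\gamma,\varphi(\gamma')\rangle\neq 0$, one peels off a generator $\omega_I$ of $\varphi(\gamma')$ with a single repeated index: if that index is $\min(I)$, the combinatorial nonvanishing criterion of Theorem~\ref{thm::pairing}(2) forces $\gamma$ to have a vertex whose inputs are exactly the leaves $I$, and one erases it and inducts; otherwise one finds a generator whose repeated index is its maximum and argues symmetrically. None of this peeling argument (nor the check that the diagonal entries are $\pm1$) appears in your proposal, so with either route the lemma remains unproved; I would recommend carrying out the diagonality induction rather than attempting to make the rewriting confluent.
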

\begin{proof}	
Suppose that for a pair of $\shufLie(nk+1)$-monomials $\gamma,\gamma'$ we have the nonvanishing of the pairing $\langle\gamma, \varphi(\gamma')\rangle \neq 0$. Let us show by induction on $n$ that $\gamma=\gamma'$.
 Since we are dealing with trees one can find a multiple $\omega_{i_1,\ldots,i_{k+1}}$ of $\varphi(\gamma')$ that  has at most one index $i_s$ that appears in another multiple of  $\varphi(\gamma')$. Note, that $\varphi$ is constructed in the way that all external  vertices has at most two edges. That is $i_s$ appears as an index in exaclty one another generator. Now if $i_s$ is a minima of $I:=\{i_1,\ldots,i_{k+1}\}$ then there exists a unique vertex $v$ of $\gamma$ connected directly with leaves indexed by $i_1,\ldots,i_{k+1}$. Erasing the vertex $v$ from $\gamma$ and the corresponding multiple $\omega_{i_1,\ldots,i_{k+1}}$ from $\varphi(\gamma')$ we can proceed with induction. Similarly, if $i_s$ is not a minima of $I$, and this happens for all multiples of $\varphi(\gamma')$ then there exists a multiple $\omega_{J}$ yielding the property that there exists exaclty one index $j_s\in J$ that appears in other multiples of $\varphi(\gamma')$, and $j_s$ has to be the maxima of $J$ and the same induction procedure makes sence.
 
 Thus, we conclude that matrix pairing between $\shufLie(nk+1)$ and $\varphi(\shufLie(nk+1))$ is diagonal. Therefore, the matrix pairing between $\Pois^{k}_d(S)$ and their images under $\varphi$ is also given by diagonal matrix.
 On the other hand we know  that $\Pois^k_d(S)$ and $\calA_d^k(S)$ are dual vector spaces.
 Hence, both sets of elements are linearly independent and spans  $\Pois^k_d(S)$ and $\calA_d^k(S)$ respectively.
 \end{proof}

\begin{remark}
As mentioned in~\cite{Manin::Vallette} (see the sketch of the proof of 5.20)
one should expect  the direct implication of the relations~\eqref{eq::OS::cyclic},\eqref{eq::OS::multi} that the set of monomials $\varphi(\shufLie)$ spans the corresponding graded component but we are a bit lazy to go through the combersome combinatorics involved since we can use the Koszulness instead. 
\end{remark}

\section{Hopf derivations of $\Pois_d^k$ and graph complexes}
\label{sec::Def}
We do not recall here the precise definition of the deformation complex of a (Hopf) operad and refer to~\cite{Merkulov::deformation,Willwacher_grt,Fresse} for precise definition. The key point is that in order to define a biderivation (deformation) complex of the map of Hopf operads $\calP\stackrel{f}{\rightarrow}\calQ$ one has to find a fibrant resolution of $\calP$ (that is a quasi-free operad) and a cofibrant replacement of $\calQ$ (that has to be quasifree as an algebra).
Note that we do have found both fibrant and cofibrant replacement of the Hopf operad $\Pois^k_d$.
However, the full deformation complex is huge and the goal of this section is to repeat the arguments of~\cite{Willwacher_oriented} in order to compare the corresponding deformation complexes with the Kontsevich's graph complexes.
Since this is also out of the main strem of this note we do not recall the precise definitions of the graph complexes $\GC_d$, directed graph complexes $\dGC_d$ and oriented graph complexes $\GC_d^{\orient}$ and refer to the papers \cite{Willwacher_grt,Willwacher_oriented}.
The rough idea is that the graph complex consists of connected graphs that do not have external vertices and the differential is given by vertex splitting. As always the dual differential is given by edge contraction. The convention on grading is the same. Vertices has degree $d$ and edges has degree $1-d$. The ordinary graph complex $\GC_d$ consists of  graphs whose edges are not oriented. Respectively the directed graph complex $\dGC_d$ consists of graphs with a chosen orientation of each edge and the oriented graph complex $\GC_d^{\orient}$ is a subset of $\dGC_d$ containing graphs with no directed cycles. Finally, ${\hGCor_d}$ consists of directed graphs with \emph{hairs} and no directed cycles. Where by a \emph{hair} we mean an edge that starts in a vertex of a graph and has no end. The degree of a hair is set to be $0$. The number of loops defines a grading in all aforementioned graph complexes.  
Let ${\dGC_d}^{\odd}$ be the graded Lie subalgebra of the directed graph complex spanned by graphs whose number of loops is divisible by $k$. Note that as a subcomplex ${\dGC_d}^{\odd}$ is a direct summand of $\dGC_d$.
Respectively, let $\GC_d^{\orient,\odd}$ be the corresponding subcomplex of the oriented graph complex.
The standard action of the (oriented) graph complexes on the operad of (oriented) graphs(\cite{Willwacher_grt,Willwacher_oriented}) restricts to the natural action of $\GC_d^{\orient,\odd}$ on the operad $\Graphs^{\orient,\odd}_{d}$ by homotopy derivations as a Hopf operad.
\begin{theorem}
	\label{thm::Deformations}	
	The map 
	\[
	\GC_d^{\orient,\odd}\rightarrow \Def( \Omega({\pdu(\Pois_{d-1}^k)^{!}}) \rightarrow \fGraphs^{k\orient}_{d})[1] \simeq \BiDer^h(\Pois_{d-1}^k)
	\]
	is a quasiisomorphism of complexes. 	
\end{theorem}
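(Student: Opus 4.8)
The strategy is to transport to the $k$-mod oriented setting Willwacher's identification~\cite{Willwacher_oriented} of the homotopy biderivations of the homology of the little discs operad with the oriented graph complex; for $k=2$ this is carried out in~\cite{Khor::Wilw::MonR}, and the plan is to follow those arguments step by step.

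First I would use that the deformation complex of a morphism out of a cofibrant operad is invariant under quasi-isomorphisms of the target: by Theorem~\ref{thm::H:Graph:1} (with $d$ replaced by $d-1$) the quasi-isomorphism $\ho\Pois^k_{d-1}\xrightarrow{\sim}\fGraphs^{k\orient}_d$ of Hopf operads yields $\Def(\Omega(\pdu(\Pois^k_{d-1})^{!})\to\fGraphs^{k\orient}_d)\simeq\BiDer^h(\Pois^k_{d-1})$, the right-hand identification in the statement. To compute this graph-theoretically one uses that each $\fGraphs^{k\orient}_d(n)$ is the \emph{free} graded-commutative algebra on the connected graphs $\ICG^{k\orient}_d(n)$: a homotopy biderivation is then determined by its values on the cogenerators of the cobar source, and compatibility with the cocomposition $\varphi$ --- which splits off an external vertex --- forces these values to be ``local'' at the internal vertices and to behave compatibly with the external legs. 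Exactly as in loc.\,cit., this realises $\BiDer^h(\Pois^k_{d-1})$, up to the degree shift, as a hairy $k$-mod oriented graph complex: connected $k$-mod oriented graphs carrying some external (hair) vertices, with the vertex-splitting differential together with the contribution of the $\LL^k_\infty\{1-d\}$-part of $\ho\Pois^k_{d-1}$. The only genuinely new bookkeeping is keeping track of the valence constraint (every internal vertex has $\equiv 1\pmod k$ outgoing edges, and $\ge k+1$ of them unless it has exactly one, with $\ge 2$ incoming edges in that case) and of the signs coming from the suspension $\Lie^k\{1-d\}$.

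Next I would filter this complex by the number of external/hair vertices (or, equivalently, by loop order) and analyse the associated graded, whose differential is pure vertex splitting. Using $H(\fGraphs^{k\orient}_d)=\Pois^k_{d-1}$ from Theorem~\ref{thm::H:Graph:1} and the concentration of $H(\ICG^{k\orient}_d)$ in a single homological degree per loop order from Lemma~\ref{lem::ICG::trivalent}, one shows that the part of the hairy complex with ``too many'' hairs is acyclic, so that on cohomology everything is supported on closed graphs --- graphs without external vertices. The loop order of a $k$-mod oriented graph is automatically divisible by $k$ (each internal vertex contributes $\equiv 1\pmod k$ outgoing edges, whence $\#\text{edges}\equiv\#\text{internal vertices}\pmod k$), so these closed graphs form precisely the summand $\GC_d^{\orient,\odd}$ of the oriented graph complex. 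Finally, the map in the statement is the action map: a graph of $\GC_d^{\orient,\odd}$ acts on $\fGraphs^{k\orient}_d$ by insertion at internal vertices, and --- again because the inserted graph has loop order divisible by $k$ --- this preserves the $k$-mod conditions and defines a homotopy biderivation of the Hopf operad; the reduction just described identifies it, up to homotopy, with the inclusion of closed graphs, which proves that the action map is a quasi-isomorphism.

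The hard part will be the acyclicity in the third step --- the $k$-mod analogue of Willwacher's vanishing result for the oriented hairy graph complex. In the unconstrained and in the fixed-parity cases one kills hairy graphs by a contracting homotopy that inserts or removes a bivalent vertex on an edge joining two hairs, but bivalent and univalent internal vertices are forbidden here. As in the $k=2$ treatment of~\cite{Khor::Wilw::MonR}, I would expect to redesign the homotopy as a zig-zag that moves a marked edge along a path between two chosen hairs, treating separately internal vertices of out-degree $1$ and of out-degree $\ge k+1$; the real work is in checking that every intermediate graph produced by the homotopy still satisfies the $\pmod k$ valence condition and contains no directed cycle.
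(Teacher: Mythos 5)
Your plan follows the paper's route essentially verbatim: reduce the deformation complex, via the freeness of $\fGraphs^{k\orient}_{d}(n)$ and a filtration whose associated graded differential is the bracket with $\mu_2$, to a hairy $k$-mod oriented graph complex ${\hGCorev_d}$, and then compare that with the closed oriented graph complex; the paper likewise leaves the final acyclicity step at the level of the pictorial map and a reference to \cite{Willwacher_oriented,Khor::Wilw::MonR}. Two small corrections to your description of the last step: there is no ``inclusion of closed graphs'' into the hairy complex (every graph there carries $\equiv 1 \pmod k$ hairs, hence at least one); the comparison map sends a closed graph to the sum over all ways of attaching $kj+1$ hairs, as in \eqref{equ:hairymap}, and it is this map that one proves to be a quasi-isomorphism. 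Relatedly, the closed graphs of $\GC_d^{\orient,\odd}$ satisfy no out-degree congruence --- the divisibility of their loop order by $k$ is forced by the existence of a completion by $kj+1$ hairs (for a closed graph with all out-degrees $\equiv 1\pmod k$ the loop order $E-V+1$ would be $\equiv 1$, not $0$, mod $k$), so the justification you give in parentheses applies to the hairy graphs, not to the closed ones.
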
	
\begin{proof}
	The proof repeats the one presented in~\cite{Willwacher_oriented} and is conducted in several steps:
	
	Note that the Koszul resolution $\Omega({\pdu(\Pois_{d-1}^k)^{!}})\twoheadrightarrow \Pois^k_{d-1}$ contains, in particular, the dg-suboperad $\LL_\infty^k\{1-d\}\subset \Omega({\pdu(\Pois_{d-1}^k)^{!}})$ is generated by operations $\nu_{nk+1}$ that have the same nature as the one in $\ho\Pois^k_d$.
	First, one considers the filtration of the $\Def(\Pois_{d-1}^k\rightarrow \fGraphs^{k\orient}_{d})$ such that the associated graded differential is the operadic commutator with the generator $\mu_2$. The freeness of the algebra $\pdu\fGraphs^{k\orient}_{d}(n)$ implies that the first term of the corresponding spectral sequence can be identified with the graph subcomplex 	 ${\hGCorev_d}\subset \hGCor_d$ spanned by directed connected graphs $\Gamma$ with no oriented cycles, such that in addition the remainder modulo $k$ of  
	\begin{itemize}
		\setlength{\itemsep}{-0.4em}
		\item the number of outgoing edges in each external vertex of a graph $\Gamma$ and 
		\item the full amount of hairs (also called sometimes \emph{outgoing external legs}) 
	\end{itemize}
is equal to $1$. Consequently the loop order of $\Gamma\in {\hGCorev_d}$ is divisible by $k$.

	A graph $\Gamma\in\hGCorev_d$ with $nk+1$ hairs is considered as a homomorphism $\alpha_\Gamma:\ho\Pois_d^k(nk+1) \rightarrow \Graphs^{\orient,\odd}_{d}(nk+1)$ that maps the generator $\nu_{nk+1}$ to the odd graph $\tilde\Gamma$ whose inner part is isomorphic to the inner part of $\Gamma$ and hairs are connected with external vertices such that the external vertices become univalent. Of course, one has to sum up over these possibilities to make this map $\bbS_{nk+1}$-invariant. All other generators of the Koszul resolution of $\Pois_d^k$ are mapped to zero.

	Let us notice, that the differential in ${\hGCorev_d}$ is more complicated then the simple vertex splitting differential and can be defined pictorially in the following way:
	\begin{equation}\label{equ:hGCodifferential}
	\delta \Gamma = \sum_\nu \Gamma \bullet_\nu
	\begin{tikzpicture}[baseline=-.65ex]
	\node[int] (v) at (0,.3) {};
	\node[int] (w) at (0,-.3) {};
	\draw[-triangle 60] (v) edge (w);
	\end{tikzpicture}
	\pm
	\sum_{j\geq 0}
	\frac 1 {(kj+1)!}
	\begin{tikzpicture}[baseline=-.65ex]
	\node[int] (v) at (0,.3) {};
	\node[ext] (w) at (0,-.3) {$\Gamma$};
	\coordinate (w1) at (+.3,-.1);
	\coordinate (w2) at (-.3,-.1);
	\draw[-triangle 60] (v) edge (w) edge (w1) edge (w2);
	\end{tikzpicture}
	\pm
	\sum_{j\geq 0}
	\frac 1 {(kj+1)!}
	\begin{tikzpicture}[baseline=-.65ex]
	\node[ext] (v) at (0,.3) {$\Gamma$};
	\node[int] (w) at (0,-.3) {};
	\coordinate (w1) at (+.3,-.5);
	\coordinate (w2) at (-.3,-.5);
	\coordinate (w3) at (0,-.7);
	\draw[-triangle 60] (v) edge (w) (w) edge (w1) edge (w2) edge (w3);
	\end{tikzpicture}
	\end{equation}
	Here the first sum runs over vertices of $\Gamma$, and the symbol $\bullet_\nu$ shall mean that the graph on the right is inserted at vertex $\nu$. In the second term the black vertex has valence $kj+1$, and it is necessary to sum up over all possibilities to connect the edge pictorially ending in $\Gamma$ to vertices of $\Gamma$. In the last term one sums over all external legs of $\Gamma$ and connects one to the new vertex.

	Second, one defines a quasiisomorphism $\GCorev_d\rightarrow \hGCorev_d$ given by the following pictorial presentation:
	\begin{equation}\label{equ:hairymap}
	\Gamma \mapsto 
	\sum_{j=0}^\infty
	\frac 1 {(kj+1)!}
	\underbrace{
		\begin{tikzpicture}[baseline=-.65ex]
		\node (v) at (0,0) {$\Gamma$};
		\coordinate (v0) at (-.7,-1);
		\coordinate (v1) at (-.3,-1);
		\node (v2) at (.3,-1) {$\dots$};
		\coordinate (v3) at (.7,-1);
		\draw[-triangle 60] (v) edge (v0) edge (v1) edge (v3);  
		\end{tikzpicture}
	}_{kj+1\times}
	\end{equation}
	where the picture on the right means that one should sum over all ways of connecting $kj+1$ outgoing edges to the graph $\Gamma$ such that the resulting graph  belongs to $\hGCorev_d$. 
\end{proof}

\begin{corollary}
	\label{cor::Def::Twopois}
	The Lie algebra of homotopy derivations of the operad $\Pois_{d}^k$ is isomorphic to the Lie subalgebra $\GC_{d}^{\odd}\subset \GC_{d}$ spanned  by graphs with the loop order divisible by $k$.
\end{corollary}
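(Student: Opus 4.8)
The plan is to deduce the corollary from Theorem~\ref{thm::Deformations} together with the comparison between the oriented and the ordinary graph complexes established in~\cite{Willwacher_oriented}. First I would apply Theorem~\ref{thm::Deformations} with $d$ replaced by $d+1$: since $\Pois_{(d+1)-1}^k=\Pois_d^k$ and the map exhibited there is a quasi-isomorphism of dg Lie algebras, one obtains an identification of homotopy (bi)derivation complexes
\[
\BiDer^h(\Pois_d^k)\ \simeq\ \GC_{d+1}^{\orient,\odd},
\]
where $\GC_{d+1}^{\orient,\odd}$ is the subcomplex of the oriented graph complex in dimension $d+1$ spanned by graphs whose loop order (first Betti number) is divisible by $k$. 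It is genuinely a dg Lie subalgebra and a direct summand of $\GC_{d+1}^{\orient}$, because the vertex-insertion bracket adds loop orders while the vertex-splitting differential preserves them. So the problem is reduced to identifying $H(\GC_{d+1}^{\orient,\odd})$ with $H(\GC_d^{\odd})$.

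Next I would invoke the main theorem of~\cite{Willwacher_oriented}: there is a zig-zag of quasi-isomorphisms of dg Lie algebras connecting $\GC_{d+1}^{\orient}$ and $\GC_d$, which on homology is an isomorphism up to a finite set of exceptional ``loop classes'' $L_j$, each represented by a single polygon and hence of loop order $1$. This zig-zag passes through (oriented, hairy) graph complexes, and every map in it merely glues on hairs or redistributes edges among vertices; in particular it strictly preserves the grading by loop order. It therefore restricts to the direct summands spanned by graphs of loop order divisible by $k$, yielding a quasi-isomorphism
\[
\GC_{d+1}^{\orient,\odd}\ \simeq\ \GC_d^{\odd}
\]
modulo the loop classes whose loop order is divisible by $k$. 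Since every $L_j$ has loop order $1$ and we are in the regime $k\ge 2$, no loop class survives in the ``divisible by $k$'' part, so this quasi-isomorphism is honest; the hypothesis $k\ge 2$ is used only for this. Chaining it with the first display gives $\BiDer^h(\Pois_d^k)\simeq\GC_d^{\odd}$, i.e. the homotopy derivations of the Hopf operad $\Pois_d^k$ form precisely the Lie subalgebra $\GC_d^{\odd}\subset\GC_d$ of graphs with loop order divisible by $k$.

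I expect the main obstacle to be the bookkeeping of the middle step: one must verify that the chain of quasi-isomorphisms of~\cite{Willwacher_oriented} relating $\GC_{d+1}^{\orient}$ to $\GC_d$ consists of \emph{dg Lie} maps that are \emph{loop-order preserving}, so that it truly descends to the $k$-divisible direct summands; and one must pin down the precise location of the exceptional classes (loop order $1$), which is exactly what guarantees that no spurious cohomology appears in loop orders divisible by $k$ once $k\ge 2$. The remaining content is a mechanical translation of Theorem~\ref{thm::Deformations} under the shift $d\mapsto d+1$.
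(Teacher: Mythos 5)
Your proposal is correct and follows essentially the same route as the paper: apply Theorem~\ref{thm::Deformations} (shifted so that it computes $\BiDer^h(\Pois_d^k)\simeq \GC_{d+1}^{\orient,\odd}$) and then restrict Willwacher's loop-order-preserving equivalence $\GC_{d+1}^{\orient}\simeq \GC_d\oplus(\text{loop classes})$ to the summand of loop orders divisible by $k$, where the exceptional loop-order-one classes disappear. Your explicit remarks that the zig-zag must preserve loop order and that $k\ge 2$ is what kills the loop classes are exactly the (implicit) content of the paper's two-line argument.
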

\begin{proof}
	The equivalence of oriented graph complex and the nonoriented one of consecutive degrees was discovered in~\cite{Willwacher_oriented}:
	\begin{equation}
	\label{eq::GCor=GC}
	\GC_d^{\orient} \simeq \GC_{d-1} \oplus \text{ loops }
	\end{equation}
	and known to  preserve the loop order grading. The \emph{"loops"} elements has loop order $1$ and we have an isomorphism of subalgebras of graphs of loop order divisible by $k$:
	\[
	\GCorev_d \simeq \GC_{d-1}^{\odd}
	\]
\end{proof}

\appendix
\section{Combinatorial computations with graphs}
\subsection{Computing homology of $\fGraphs^{k\orient}$}
\label{sec::proof::Graphs}
In this section we prove Theorem~\ref{thm::H:Graph:1} which states that the operads of graphs $\fGraphs_{d+1}^{k}$ is equivalent to the operad $\Pois^k_d$.

\begin{proof}
It is easy to see that the combinatorial map $\psi:\ho\Pois_d^k \to \fGraphs_{d+1}^{k\orient}$ is a map of operads.	
Let us define  a natural surjective inverse map of complexes of $\mathbb{S}$-modules $\epsilon:\fGraphs_{d+1}^{k\orient} \to \ho \Pois_{d}^{k}$ which sends all graphs that contain a vertex with more than one incoming edge to zero and what remains is uniquely presented as an operadic tree. 
For example:
\[
\begin{tikzpicture}[scale=0.5]
\node[int] (v) at (-1,1.2) {};
\node[int] (w) at (1,1.2) {};
\node[ext] (v0) at (-3,0) {1};
\node[ext] (v1) at (-2,0) {2};
\node[ext] (v2) at (-1,0) {3};
\node[ext] (v3) at (0,0) {5};
\node[ext] (v4) at (1,0) {4};
\node[ext] (v5) at (2,0) {6};
\node[ext] (v6) at (3,0) {7};
\draw[-triangle 60] (v) edge (v1)  edge (v2) edge (v3);
\draw[-triangle 60] (w) edge (v)  edge (v4) edge (v5) edge (v6) edge[bend right=60] (v0);
\end{tikzpicture}
\quad
\stackrel{\epsilon}{\rightarrow} 
\quad
\nu_{5}(x_1,\nu_3(x_2,x_3,x_5),x_4,x_6,x_7)
\]
The map $\epsilon$ is not a map of operads, however, this is a one-sided inverse to the map of $\mathbb{S}$-modules ${\ho\Pois}_d^{k} \to \fGraphs_{d+1}^{k\orient}$ and in order to finish the proof of Theorem~\ref{thm::H:Graph:1} it is enough to show that the kernel of $\epsilon$ is an acyclic complex.
Let us define a combinatorial filtration on the  kernel of $\epsilon$, such that it is easy to show that the corresponding associated graded differential is acyclic.
\begin{definition}
	\label{def::bad:path}
	For each oriented graph $\Gamma\in \fGraphs_{d+1}^{k\orient}$ containing a vertex with more than one incoming edge we can assign a unique directed path (called  the \emph{ bad path}) yielding the following properties:
	\begin{itemize}
		\setlength\itemsep{-0.4em}	
		\item the bad path starts in a \emph{bad} vertex (= a vertex containing more than one incoming edge);
		\item the bad path ends in an external vertex and this is the lowest possible external vertex with a possible incoming path from a bad vertex;
		\item the bad path contains a unique bad vertex (the source of a path).
	\end{itemize}
\end{definition}
In the example below
we draw the bad path in red and fill the bad vertices in grey:
\[
\begin{tikzpicture}[scale=0.6]
\node[ext] (v0) at (0,0) {1};
\node[ext] (v1) at (1,0) {2};
\node[ext] (v2) at (2,0) {3};
\node[ext,fill=black!40] (v3) at (3,0) {4};
\node[ext] (v4) at (4,0) {5};
\node[ext] (v5) at (5,0) {6};
\node[ext] (v6) at (6,0) {7};
\node[int] (w0) at (-0.3,1.5) {};
\node[int] (w1) at (2,1) {};
\node[int,fill=black!40] (w2) at (1.5,2) {\tiny{$\phantom{\bullet}$}};
\node[int,fill=black!40] (w3) at (4,2) {\tiny{$\phantom{\bullet}$}};
\node[int] (w4) at (5.5,1.5) {};
\draw[-triangle 60] (w0) edge (v0)  edge (w2) edge[bend left=90] (w3);
\draw[-open triangle 60] (w1) edge[double,draw=red] (v1); 
\draw[-triangle 60] (w1) edge (v2) edge (v3);
\draw[-open triangle 60] (w2) edge[double,draw=red] (w1);
\draw[-triangle 60] (w3) edge (w2)  edge (v3) edge (v4);
\draw[-triangle 60] (w4) edge (w3)  edge (v5) edge (v6);
\end{tikzpicture}
\]
The vertex splitting differential may either increase the length of the bad path or increase the number of an external vertex in the target of a path.
Consider a filtration on $co\ker(\epsilon^{\dual})$ by the number of the bad external vertex and consider a grading by the length of the bad path on the associated graded complex.
The associated graded differential preserves the index of the bad external vertex and increases the length of the bad path. 
Moreover, the associated graded differential does not interact with edges that starts not at the bad path.
Consequently, the associated graded complex is a direct sum of complexes $\oplus_{G}C^{\udot}_{G}$. Where the summand $C^{\udot}_{G}$ is spanned by the set of graphs that coincide after contracting the bad path.
Let us show that each of these subcomplexes $C^{\udot}_{G}$ is acyclic.
Consider a particular graph $\Gamma\in C^{\udot}_{G}$.
Let $v$ be an vertex in the bad path of $\Gamma$ then $v$ has the following restriction on the set of incoming and outgoing edges outside of a bad path:
\begin{itemize}
	\setlength\itemsep{-0.4em}		
	\item if $v$ is not the source vertex of a bad path then $v$ has even positive number of outgoing edges (outside of a bad path) and no incoming edges;
	\item if $v$ is the starting vertex of a bad path then $v$ has even number of outgoing (outside of a bad path) edges and at least two incoming edges;
	\item if $v$ is a target external vertex in a bad path then $v$ has  zero number of outgoing edges.
\end{itemize}
Let us order the set $S(\Gamma)$ of edges starting at vertices in the bad path and ending outside of a bad path.
Let $A:=\QQ[e_1,\ldots,e_m]$ be the free commutative (polynomial) algebra generated by the variables indexed by $S(\Gamma)$. Let $A^{(k)}:=\oplus_{n\geq 0}A_{kn}\subset A$ be the $k$-th Segre power consisting of polynomials of degrees divisible by $k$.
The latter is known to be a quadratic Koszul algebra.
Consider the Bar resolution:
\[
B(A^{(k)}):= \bigoplus_{m\geq 0}  A^{(k)}\otimes \underbrace{A^{(k)}_{+}{\otimes}\ldots \otimes A^{(k)}_{+}}_{m}, \ d(a_0\otimes a_1\otimes \ldots \otimes a_m) = \sum_{i=0}^{m-1} (-1)^{i} a_0\otimes \ldots \otimes a_{i} a_{i+1} \otimes \ldots
\]
which is the resolution of a trivial left $A^{(k)}$-module.
We notice that the $\mathbb{S}_m$-invariants of the multilinear part of the bar resolution $B(A^{(k)})$ is dual to  the subcomplexes $C^{\udot}_G$ spanned by graphs which has the same shape as $\Gamma$ after contracting the bad path.
The multilinear part of $B(A^{(k)})$ is acyclic and thanks to Mashke's theorem the subspace of $\mathbb{S}_m$-invariants form an acyclic subcomplex and consequently, $C^{\udot}_G$ is also acyclic.

The case of an absence of outgoing edges outside of a bad path has to be considered separately, but in this case the length of a bad path may be either $0$ or $1$ and it is clear that it has no homology.
\end{proof}

\subsection{Computing homology of $\ICG^{k\orient}$}
\label{sec::proof::ICG}

This section is devoted to give a proof of the key Lemma~\ref{lem::ICG::trivalent}.
The proof is based on a collection of consecutive spectral sequence arguments such that the associated graded differential for each particular spectral sequence is a vertex splitting (edge contraction) that does not brake certain symmetries defined combinatorially in terms of graphs. 
Maschke's theorem is the key argument that helps in this type of computations.
One also should have in mind that any given pair of integers $n\geq 1$ and $l\geq 0$ the number of graphs in $\ICG^{k\orient}(n)$ with a given loop order $l$ is finite. Consequently all spectral sequences we are dealing with converge because all complexes are splitted to the direct sums of finite dimensional ones.

Let us start from certain combinatorial definition that leads to the generalization of Lemma~\ref{lem::ICG::trivalent}:
\begin{definition}
	\label{def::ICG_S}	
	For each subset $S\subset [1n]$ consider the subspace $\ICG_S^{k\orient}(n)\subset \ICG^{k\orient}(n)$ spanned by internally connected graphs $\Gamma$ yielding the two following properties:
	\begin{itemize}
		\item[($\imath$)] for all $s\in S$ the graph $\Gamma$ has a unique edge ending in the external vertex $\extv{s}$ and the source vertex of this edge has more than one outgoing edges:
		\(	\begin{tikzpicture}[scale=0.5]
		\node[int] (v) at (0,1) {};
		\node[ext] (v1) at (-1,0) {\small{$s$}};
		\node (v0) at (0,-0.2) {\small{$\ldots$}};
		\node (v2) at (1,0) {};
		\draw[-triangle 60] (v) edge (v1)  edge (v0) edge (v2);
		\end{tikzpicture}
		\)
		\item[($\imath\imath$)] we do not allow an internal vertex of valency $k+1$ such that with it has no incoming edges and $k$ (of $k+1$) outgoing edges  are connected to the $k$ different external vertices $s_1,\ldots,s_k\in S$:
		\begin{equation}
		\label{pic::bad::vertex}
		\begin{tikzpicture}[scale=0.5]
		\node[int] (v) at (0,1) {};
		\node[ext] (v1) at (-1,0) {\small{$s_1$}};
		\node[ext] (v0) at (0,-0.2) {\small{$s_2$}};
		\node[ext] (v3) at (3,0) {\small{$s_k$}};
		\node (v2) at (1.2,-0.2) {\small{\ldots}};
		\coordinate (v4) at (3,1);
		\draw[-triangle 60] (v) edge (v1)  edge (v0) edge (v3) edge (v4);
		\draw[red] (-2,1.5) -- (4,-0.5);
		\draw[red] (-2,-0.5) -- (4,1.5);
		\end{tikzpicture}.
		\end{equation} 
	\end{itemize}
\end{definition}
\begin{lemma}
	\label{lem::extS::trivalent}	
All non-trivial cohomology classes in $\ICG_S^{k\orient}(n)$ can be represented by linear combinations of internally trees whose all inner vertices either have one output and two inputs or have $k+1$ outgoing edges and no incoming edges. 

In particular, for each given loop degree the homology is concentrated in a unique homological degree.
\end{lemma}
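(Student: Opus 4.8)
The plan is to prove Lemma~\ref{lem::extS::trivalent} by a descending induction on the cardinality of $S$, nested inside an outer induction on the loop order $\deg(\Gamma)=(\#\text{edges}-\#\text{internal vertices})/k$ and on $n$; since for fixed $n$ and fixed loop order the complex $\ICG_S^{k\orient}(n)$ is finite-dimensional, every spectral sequence used below converges. Note first that for $S=\emptyset$ the conditions~($\imath$),~($\imath\imath$) of Definition~\ref{def::ICG_S} are vacuous, so $\ICG_\emptyset^{k\orient}(n)=\ICG^{k\orient}(n)$ and the lemma specialises to Lemma~\ref{lem::ICG::trivalent}; it therefore suffices to treat general $S$. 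The base of the induction is $S=[1n]$, where conditions~($\imath$) and~($\imath\imath$) are restrictive enough that the surviving graphs can be analysed by hand and, after splitting off one external vertex and invoking the induction hypothesis for smaller $n$, the cohomology is computed on the nose. For the inductive step I would fix an index $j\in[1n]\setminus S$ and compare $\ICG_S^{k\orient}(n)$ with $\ICG_{S\cup\{j\}}^{k\orient}(n)$, for which the assertion is already known.

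For this comparison I would put on $\ICG_S^{k\orient}(n)$ a combinatorial filtration adapted to the local picture around the external vertex $j$, recording the number of edges that end in $j$ together with the number of internal vertices that are ``bad at $j$'' in the sense of~\eqref{pic::bad::vertex}. The associated graded differential is then exactly the part of the vertex-splitting differential that only rearranges the edges incident to $j$ and to the internal vertices feeding it; crucially it leaves untouched every edge disjoint from this neighbourhood. Consequently the associated graded complex splits as a direct sum $\bigoplus_G C^{\udot}_G$ indexed by ``shapes'' $G$ obtained by contracting that neighbourhood to a single external vertex, in complete analogy with the proof of Theorem~\ref{thm::H:Graph:1}.

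Each summand $C^{\udot}_G$ I would identify, after dualising, with the $\mathbb{S}_m$-invariant part of the multilinear component of the bar resolution $B(A^{(k)})$ of the $k$-th Segre power $A^{(k)}=\bigoplus_{n\ge 0}A_{kn}$ of the polynomial algebra $A=\QQ[e_1,\dots,e_m]$, the variables $e_i$ being indexed by the edges leaving the internal vertices of the neighbourhood; the congruence ``the number of outgoing edges at an internal vertex is $\equiv 1\pmod k$'' is precisely what forces the Segre power rather than the full polynomial algebra. Since $A^{(k)}$ is quadratic Koszul, $B(A^{(k)})$ is acyclic in positive degrees, its multilinear part is acyclic, and by Maschke's theorem the $\mathbb{S}_m$-invariants remain acyclic; hence every summand with $m>0$ dies, and the first page of the spectral sequence is concentrated on the sub-quotient on which the neighbourhood of $j$ already meets~($\imath$) and~($\imath\imath$), namely $\ICG_{S\cup\{j\}}^{k\orient}(n)$. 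Thus $H(\ICG_S^{k\orient}(n))\cong H(\ICG_{S\cup\{j\}}^{k\orient}(n))$; descending to $S=[1n]$ and climbing back yields the representability of all cohomology classes by internally-tree graphs whose inner vertices are trivalent or are $(k+1)$-valent sources. The homological concentration then follows, since for such a tree the loop degree already fixes the number of $(k+1)$-valent sources and of trivalent vertices, hence the total homological degree. The degenerate case in which the neighbourhood of $j$ carries no outgoing edges leaving it is handled separately, but there, as in the proof of Theorem~\ref{thm::H:Graph:1}, the relevant bad path has length $0$ or $1$ and contributes nothing.

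The main obstacle I anticipate is bookkeeping rather than conceptual. One must pin down the filtration parameter at $j$ so that the associated graded differential is literally the bar differential of $A^{(k)}$ — this amounts to declaring precisely which edges of a graph are ``frozen'' and checking that no admissible vertex splitting can ever couple the chosen neighbourhood of $j$ to the rest of the graph. One must also check that condition~($\imath\imath$), which constrains size-$k$ subsets of $S$, behaves correctly when the single index $j$ is adjoined, so that the $\mathbb{S}_m$-symmetry underlying the Maschke step is genuinely present. Finally the base case $S=[1n]$ deserves a careful treatment: one has to reconcile it with the fact that the naive one-vertex tree representative is itself excluded by~($\imath\imath$), so that the unique surviving homological degree is correctly located.
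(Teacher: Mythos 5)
Your central reduction is not correct: you claim that the spectral sequence attached to your local filtration at $j$ collapses onto $\ICG_{S\cup\{j\}}^{k\orient}(n)$, so that $H(\ICG_S^{k\orient}(n))\cong H(\ICG_{S\cup\{j\}}^{k\orient}(n))$. Iterating this down to $S=[1n]$ would force all these homologies to vanish, because for $S=[1n]$ the complex is identically zero (condition~($\imath$) at every external vertex lets one prolong a directed path indefinitely through internal vertices, contradicting the absence of directed cycles — this, not a hands-on computation, is the actual base case). That conclusion contradicts Theorem~\ref{thm::ICG::Koszul}, which identifies $H(\ICG^{k\orient}(n))=H(\ICG_\emptyset^{k\orient}(n))$ with $\tkd^k_d(n)\neq 0$. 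The point is that passing from $S$ to $S\cup\{m\}$ genuinely changes the homology: new classes appear, and the argument must account for them. In the paper this happens in two places. First, one filters by the number of internally connected components of $\Gamma\setminus T_m^{\orient}(\Gamma)$, where $T_m^{\orient}$ is the maximal directed tree hanging into the chosen external vertex $\extv{m}$; the associated graded has the shape $\LL_\infty\circ\bigl(\calF^1/\calF^{>1}\bigr)$, and the Chevalley--Eilenberg/Koszulness-of-$\Lie$ step applied to this composition is exactly what produces the binary internal vertices (one output, two inputs) in the tree representatives — your argument has no mechanism generating these. Second, after splitting the one-component part by the size of $T_m^{\orient}$ and contracting the edge $\oneedge{v}{m}$, the complement of $\ICG_{S\sqcup\{m\}}^{k\orient}(n)$ inside the kernel of that homotopy is acyclic \emph{except} in one degenerate configuration, which contributes the images of the generators $t_{t_1,\ldots,t_{k-1},t,m}$ under $\psi$; these surviving classes are precisely what your claimed isomorphism erases.

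A secondary problem is that the Segre-power/bar-resolution/Maschke argument you invoke is the one the paper uses in the proof of Theorem~\ref{thm::H:Graph:1} to show that the kernel of the projection $\fGraphs_{d+1}^{k\orient}\to\ho\Pois^k_d$ is acyclic (there the relevant local structure really is a ``bad path'' whose splittings reproduce the bar differential of $A^{(k)}$). Around a single external vertex $j$ in $\ICG_S^{k\orient}(n)$ the vertex-splitting differential does couple the neighbourhood of $j$ to the rest of the graph (splitting a vertex feeding $j$ redistributes its other incident edges arbitrarily), so the associated graded of your filtration is not the bar complex you describe, and the acyclicity you deduce from it is exactly the overstatement that leads to the false isomorphism above. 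To repair the proof you would need to replace the local-at-$j$ filtration by the global tree $T_m^{\orient}$ filtration (or something equivalent) and track the non-acyclic exceptional summands rather than discarding them.
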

Note that Lemma~\ref{lem::ICG::trivalent} is a particular case of Lemma~\ref{lem::extS::trivalent}  when $S=\emptyset$.
\begin{proof}
	The proof is the simultaneous (increasing) induction on the number $n$ of externally connected vertices, (decreasing) induction on the cardinality of the subset $S$ and (increasing) induction on the loop grading of a graph.
	
	For \emph{the base of induction} ($|S|=n$) we notice that there are no oriented graphs (with no oriented loops) in $\ICGS{[1n]}$. Indeed, each internal vertex of a graph $\Gamma\in \ICG^{k\orient}$ has at least one outgoing edge and if, in addition, each external vertex of $\Gamma$ is connected by a unique edge and the source of this edge has at least one edge ending in an internal vertex then one can construct a directed path of arbitrary length that avoids external vertices. However, the number of vertices is finite and hence this path contains a loop what is not allowed in $\ICG^{k\orient}$.
	
	\emph{(Induction step).}
	Suppose $m\in [1n]\setminus S$ and thanks to the induction on $n$ we may suppose that we are dealing with the ideal of graphs connected with $\extv{m}$. For each graph $\Gamma\in \ICGS{S}$ we can define the maximal subtree $T_m:=T_m(\Gamma)\subset \Gamma$ whose vertices are defined by the following property:
	
	\emph{ $w\in T_m$\ $\stackrel{def}{\Leftrightarrow}$\ There exists a unique (nondirected) path (with no selfintersections) that starts in $w$ and ends in $\extv{m}$.}
	
	Let $T_m^{\orient}$ be the subtree of $T_m$ spanned by those vertices $v$ of $T_m$ yielding the conditions: 
	\begin{itemize}
		\setlength{\itemsep}{0.4em}	
		\item[$(a1)$] the unique path starting at $v$ and ending in $\extv{m}$ is a directed path;
		\item[$(a2)$] if the vertex $v$ is internal (differs from $\extv{m}$) then it has a unique outgoing edge.
	\end{itemize}
	Let $\Gamma_1$,\ldots,$\Gamma_k$ be the set of internally connected components of the complementary graph $\Gamma\setminus T_m^{\orient}$ (internal vertices of $T_m^{\orient}$ are considered as external vertices of the complementary graph $\Gamma\setminus T_m^{\orient}$). Moreover, $\forall 1\leq i\leq k$ there exists exactly one vertex $v_i\in T_m^{\orient}$ such that there exists not less than one edge ending in $v_i$ that belongs to $\Gamma_i$. The uniqueness of $v_i$ is goverened by definition of $T_m\supset T_m^{\orient}$.  
	In Picture~\eqref{pic::tree::orient} we consider an example of a graph $\Gamma$.  The complementary graph $\Gamma\setminus T_m^{\orient}$ has three connected component drawn in different collors (green, yellow and blue).
	We draw the tree $T_m^{\orient}$ together with the incoming edges that correspond to connected components that is designed by corresponding colloring:
	\begin{equation}
	\label{pic::tree::orient}
	\begin{array}{ccc}
	\Gamma  & \Gamma\setminus T_m^{\orient}(\Gamma) & T_m^{\orient}(\Gamma)
	\\
	{
		\begin{tikzpicture}[scale=0.8]
		\node[int,red] (w1) at (0,1) {};
		\node[int,red] (w2) at (-0.7,1) {};
		\node[int] (w3) at (-2,1.3) {};
		\node[int] (w4) at (-0.5,2.2) {};
		\node[int] (w5) at (1,2.5) {}; 
		\node[int] (w6) at (-1.5,1.7) {};
		\node[ext] (v0) at (2,0) {\small{p}};
		\node[ext] (v1) at (-2,0) {i};
		\node[ext] (v2) at (-1,0) {j};
		\node[ext,red] (v3) at (0,0) {\small{m}};
		\node[ext] (v4) at (1,0) {\small{l}};
		\draw[-triangle 60] (w1) edge[red] (v3);
		\draw[-triangle 60] (w2) edge[red] (w1);
		\draw[-triangle 60] (w3) edge (w2);
		\draw[-triangle 60] (w3) edge (v1) edge (v2);
		\draw[-triangle 60] (w4) edge (w5);
		\draw[-triangle 60] (w4) edge[bend right=90] (v1);
		\draw[-triangle 60] (w4) edge (w1);
		\draw[-triangle 60] (w5) edge (w1);
		\draw[-triangle 60] (w5) edge (v4);
		\draw[-triangle 60] (w5) edge (v0);
		\draw[-triangle 60] (w6) edge (v1) edge (v2) edge (w2);
		\end{tikzpicture}
	}
	&{
		\begin{tikzpicture}[scale=0.8]
		\node[ext] (w1) at (0,1) {\tiny{$m_2$}};
		\node[ext] (w2) at (-0.7,1) {\tiny{$m_1$}};
		\node[int,green] (w3) at (-2,1.3) {};
		\node[int,blue] (w4) at (-0.5,2.2) {};
		\node[int,blue] (w5) at (1,2.5) {}; 
		\node[int,yellow] (w6) at (-1.5,1.7) {};
		\node[ext] (v0) at (2,0) {\small{p}};
		\node[ext] (v1) at (-2,0) {i};
		\node[ext] (v2) at (-1,0) {j};
		\node[ext] (v3) at (0,0) {\tiny{$m_3$}};
		\node[ext] (v4) at (1,0) {\small{l}};
		\draw[-triangle 60,green] (w3) edge (w2);
		\draw[-triangle 60,green] (w3) edge (v1) edge (v2);
		\draw[-triangle 60,blue] (w4) edge (w5);
		\draw[-triangle 60,blue] (w4) edge[bend right=90] (v1);
		\draw[-triangle 60,blue] (w4) edge (w1);
		\draw[-triangle 60,blue] (w5) edge (w1);
		\draw[-triangle 60,blue] (w5) edge (v4);
		\draw[-triangle 60,blue] (w5) edge (v0);
		\draw[-triangle 60,yellow] (w6) edge (v1) edge (v2) edge (w2);
		\end{tikzpicture}
	}
	&
	{
		\begin{tikzpicture}[scale=0.8]
		\node[int,red] (w1) at (0,1) {};
		\node[int,red] (w2) at (-1,1) {};
		\coordinate (w3) at (-2,1.3);
		\coordinate (w6) at (-1.5,1.7);
		\coordinate (w5) at (0.3,2); 
		\coordinate (v3) at (0,0.5);
		\draw (w1) edge[-triangle 60,red] (v3);
		\draw (w2) edge[-triangle 60,red] (w1);
		\draw (w6) edge[-triangle 60,yellow] (w2);
		\draw (w3) edge[-triangle 60,green] (w2);
		\draw (w5) edge[-triangle 60,blue] (w1);
		\end{tikzpicture}
	}
	\end{array}
	\end{equation}
	
	Consider the filtration $\calF^{p}$ of $\ICGS{S}$ by the number $p$ of internally connected components of the graph $\Gamma\setminus T_m^{\orient}(\Gamma)$. While ordering the corresponding graded components we ends up with the following isomorphism of symmetric collections:
	\[
\gr^{\calF}\ICG_{S}^{k\orient} \simeq \LL_{\infty} \circ \left(\calF^{1}/\calF^{>1}(\ICG_{S}^{k\orient})\right)	\]
Thanks to the Koszulness of the operad $\Lie$ it remains to explain that the quotient complex 	
$\ICGS{S}^1:=\calF^{1}/\calF^{>1}(\ICG_{S}^{k\orient})$ 	spanned by graphs $\Gamma\in\ICGS{S}$ with unique internally connected component $\Gamma_1$ of $\Gamma\setminus T_m^{\orient}(\Gamma)$ has appropriate cohomology.
	
	Note that the graph $\Gamma\setminus T_m^{\orient}(\Gamma)$ may have a unique internally connected component if and only if $T_m^{\orient}(\Gamma)$ either is equal to $\extv{m}$ or consists of one edge $\oneedge{v}{m}$.  
	Therefore, the complex $\ICGS{S}^1$ admits a decomposition $\ICGS{S}^1 = \ICGS{S}^1_{0} \oplus \ICGS{S}^1_{1}$ where the additional rightmost lower index corresponds to the number of edges in $T_m^{\orient}$. 
	Consider the homotopy $h:\ICGS{S}^1_{1}\twoheadrightarrow \ICGS{S}^1_{0}$ to the  first differential in the corresponding spectral sequence given by contraction of the edge $\oneedge{v}{m}$ if allowed. 
	The kernel of the surjective homotopy $h$ is spanned by graphs having more than one outgoing edge from the unique internal vertex $v\in T_m^{\orient}$. In particular, the complex $\ICGS{{S\sqcup\{m\}}}$ is a direct summand of the kernel of $h$ and can be reached by the decreasing induction on the cardinality of $S$.
	
	Thus, it remains to show that the complement of $\ICGS{{S\sqcup\{m\}}}$ in the kernel of $h$  (which we denote by $K$) is acyclic.
	Note that the complement $K$ consists of graphs that has the forbidden vertex~\eqref{pic::bad::vertex} connected with certain external vertices $t_1,\ldots,t_{k-1}\in S$ and $m$. Consider the filtration by the lexicographical order of the subset $T$ of first $k-1$ external vertices adjacent to $m$ (via exactly one internal vertex $v$). 
	 The associated graded complex $K_T$ admits additional twostep filtration $K_T=K_T^{k+1}\oplus K_T^{>k+1}$, where $K_T^{k+1}$ is spanned by graphs with the vertex $v$ of valency $k+1$. Note that in the latter case the vertex $v$ has only outgoing edges connected with $T$,$m$ and the remaining part of the graph.
	Consider the homotopy $h'$ to the associated graded differential given by contraction of the unique edge connecting $v$ and the remaining part of the graph:
	\begin{align*}
	h': \quad & K_{T}^{k+1} \phantom{.....................} \rightarrow \phantom{...................}  K_{T}^{>k+1} \\
	 \quad &
	\begin{tikzpicture}[scale =0.5]
	\node (w) at (-1,1.3) {\small{$v$}};
	\node[int] (w1) at (-0.5,1.3) {};
	\node[int] (w2) at (1,1) {};
	\node[ext] (v0) at (-3,0) {\tiny{$t_{k-1}$}};
	\node (v) at (-2,0) {\small{\ldots}};
	\node[ext] (v1) at (-1,0) {\tiny{$t_{1}$}};
	\node[ext] (v2) at (0,0) {\small{m}};
	\coordinate (u1) at (0,2.5);
	\coordinate (u2) at (1.5,2.5);
	\coordinate (u3) at (1,0.2);
	\draw[-triangle 60] (w1) edge (v0) edge (v1) edge (v2) edge[red] (w2);
	\draw (w2) edge[-triangle 60] (u3);
	\draw (u1) edge[-triangle 60] (w2);
	\draw (u2) edge[-triangle 60] (w2);
	\end{tikzpicture} 
	\quad \mapsto \quad 
	\begin{tikzpicture}[scale =0.5]
	\node (w) at (-0.4,1.1) {\small{$v$}};
	\node[int] (w1) at (0,1.3) {};
	\node[ext] (v0) at (-3,0) {\tiny{$t_{k-1}$}};
\node (v) at (-2,0) {\small{\ldots}};
\node[ext] (v1) at (-1,0) {\tiny{$t_{1}$}};
	\node[ext] (v2) at (0,0) {\small{m}};
	\coordinate (u1) at (-1,2.5);
	\coordinate (u2) at (0.5,2.5);
	\coordinate (u3) at (1,0.2);
	\draw[-triangle 60] (w1) edge (v0) edge (v1) edge (v2);
	\draw (w1) edge[-triangle 60] (u3);
	\draw (u1) edge[-triangle 60] (w1);
	\draw (u2) edge[-triangle 60] (w1);
	\end{tikzpicture}
	\end{align*}
	The homotopy $h'$ defines a bijection between graphs spanning $K_T^{k+1}$ and $K_T^{>k+1}$ except one particular case when the remaining part of the graph consists of one external vertex $\extv{t}$ and if $S=T$ or $S=T\sqcup\{t\}$ the cohomology of $K_T$ is spanned by the image of the generator $t_{t_1,\ldots,t_{k-1},t,m}$ with respect to the map $\psi$ defined  in~\eqref{eq::HICG::generators}.
As promised we finished with the increasing induction on the cardinality of the subset $S$ of external vertices. \end{proof}

\end{document}